\newtheorem{theorem}{Theorem}[section]
\newtheorem{lemma}[theorem]{Lemma}
\newtheorem{proposition}[theorem]{Proposition}
\newtheorem{corollary}[theorem]{Corollary}
\theoremstyle{definition}
\newtheorem{definition}[theorem]{Definition}
\newtheorem{example}[theorem]{Example}
\newtheorem{remark}[theorem]{Remark}
\newtheoremstyle{TheoremRef}
        {8.0pt plus 2.0pt minus 4.0pt}
        {8.0pt plus 2.0pt minus 4.0pt}  
        {\itshape}                      
        {}                              
        {\bfseries}                     
        {.}                             
        {5pt}                             
        {\thmname{#1}\thmnote{ \bfseries #3}}
\theoremstyle{TheoremRef}
\newtheorem{theorem_ref}{Theorem}[section]
\numberwithin{equation}{section}
\newcommand{\CC}{\mathbb C}
\newcommand{\HH}{\mathbb H}
\newcommand{\NN}{\mathbb N}
\newcommand{\cD}{\mathcal D}
\newcommand{\cA}{\mathcal A}
\newcommand{\PP}{\mathbb P}
\newcommand{\QQ}{\mathbb Q}
\newcommand{\RR}{\mathbb R}
\newcommand{\ZZ}{\mathbb Z}
\newcommand{\Mp}{\mathop{\mathrm {Mp}}\nolimits}
\newcommand{\SL}{\mathop{\mathrm {SL}}\nolimits}
\newcommand{\SO}{\mathop{\mathrm {SO}}\nolimits}
\newcommand{\Orth}{\mathop{\null\mathrm {O}}\nolimits}
\newcommand{\im}{\mathop{\mathrm {Im}}\nolimits}
\newcommand{\rank}{\mathop{\mathrm {rk}}\nolimits}
\newcommand{\latt}[1]{{\langle{#1}\rangle}}
\newcommand{\II}{\mathop{\mathrm {II}}\nolimits}
\def\Grit{\mathbf{G}}
\def\Borch{\mathbf{B}}
\def\m{\operatorname{mod}}
\def\det{\operatorname{det}}
\newenvironment{psmallmatrix}
  {\left(\begin{smallmatrix}}
{\end{smallmatrix}\right)}
\begin{document}

\title[Fake monster algebra and singular Borcherds products]{The fake monster algebra and singular Borcherds products}

\author{Haowu Wang}

\address{Center for Geometry and Physics, Institute for Basic Science (IBS), Pohang 37673, Korea}

\email{haowu.wangmath@gmail.com}

\author{Brandon Williams}

\address{Lehrstuhl A für Mathematik, RWTH Aachen, 52056 Aachen, Germany}

\email{brandon.williams@matha.rwth-aachen.de}

\subjclass[2020]{11F22, 11F27, 11F50, 11F55}

\date{\today}

\keywords{Borcherds products of singular weight, modular forms for the Weil representation, Jacobi forms, Fake monster algebra, Conway's group, Leech lattice, twisted denominator identities}

\begin{abstract} 
In this paper we consider several problems in the theory of automorphic products and generalized Kac--Moody algebras proposed by Borcherds in 1995. We show that the denominator of the fake monster algebra defines the unique holomorphic Borcherds product of singular weight on a maximal lattice. We give a full classification of symmetric holomorphic Borcherds products of singular weight on lattices of prime level. Finally we prove that all twisted denominator identities of the fake monster algebra arise as the Fourier expansions of Borcherds products of singular weight at a certain cusp. The proofs rely on an identification between modular forms for the Weil representation attached to lattices of type $U(N)\oplus U \oplus L$ and certain tuples of Jacobi forms of level $N$.
\end{abstract}

\maketitle

\begin{small}
\tableofcontents
\end{small}

\section{Introduction}
Borcherds defined generalized Kac--Moody algebras \cite{Bor88} in 1988 and applied this concept to his celebrated proof of the monstrous moonshine conjecture \cite{Bor92} in 1992. Later, he observed that the denominator functions of some generalized Kac--Moody algebras are modular forms on orthogonal groups $\Orth(l,2)$. In 1995 and 1998 Borcherds developed the theory of automorphic products to construct orthogonal modular forms as infinite products \cite{Bor95, Bor98}. Let $M$ be an even lattice of signature $(l,2)$ with $l\geq 1$ and $\rho_M$ be the Weil representation of $\SL_2(\ZZ)$ attached to the discriminant form of $M$. Given a weakly holomorphic modular form $F$ of weight $1-l/2$ for $\rho_M$ with integral Fourier expansion. By Borcherds' theory, the singular theta lift of $F$ gives a meromorphic modular form $\Borch(F)$ of weight $c_0(0)/2$ on a certain subgroup of $\Orth(M)$, which has nice product expansions at any cusps and whose divisor is a linear combination of rational quadratic divisors, where $c_0(0)$ is the constant coefficient in the Fourier expansion of $F$. Such $\Borch(F)$ is called a Borcherds product on $M$. 

When $l\geq 3$, the weight of a non-constant holomorphic modular form on $\Orth(l,2)$ is at least $l/2-1$. The smallest possible positive weight $l/2-1$ is called the \textit{singular} weight. Modular forms of singular weight are particularly interesting because their Fourier coefficients are supported only on isotropic vectors. Holomorphic Borcherds products of singular weight are very exceptional objects. It is expected that generalized Kac--Moody algebras whose denominator identities are singular Borcherds products have natural constructions (i.e. not simply listing generators and relations) and interesting symmetry groups (\cite{Bor90, Sch00, HS03, CKS07, HS14, Car16, Mol21}). For example, the physical states of a bosonic string moving on a $26$-dimensional torus define the fake monster algebra denoted $G$ \cite{Bor90}. The fake monster algebra has root lattice $\II_{25,1}$ and its denominator identity is
\begin{equation}\label{eq:intro-denom}
e^{\rho} \prod_{\alpha \in \II_{25,1}^+} \Big(1-e^\alpha\Big)^{[\frac{1}{\Delta}](-\alpha^2/2)} = \sum_{w \in W} \det(w) w\left( e^{\rho}\prod_{n=1}^\infty\Big( 1 - e^{n\rho}\Big)^{24} \right),    
\end{equation}
which defines a holomorphic Borcherds product of singular weight $12$ on $\Orth^+(\II_{26,2})$ which vanishes precisely on rational quadratic divisors orthogonal to 2-roots of $\II_{26,2}$. This is the so-called Borcherds' form denoted by $\Phi_{12}$. 

Equation \eqref{eq:intro-denom} comes from a cohomological identity. The orthogonal group of the Leech lattice $\Lambda$ is the Conway group $\mathrm{Co}_0$, and a certain extension of $\mathrm{Co}_0$ acts naturally on the fake monster algebra $G$. By considering the action of any $g \in \mathrm{Co}_0$ of cycle shape $\prod k^{b_k}$ on $G$ and taking traces, Borcherds \cite{Bor92} established the twisted denominator identities of the fake monster algebra,
\begin{equation}
e^\rho \prod_{\alpha \in L_g^+}\Big(1-e^\alpha\Big)^{\mathrm{mult}(\alpha)} = \sum_{w\in W_g} \det(w) w\left( e^\rho \prod_{k=1}^\infty \prod_{n=1}^\infty \Big(1-e^{kn\rho}\Big)^{b_k} \right)    
\end{equation}
and proved that each of them is the usual denominator identity of a generalized Kac--Moody superalgebra, where $L_g=U\oplus \Lambda^g$, $U$ is the unique even unimodular lattice of signature $(1,1)$, $\Lambda^g$ is the fixed-point sublattice of $\Lambda$ associated with $g$, and $\mathrm{mult}(\alpha)$ are defined in terms of traces of powers of $g$ acting on $G$.

In 1995 Borcherds proposed the following problems:
\begin{enumerate}
    \item Classify Borcherds products of singular weight and the associated generalized Kac--Moody algebras (\cite[\S 17, Problem 3]{Bor95} and \cite[Problem 16.2]{Bor98});
    \item Prove that the twisted denominator identities define automorphic forms of singular weight for some orthogonal groups (\cite[\S 15, Example 3]{Bor95}).
\end{enumerate}

Holomorphic Borcherds products of singular weight are closely related to reflective modular forms introduced by Borcherds \cite{Bor98} and Gritsenko--Nikulin \cite{GN98}. A non-constant modular form on $M$ is called \textit{reflective} if its zeros lie on rational quadratic divisors orthogonal to roots of $M$. Reflective modular forms of arbitrary weight have important applications to hyperbolic reflection groups \cite{Bor00}, moduli spaces \cite{GHS07, Ma17, Gri18}, infinite dimensional Lie algebras \cite{GN98, GN02, Sch06, GN18} and the classification and construction of free algebras of modular forms \cite{Wan21a}. Recently, we \cite{WW23} proved that every singular Borcherds product on a lattice $M$ arises as a reflective modular form on a certain even lattice contained in $M\otimes\QQ$ (may be different from $M$). This result implies that a complete classification of reflective modular forms leads to a satisfactory solution to problem (1).  Over the past two decades, many classifications of reflective modular forms have been obtained \cite{GN02, Bar03, Sch06, Sch17, Ma17, Ma18, Dit19, Wan21, Wan19, Wan22, Wan23a, Wan23b}, but no full classification is yet available. 

In this paper we prove several new results related to problem (1) and present a complete solution to problem (2). 

We classify singular Borcherds products in certain families of lattices without a priori that these products are reflective.   There are only a few known results in this direction. In \cite{DHS15} Dittmann, Hagemeier and Schwagenscheidt classified simple lattices (i.e. the lattices on which there is no obstruction to construct Borcherds products) of square-free level and the singular Borcherds products on them. Later, Opitz and Schwagenscheidt \cite{OS19} classified singular Borcherds products on simple lattices of arbitrary level. There are only finitely many simple lattices and a full classification was given in \cite{BEF16}. Note that \cite{DHS15, OS19} only classified singular Borcherds products coming from weakly holomorphic modular forms whose principal parts are non-negative; there are known examples of singular Borcherds products for which this does not hold (cf. \cite{GN98}). The first classification of singular Borcherds products on infinite families of lattices was achieved by Scheithauer \cite{Sch17}. Scheithauer proved that the Borcherds form $\Phi_{12}$ is the unique holomorphic Borcherds product of singular weight on unimodular lattices. He also showed that singular Borcherds products on lattices of prime level exist only in small signatures and gave an explicit bound. However, Scheithauer's bound depends on the discriminant form, so it does not lead to a full classification. 

In this paper we improve on Scheithauer's results. We first classify all singular Borcherds products on maximal lattices. If $\Borch(F)$ is a singular Borcherds product on a maximal lattice $M$ then we will show that $\Borch(F)$ is non-vanishing at a 1-dimensional cusp related to a decomposition $M=2U\oplus L$. Gritsenko and Nikulin determined the Fourier--Jacobi expansions of Borcherds products on lattices of type $2U\oplus L$ (see \cite{GN98, Gri18}). Their result implies that the zeroth Fourier--Jacobi coefficient of $\Borch(F)$ has to be $\Delta=\eta^{24}$. We use this to prove:

\begin{theorem_ref}[\ref{th:maximal}]
The Borcherds form $\Phi_{12}$ is the unique holomorphic Borcherds product of singular weight on a maximal lattice. 
\end{theorem_ref}

We extend the argument in the proof of Theorem \ref{th:maximal} to classify symmetric Borcherds products of singular weight on lattices of prime level. Following \cite{Sch06} a Borcherds product on $M$ is called \textit{symmetric} if it is modular for the full orthogonal group $\Orth^+(M)$. This condition is necessary to exclude some infinite families of pullbacks of singular Borcherds products.  

\begin{theorem_ref}[\ref{th:prime-level}]
Besides the infinite family of rescaled lattices $\II_{26,2}(p)$ for primes $p$, there are exactly $12$ lattices of prime level which admit a symmetric holomorphic Borcherds product of singular weight. For each such lattice, the singular Borcherds product is unique. The genera of the $12$ lattices are as follows
\begin{align*}
    &\II_{18,2}(2_{\II}^{+10})& &\II_{10,2}(2_{\II}^{+2})& &\II_{10,2}(2_{\II}^{+10})& &\II_{14,2}(3^{-8})& &\II_{8,2}(3^{-3})& &\II_{8,2}(3^{-7})&\\
    &\II_{10,2}(5^{+6})& &\II_{6,2}(5^{+3})& &\II_{6,2}(5^{+5})& &\II_{8,2}(7^{-5})& &\II_{6,2}(11^{-4})& &\II_{4,2}(23^{-3}).&
\end{align*}
\end{theorem_ref}

Note that $\Phi_{12}$ defines the unique symmetric Borcherds product of singular weight on $\II_{26,2}(p)$ for any prime $p$. 
Scheithauer \cite{Sch06}  proved that the above $12$ lattices are exactly the lattices of prime level which have a symmetric reflective Borcherds product of singular weight whose input is also reflective. The $12$ singular products can be identified with the twisted denominator functions of the fake monster algebra corresponding to elements of $\mathrm{Co}_0$ of prime level and non-trivial fixed-point sublattice. 

The proof of Theorem \ref{th:prime-level} relies on a description of the Fourier--Jacobi expansion of a Borcherds product on lattices of type $U(N)\oplus U \oplus L$ (see Theorem \ref{th:FJ-level-N}) and an identification of the input in Borcherds' lift with certain sequences of weakly holomorphic Jacobi forms (see Theorem \ref{th:iso} below). Let $F$ be a symmetric singular Borcherds product on a lattice $M$ of prime level $p$. Assume that $M$ is not of type $\II_{26,2}(p)$. We will show that $F$ does not vanish at a $1$-dimensional cusp related to a decomposition $M=U(p)\oplus U\oplus L$ and the corresponding zeroth Fourier--Jacobi coefficient is an eta quotient. We then prove Theorem \ref{th:prime-level} by analyzing the input of $F$ as a pair of Jacobi forms. 

\begin{theorem_ref}[\ref{th:iso}]
Let $M=U(N)\oplus U\oplus L$.
There is an isomorphism between the space of weakly holomorphic modular forms of weight $k-\frac{1}{2}\rank(L)$ for $\rho_M$ and the space of sequences of Jacobi forms
$$
\bigoplus_{d|N} J_{k,L}^!(\Gamma_1(N/d)),
$$
where $J_{k,L}^!(\Gamma_1(t))$ is the space of weakly holomorphic Jacobi forms of weight $k$ and index $L$ on $\Gamma_1(t)$. 
\end{theorem_ref}

This is a generalization of the classical isomorphism between modular forms for $\rho_L$ and Jacobi forms of index $L$ on $\SL_2(\ZZ)$ (see e.g. \cite{Gri94}). The construction is motivated by \cite{Bor95, Car12} in which Borcherds and Carnahan realized modular forms for $\rho_{U(N)}$ in terms of scalar-valued modular forms on $\Gamma_1(t)$ for $t|N$. 

We use the above isomorphism to resolve Borcherds' problem (2). Scheithauer made significant contributions to this problem. First, in \cite{Sch04, Sch06} he formulated Borcherds' problem precisely as the moonshine conjecture for Conway's group. This claims that the twisted denominator identity of the fake monster algebra corresponding to $g\in \mathrm{Co}_0$ defines a Borcherds product of weight $\frac{1}{2}\rank(\Lambda^g)$ on some orthogonal group of signature $(2+\rank(\Lambda^g),2)$. Then he proved this conjecture for all Conway's elements of square-free level and eight additional elements \cite{Sch01, Sch04, Sch08, Sch09, Sch15, Sch17}. To prove these cases Scheithauer lifted the inverse of the eta quotient associated with the cycle shape of $g$ to a modular form for the Weil representation and showed that the associated Borcherds product has the correct Fourier expansion.

Carnahan's work \cite{Car12} is also related to this problem. In his famous proof of Conway and Norton’s moonshine conjecture \cite{Bor92}, Borcherds constructed the twisted denominator identity of the monster algebra by considering the action of the monster group, i.e. the largest sporadic simple group. Borcherds also proved that this identity defines an automorphic form of weight $0$ on some orthogonal group of signature $(2,2)$. Carnahan further showed that the twisted denominator identities can be realized as Borcherds products. We know from \cite{CN79} that for each element of $\mathrm{Co}_0$ with trivial fixed-point sublattice the inverse of the associated eta quotient equals the McKay--Thompson series of some element in the monster up to additive constant. Therefore, Carnahan's result implies that the moonshine conjecture holds for all elements of $\mathrm{Co}_0$ with trivial fixed-point sublattice.  

In this paper we prove the moonshine conjecture for all elements of Conway's group using the theory of Jacobi forms:

\begin{theorem_ref}[\ref{th:moonshine}]
Let $g$ be an element of $\mathrm{Co}_0$ of level $N_g$ with fixed-point sublattice $\Lambda^g$.  Then the twisted denominator identity of the fake monster algebra corresponding to $g$ defines a Borcherds product of weight $\frac{1}{2}\rank(\Lambda^g)$ on the lattice $U(N_g)\oplus U\oplus \Lambda^g$. 
\end{theorem_ref}

We prove Theorem \ref{th:moonshine} by constructing, for each $d|N_g$, a weakly holomorphic Jacobi form $\phi_{g,d}$ of weight $0$ and index $\Lambda^g$ on $\Gamma_0(N_g/d)$ in terms of the traces of a certain lift of $g^d$ acting on subspaces of the vertex operator algebra of the Leech lattice. We then show that the Borcherds lift of the image of $(\phi_{g,d})_{d|N_g}$ under the isomorphism of Theorem \ref{th:iso} is exactly the twisted denominator function of the fake monster algebra corresponding to $g$. The proof does not require any explicit computation of the principal part of the image of $(\phi_{g,d})_{d|N_g}$. 

The proof fits into an intriguing relation inspired by Borcherds' works on vertex algebras, generalized Kac--Moody algebras and automorphic forms (see e.g. diagram at the end of \cite[\S 4]{Mol21}). As mentioned before, a generalized Kac--Moody algebra $\mathfrak{g}$ whose denominator $\Phi_\mathfrak{g}$ is a singular Borcherds product is conjectured to have a natural construction, given by the BRST cohomology of some suitable vertex algebra $V_{\mathfrak{g}}$. Moreover, one conjectures that the input forms that lift to $\Phi_\mathfrak{g}$ are vector-valued characters of $V_{\mathfrak{g}}$. For the generalized Kac--Moody superalgebras obtained by twisting the fake monster algebra, Theorem \ref{th:moonshine} implies that the inputs of the twisted denominators are essentially vector-valued characters of certain vertex algebras related to the Leech vertex operator algebra (see Remark \ref{rk:VOA}), so it is very likely that these algebras have a unified string-theoretic construction via the BRST cohomology. This construction was recently realized by M\"{o}ller \cite{Mol21} for the ten superalgebras corresponding to elements of square-free level in the Mathieu group $M_{23}<\mathrm{Co}_0$.

The paper is organized as follows. In \S \ref{sec:pre} we review some basics of modular forms on orthogonal groups and Jacobi forms of lattice index. \S \ref{sec:maximal} contains the proof of Theorem \ref{th:maximal}. In \S \ref{sec:isomorphism} we prove Theorem \ref{th:iso} and study the Fourier--Jacobi expansion of Borcherds products on $U(N)\oplus U\oplus L$. We prove Theorem \ref{th:prime-level} in \S \ref{sec:prime}. Finally, we prove Theorem \ref{th:moonshine} in \S \ref{sec:moonshine}.

\section{Preliminaries}\label{sec:pre}
In this section we review some basic properties of orthogonal modular forms and Jacobi forms.

\subsection{Modular forms on orthogonal groups}
Let $M$ be an even integral lattice of signature $(l,2)$ with $l\geq 3$. The complex manifold
$$
\{\mathcal{Z} \in  M\otimes \CC:  (\mathcal{Z}, \mathcal{Z})=0, (\mathcal{Z},\bar{\mathcal{Z}}) < 0\}
$$
has two connected components which are exchanged by the conjugate $\mathcal{Z} \mapsto \bar{\mathcal{Z}}$. We choose one of them and label it $\cA(M)$.  The quotient of $\cA(M)$ by $\mathbb{C}^{\times}$ is the symmetric domain of type IV attached to $M$:
$$
\cD(M)=\{[\mathcal{Z}] \in  \PP(M\otimes \CC):  \mathcal{Z} \in \cA(M) \}.
$$
Let $\Orth^+(M)$ denote the subgroup of $\Orth(M\otimes\RR)$ which preserves $M$ and $\cA(M)$.  Let $\Gamma$ be a finite-index subgroup of $\Orth^+(M)$. The most important example of $\Gamma$ is the \textit{discriminant kernel}, defined as
$$
\widetilde{\Orth}^+(M)=\{ g \in \Orth^+(M):\; g(v) - v \in M, \; \text{for all $v\in M'$} \},
$$
where $M'$ is the dual lattice of $M$. 

\begin{definition}\label{def: mf}
Let $k\in \ZZ$ and $\chi: \Gamma\to \CC^\times$ be a character. A holomorphic function $F: \cA(M)\to \CC$ is called a modular form of weight $k$ and character $\chi$ on $\Gamma$ if it satisfies
\begin{align*}
F(t\mathcal{Z})&=t^{-k}F(\mathcal{Z}), \quad  \text{for all $t \in \CC^\times$},\\
F(g\mathcal{Z})&=\chi(g)F(\mathcal{Z}), \quad \text{ for all $g\in \Gamma$}.
\end{align*}
\end{definition}
If $F$ is non-zero, then either $k=0$ in which case $F$ is constant, or $k\geq l/2-1$. The smallest possible positive weight $l/2-1$ is called the \textit{singular weight}.

Modular forms can be expanded into Fourier series on the tube domain around any $0$-dimensional cusp. Let $c$ be a primitive isotropic vector of $M$ and choose $c'\in M'$ satisfying $(c,c')=1$. Then $M_{c,c'}=M\cap c^\perp \cap (c')^\perp$ is an even lattice of signature $(l-1,1)$. The tube domain $\HH_{c,c'}$ is the connected component of
$$
\{ Z = X + iY :\; X, Y \in M_{c,c'}\otimes\RR, \; (Y,Y)<0  \}, 
$$
which embeds into $\cA(M)$ via the map
$$
\phi_{c,c'}: \HH_{c,c'} \to \cA(M), \quad Z \mapsto c'+Z-\frac{(Z,Z)+(c',c')}{2}c.
$$
Suppose $F$ is a modular form of weight $k$ and trivial character on $\widetilde{\SO}^+(M)$. Then $F$ is represented on $\HH_{c,c'}$ by the Fourier series
$$
F(Z)=\sum_{\substack{\lambda \in M_{c,c'}'\\(\lambda, \lambda)\leq 0}} c(\lambda) e^{2\pi i(\lambda, Z)}.
$$
Modular forms $F$ of general level have similar expansions with $M_{c,c'}$ replaced by a finite-index sublattice. When $F$ has singular weight, its Fourier coefficients $c(\lambda)$ are zero whenever $(\lambda,\lambda)\neq 0$.

\subsection{Modular forms for the Weil representation}
An even lattice $M$ of signature $(b^+,b^-)$ induces a discriminant form $D_M:=(M'/M, Q)$ with the quadratic form
$$
Q : M'/M \rightarrow \mathbb{Q}/\mathbb{Z}, \; Q(x + M) = (x,x)/2 + \mathbb{Z}.
$$
Let $\mathrm{Mp}_2(\mathbb{Z})$ be the metaplectic group, which consists of pairs $A = (A, \phi_A)$ where $A = \begin{psmallmatrix} a & b \\ c & d \end{psmallmatrix} \in \mathrm{SL}_2(\mathbb{Z})$ and $\phi_A$ is a holomorphic square root of $\tau \mapsto c \tau + d$ on $\mathbb{H}$, with the standard generators $T = (\begin{psmallmatrix} 1 & 1 \\ 0 & 1 \end{psmallmatrix}, 1)$ and $S = (\begin{psmallmatrix} 0 & -1 \\ 1 & 0 \end{psmallmatrix}, \sqrt{\tau})$. The \emph{Weil representation} $\rho_M$ is the representation of $\mathrm{Mp}_2(\mathbb{Z})$ on the group ring $\mathbb{C}[D_M] = \mathrm{span}(e_x: \; x \in D_M)$ defined by 
$$
\rho_M(T) e_x = \mathbf{e}(-Q(x)) e_x \quad \text{and} \quad \rho_M(S) e_x = \frac{\mathbf{e}( \mathrm{sign}(M) / 8)}{\sqrt{|D_M|}} \sum_{y \in D_M} \mathbf{e}(( x,y)) e_y,
$$
where $\mathbf{e}(t)=e^{2\pi it}$ for $t\in \CC$, and $\mathrm{sign}(M)=b^+ - b^- \mod 8$. Let $\bar{\rho}_M$ denote the complex conjugate of $\rho_M$, which is the dual representation of $\rho_M$. Note that $\bar{\rho}_M=\rho_{M(-1)}$. 

A \emph{weakly holomorphic modular form} of weight $k \in \frac{1}{2}\mathbb{Z}$ for the Weil representation $\rho_M$ is a holomorphic function $f : \mathbb{H} \rightarrow \mathbb{C}[D_M]$ that satisfies 
$$
f\Big|_k A(\tau)  := \phi_A(\tau)^{-2k} f(A \cdot \tau) = \rho_M(A) f(\tau), \quad \text{for all $A \in \mathrm{Mp}_2(\mathbb{Z})$,}
$$ 
and which is meromorphic at infinity; that is, $f$ is represented by a Fourier series of the form 
\begin{equation}\label{eq:Fourier}
f(\tau) = \sum_{x \in D_M} \sum_{\substack{n \in \mathbb{Z} - Q(x)\\ n \gg -\infty}} c_x(n) q^n e_x.    
\end{equation}
The finite sum of Fourier coefficients $c_x(n)q^n e_x$ with $n<0$ is called the \textit{principal part} of $f$. 
We further call $f$ a \textit{holomorphic modular form} if it is holomorphic at infinity, i.e. its principal part is zero. We remark that if $f$ is non-zero then $k+\mathrm{sign}(M)/2 \in \ZZ$, and if $\mathrm{sign}(M)$ is even then $\rho_M$ factors through a representation of $\SL_2(\ZZ)$. We denote the spaces of weakly holomorphic modular forms and holomorphic modular forms of weight $k$ for $\rho_M$ by $M_k^!(\rho_M)$ and $M_k(\rho_M)$, respectively. 

\begin{example}\label{ex:Theta}
Let $L$ be an even positive-definite lattice. One defines the Jacobi theta function associated with any $\gamma \in L'/L$ as
$$
\Theta_{L,\gamma}(\tau,\mathfrak{z}) = \sum_{\ell \in L+\gamma} e^{\pi i(\ell,\ell)\tau + 2\pi i(\ell, \mathfrak{z})}, \quad (\tau,\mathfrak{z})\in\HH\times (L\otimes\CC). 
$$
Then the function 
$$
\mathbf{\Theta}_L(\tau, \mathfrak{z}) = \sum_{x \in L'/L} \Theta_{L,x}(\tau,\mathfrak{z})e_x
$$ 
is a holomorphic Jacobi form of weight $\frac{1}{2}\rank(L)$ and index $L$ and multiplier $\bar{\rho}_L$, i.e. 
\begin{align*}
\mathbf{\Theta}_L \Big( \frac{a \tau + b}{c \tau + d}, \frac{\mathfrak{z}}{c \tau + d} \Big) &= (c \tau + d)^k e^{\pi i (\mathfrak{z},\mathfrak{z}) c / (c \tau + d)} \bar{\rho}_L(A) \mathbf{\Theta}_L(\tau, \mathfrak{z}), \quad A=\begin{pmatrix} a & b \\ c & d \end{pmatrix} \in \mathrm{Mp}_2(\mathbb{Z}),\\
\mathbf{\Theta}_L(\tau, \mathfrak{z} + \lambda \tau + \mu) &= e^{-\pi i ( (\lambda, \lambda)\tau + 2( \lambda, \mathfrak{z}))} \mathbf{\Theta}_L(\tau, \mathfrak{z}), \quad \lambda, \mu \in L.
\end{align*}
In particular, $\mathbf{\Theta}_L(\tau,0)$ is a modular form of weight $\frac{1}{2}\rank(L)$ for $\bar{\rho}_L$. 
\end{example}

Let $N$ denote the level of $M$, i.e. the smallest positive integer $N$ such that $N(v,v)\in 2\ZZ$ for all $v\in M'$. We define $\widetilde{\Gamma}_0(N)$ as the inverse image of $\Gamma_0(N)$ under the natural map $\Mp_2(\ZZ) \to \SL_2(\ZZ)$.
By \cite[Lemma 3.2]{Bor00}, for any $A\in \widetilde{\Gamma}_0(N)$ there exists $\chi_{D_M}(A) \in \CC^\times$ such that
$$
\rho_M(A) e_0 = \chi_{D_M}(A)e_0.
$$
Thus $\chi_{D_M}$ defines a character of $\widetilde{\Gamma}_0(N)$. In \cite{Bor00} Borcherds proposed the following lifting construction.

\begin{theorem}\label{th:scalar-to-vector}
Let $f$ be a weakly holomorphic scalar-valued modular form of weight $k\in \frac{1}{2}\ZZ$ and character $\chi_{D_M}$ for $\widetilde{\Gamma}_0(N)$. Then 
$$
F_{\Gamma_0(N),f,0}(\tau)=\sum_{A\in \widetilde{\Gamma}_0(N)\backslash \Mp_2(\ZZ)} f\Big|_k A(\tau) \rho_M(A^{-1})e_0
$$ 
defines a weakly holomorphic modular form of weight $k$ for $\rho_M$ which is invariant under $\Orth(D_M)$. 
\end{theorem}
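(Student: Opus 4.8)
The plan is to recognize $F_{\Gamma_0(N),f,0}$ as an averaging (induction) of the scalar form $f$ over the finite coset space $\widetilde{\Gamma}_0(N)\backslash\Mp_2(\ZZ)$, and to verify in turn the four required properties: that the sum is well-defined, that it transforms by $\rho_M$, that it is weakly holomorphic, and that it is $\Orth(D_M)$-invariant. The entire argument rests on the defining relation $\rho_M(A)e_0=\chi_{D_M}(A)e_0$ for $A\in\widetilde{\Gamma}_0(N)$ recalled just before the statement, together with the fact that $\chi_{D_M}$ is a character.

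First I would check that the summand $f|_k A\cdot\rho_M(A^{-1})e_0$ depends only on the right coset $\widetilde{\Gamma}_0(N)A$. Replacing $A$ by $\gamma A$ with $\gamma\in\widetilde{\Gamma}_0(N)$, the slash action gives $f|_k(\gamma A)=(f|_k\gamma)|_k A=\chi_{D_M}(\gamma)\,f|_k A$ by the transformation law of $f$, while $\rho_M((\gamma A)^{-1})e_0=\rho_M(A^{-1})\rho_M(\gamma^{-1})e_0=\chi_{D_M}(\gamma)^{-1}\rho_M(A^{-1})e_0$, using that $\chi_{D_M}$ is a homomorphism. The two scalar factors cancel, so each term, and hence the finite sum, is independent of the chosen representatives. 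This well-definedness is the one genuinely delicate point, and it is exactly where the character condition on $f$ is used.

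Next, for modularity I would compute $F|_k B$ for $B\in\Mp_2(\ZZ)$ directly. Using $(f|_k A)|_k B=f|_k(AB)$ and reindexing by $A'=AB$, which runs over a full set of coset representatives since right multiplication by the invertible $B$ permutes the right cosets, I obtain $F|_k B=\sum_{A'}f|_k A'\cdot\rho_M(BA'^{-1})e_0=\rho_M(B)\sum_{A'}f|_k A'\cdot\rho_M(A'^{-1})e_0=\rho_M(B)F$, where $\rho_M(B)$ factors out by linearity. For weak holomorphy, each $f|_k A$ is holomorphic on $\HH$ and meromorphic at $\infty$, because $A$ carries $\infty$ to a cusp of $\Gamma_0(N)$ at which $f$ is meromorphic; the finite sum $F$ is therefore holomorphic on $\HH$ and admits a Fourier expansion of the form \eqref{eq:Fourier}.

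Finally, for $\Orth(D_M)$-invariance I would use that any $\sigma\in\Orth(D_M)$ acts on $\CC[D_M]$ by $e_x\mapsto e_{\sigma(x)}$, fixes $e_0$ since $\sigma(0)=0$, and commutes with $\rho_M$ because the Weil representation is canonically $\Orth(D_M)$-equivariant. Applying $\sigma$ termwise then gives $\sigma(F)=\sum_A f|_k A\cdot\rho_M(A^{-1})\sigma(e_0)=\sum_A f|_k A\cdot\rho_M(A^{-1})e_0=F$. Apart from the coset bookkeeping in the first two steps, every verification is a short direct computation, so I expect no substantial obstacle beyond carefully tracking the cancellation of the $\chi_{D_M}$-factors.
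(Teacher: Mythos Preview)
Your proof is correct; it is the standard averaging (induction) argument and each of the four verifications goes through exactly as you describe. Note that the paper itself does not supply a proof of this statement: it is quoted as a result of Borcherds \cite{Bor00}, so there is no in-paper proof to compare against, but your argument is essentially the one behind that reference.
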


\begin{remark}\label{rk:Scheithauer-lifting}
Scheithauer calculated this lifting explicitly when $\mathrm{sign}(M)$ is even (see \cite[Theorems 5.4, 5.7]{Sch09}). We recall his result in the special case that the level of $M$ is a prime $p$, because only this case will be needed later. Then 
$$
\chi_{D_M}(A)=\left( \frac{a}{|D_M|}\right), \quad A= \begin{pmatrix} a & b \\ c & d \end{pmatrix} \in \Gamma_0(p). 
$$
If we write 
$$ 
f\Big|_k S (\tau)=\sum_{j=0}^{p-1}g_j(\tau),  \quad S=\begin{pmatrix} 0 & -1 \\ 1 & 0 \end{pmatrix},
$$
where 
$$g_j(\tau+1)=\exp(2\pi ij/p) g_j(\tau), \quad 0\leq j 
\leq p-1,$$
then we have 
\begin{equation*}
F_{\Gamma_0(p),f,0}(\tau)=f(\tau)e_0+\xi_1\frac{p}{\sqrt{\lvert D_M
\rvert}}\sum_{\gamma\in D_M} g_{j_\gamma}(\tau) e_\gamma,
\end{equation*}
where $j_\gamma/p = -(\gamma,\gamma)/2 \mod 1$ for $\gamma\in D_M$ 
and 
$$\xi_1=\left(\frac{-1}{\lvert D_M\rvert}\right)\exp\left( \frac{\mathrm{sign}(M) \pi i}{4}\right).
$$
\end{remark}

\subsection{Borcherds products}\label{subsec:Borcherds-products} Let $M$ be an even lattice of signature $(l,2)$ with $l\geq 3$. Let $f$ be a weakly holomorphic modular form of weight $1-l/2$ for $\rho_M$ with integral principal part (see \eqref{eq:Fourier}). The Borcherds multiplicative lift \cite{Bor98} produces a meromorphic modular form $\Borch(f)$ of weight $c_0(0)/2$ and some character (or multiplier system) on $\widetilde{\Orth}^+(M)$ which satisfies
\begin{enumerate}
    \item All zeros or poles of $\Borch(f)$ lie on rational quadratic divisors $\lambda^\perp$, where $\lambda \in M'$ is a primitive vector of positive norm. The multiplicity of $\lambda^\perp$ in the divisor is given by
    $$
    \sum_{d=1}^\infty c_{d\lambda}(-d^2\lambda^2/2). 
    $$
    \item About any $0$-dimensional cusp $c$, $\Borch(f)$ has an infinite product expansion on the associated tube domain $\HH_{c,c'}$ in which the exponents are Fourier coefficients of $f$. 
\end{enumerate}
Suppose $M=U(N)\oplus K$ for some signature $(\ell-1, 1)$ lattice $K$. We will need the infinite product expansion of $\Borch(f)$ at the level $N$ zero-dimensional cusp related to this splitting. Let $e\in U(N)$ be a primitive vector of norm $0$ and fix $e' \in U(N)'=U(1/N)$ satisfying $(e',e')=0$ and $(e,e')=1$. Then $\Borch(f)$ is represented on an open subset of $\HH_{e,e'}$ by the product
\begin{equation}\label{eq:Bor-Fourier}
\Borch(f)(Z) = \mathbf{e}((\rho, Z)) \prod_{\substack{\lambda\in K'\\ \lambda>0}}\prod_{a\m N} \left( 1- \mathbf{e}\left(\frac{a}{N} + (\lambda,Z)\right) \right)^{c_{\lambda + \frac{a}{N}e}(-\lambda^2/2)},
\end{equation}
where $\rho$ is the Weyl vector. 

\subsection{Jacobi forms of lattice index}\label{subsec:Jacobi-forms}
 One defines the Jacobi form of lattice index (see \cite{Gri94}) as a generalization of classical Jacobi forms introduced in \cite{EZ85}. Let $L$ be an even positive-definite lattice. Let $\Gamma$ be a congruence subgroup of $\SL_2(\ZZ)$ and let $\chi: \Gamma \to \CC^\times$ be a character. 

\begin{definition}
Let $k\in\ZZ$. A holomorphic function $\varphi : \HH \times (L \otimes \CC) \rightarrow \CC$ is 
called a {\it weakly holomorphic} Jacobi form of weight $k$, character $\chi$ and index $L$ on $\Gamma$ if it satisfies 
$$
\varphi\Big|_{k,L}A(\tau,\mathfrak{z}):=(c\tau + d)^{-k} 
\exp{\left(-i \pi  \frac{c(\mathfrak{z},\mathfrak{z})}{c 
\tau + d}\right)} \varphi \left( \frac{a\tau +b}{c\tau + d},\frac{\mathfrak{z}}{c\tau + d} 
\right) = \chi(A)\varphi ( \tau, \mathfrak{z} )
$$
for $A=\begin{psmallmatrix} a & b \\ c & d \end{psmallmatrix} \in \Gamma$ and
$$
\varphi (\tau, \mathfrak{z}+ x \tau + y)= 
\exp{\bigl(-i \pi ( (x,x)\tau +2(x,\mathfrak{z}))\bigr)} 
\varphi (\tau, \mathfrak{z} ), \quad x,y \in L,
$$
and if its Fourier expansion at each cusp $\mathbf{c}$ (represented by $A_\mathbf{c}\in \SL_2(\ZZ)$) of $\Gamma$ takes the form
\begin{equation*}
\varphi\Big|_{k,L}A_\mathbf{c}(\tau,\mathfrak{z})= \sum_{n\gg -\infty}\sum_{\ell\in L'}f_{\mathbf{c}}(n,
\ell)q^n\zeta^\ell,
\end{equation*}
where $q=e^{2\pi i \tau}$ and $\zeta^\ell=e^{2\pi i (\ell,
\mathfrak{z})}$. 
If at every cusp $f_{\mathbf{c}}(n,\ell) = 0$ whenever $2n-(\ell,\ell)<0$,
then $\varphi$ is called a {\it holomorphic} Jacobi form.  We denote the space of weakly holomorphic and holomorphic Jacobi forms by $J_{k,L}^!(\Gamma, \chi)$ and $J_{k,L}(\Gamma, \chi)$, respectively. 
\end{definition}

Jacobi forms appear in the Fourier--Jacobi expansion of an orthogonal modular form at a $1$-dimensional cusp. 
Let $M=U(N)\oplus U_1\oplus L$ and $F$ be a modular form of weight $k$ and trivial character on $\widetilde{\SO}^+(M)$. As in the previous subsection, we consider the Fourier expansion of $F$ at the 0-dimensional cusp related to $U(N)$. Fix a basis of the second hyperbolic plane $U_1=\ZZ e_1+\ZZ f_1$ with $(e_1,e_1)=(f_1,f_1)=0$ and $(e_1,f_1)=-1$. The associated tube domain as 
$$
\HH_{e,e'} = \{ Z = -\tau f_1 + \mathfrak{z} - \omega e_1 :\; \tau, \omega \in \HH, \; \mathfrak{z} \in L\otimes\CC,\; 2\im(\tau)\im(\omega) - (\mathfrak{z},\mathfrak{z})>0 \}.
$$
This coordinate system is chosen such that $(\alpha,Z)=n\tau +(\ell,\mathfrak{z})+ m\omega$ for $\alpha=ne_1 + \ell + m f_1$ with $n,m\in \ZZ$ and $\ell \in L'$.
Then $F$ has the following Fourier--Jacobi expansion on $\HH_{e,e'}$:
$$
F(Z)=\sum_{m=0}^\infty \sum_{n=0}^\infty \sum_{\substack{\ell \in L' \\ 2nm\geq (\ell,\ell)}} f(n,\ell,m)q^n\zeta^\ell s^m = \sum_{m=0}^\infty \phi_m(\tau,\mathfrak{z})s^m, \quad s=e^{2\pi i\omega}.
$$
Let $H(L)$ be the integral Heisenberg group of $L$ (see e.g. \cite{Gri94}). It is easy to check that $\Gamma_0(N) \ltimes H(L)$ embeds into $\Orth^+(M)$ and preserves the isotropic plane spanned by $e$ and $e_1$. Moreover, $\Gamma_1(N) \ltimes H(L)$ embeds into $\widetilde{\Orth}^+(M)$. Therefore, for any $m$ the function $\phi_m$ defined above is a holomorphic Jacobi form of weight $k$ and index $L(m)$ on $\Gamma_1(N)$. If $F$ is a modular form on $\Orth^+(M)$, then $\phi_m$ is a Jacobi form on $\Gamma_0(N)$. 

Following \cite[\S 2]{CG11} we define index-raising Hecke operators for Jacobi forms on $\Gamma_0(N)$. Let $\phi \in J_{k,L}^!(\Gamma_0(N), \chi_N)$, where $\chi_N$ is a Dirichlet character of modulus $N$. For any positive integer $m$ we have
\begin{equation}\label{eq:index-raising}
\phi\Big|_{k,L} T_{-}^{(N)}(m)(\tau,\mathfrak{z}):= m^{-1}\sum_{\substack{ad=m\\(a,N)=1\\ b \m d}} a^k\chi_N(a) \phi\left( \frac{a\tau + b}{d}, a\mathfrak{z} \right) \in J_{k,L(m)}^!(\Gamma_0(N),\chi_N).    
\end{equation}
The Fourier expansion of $\phi\Big|_{k,L} T_{-}^{(N)}(m)$ at infinity is given by
$$
\phi|_k T_{-}^{(N)}(m)(\tau,\mathfrak{z}) = \sum_{n, \ell} \sum_{\substack{a|(n,\ell,m)\\(a,N)=1\\a>0}} a^{k-1}\chi_N(a) f\left(\frac{nm}{a^2}, \frac{\ell}{a}\right) q^n\zeta^\ell,
$$
where $a|(n,\ell,m)$ means that $a|n$, $a|m$ and $a^{-1}\ell \in L'$. 

There is a standard isomorphism between Jacobi forms on $\SL_2(\ZZ)$ and modular forms for $\rho_L$:
\begin{equation}\label{eq:isomorphism-Jacobi}
\begin{split}
M_{k-\frac{1}{2}\rank(L)}^!(\rho_L) &\stackrel{\sim}{\longrightarrow} J_{k,L}^!(\SL_2(\ZZ)) \\
f(\tau)=\sum_{\gamma\in L'/L} f_\gamma(\tau)e_\gamma &\longmapsto \sum_{\gamma\in L'/L} f_\gamma(\tau) \Theta_{L,\gamma}(\tau,\mathfrak{z}).
\end{split}
\end{equation}
This map induces an isomorphism between $M_{k-\frac{1}{2}\rank(L)}(\rho_L)$ and $J_{k,L}(\SL_2(\ZZ))$. Thus a holomorphic Jacobi form of weight $\frac{1}{2}\rank(L)$ and index $L$ on $\SL_2(\ZZ)$ is a $\CC$-linear combination of Jacobi theta functions $\Theta_{L,\gamma}$. The weight $\frac{1}{2}\rank(L)$ is called the \textit{singular weight} of Jacobi forms. In \S \ref{sec:isomorphism} we will extend this classical isomorphism to an isomorphism between Jacobi forms on $\Gamma_1(N)$ and modular forms for $\rho_{U(N)\oplus L}$.

\section{Singular Borcherds products on maximal lattices}\label{sec:maximal}
In this section we classify singular Borcherds products on maximal lattices using the Jacobi forms representation of Borcherds products on $2U\oplus L$ established by Gritsenko and Nikulin \cite{GN98, Gri18}. We will prove 

\begin{theorem}\label{th:maximal}
$\II_{26,2}$ is the unique maximal even lattice of signature $(l,2)$ with $l\geq 3$ which has a holomorphic Borcherds product of singular weight, and Borcherds' form $\Phi_{12}$ is the unique singular Borcherds product on $\II_{26,2}$. 
\end{theorem}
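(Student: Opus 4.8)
The plan is to read the first Fourier--Jacobi coefficient of $\Borch(F)$ at a cusp adapted to its Weyl vector and to recognize it as a power of $\eta$.

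First I would use that a maximal even lattice $M$ of signature $(l,2)$ with $l\geq 3$ splits as $M=2U\oplus L=U\oplus U_1\oplus L$, where $L$ is a maximal positive-definite even lattice of rank $l-2$ and $D_M\cong D_L$, both hyperbolic planes being unimodular. Let $f\in M_{1-l/2}^!(\rho_M)$ be the input; its weight equals $c_0(0)/2=l/2-1$, so $c_0(0)=l-2$. At the $1$-dimensional cusp attached to this splitting, $\Borch(F)=\sum_{m\geq 0}\phi_m(\tau,\mathfrak{z})s^m$ with each $\phi_m$ a holomorphic Jacobi form of weight $l/2-1$ and index $L(m)$. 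As $l/2-1=\frac{1}{2}\rank(L(m))$ is exactly the singular weight for index $L(m)$, every nonzero $\phi_m$ with $m\geq 1$ is a linear combination of the theta functions $\Theta_{L(m),\gamma}$ (cf. the theta decomposition \eqref{eq:isomorphism-Jacobi}).

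The decisive structural input is that singular weight forces the Weyl vector $\rho$ to be isotropic: the leading Fourier term is $\mathbf{e}((\rho,Z))$ with coefficient $1$, so $\rho$ lies in the support of $F$ and $(\rho,\rho)=0$. I would choose the splitting so that the $\tau$-direction is spanned by the primitive isotropic vector $e_1$ underlying $\rho$, say $\rho=r\,e_1$ with $r\geq 1$. Since the Weyl vector pairs nontrivially with every root appearing in the divisor, no such root is orthogonal to the isotropic plane of the cusp, so $\Borch(F)$ does not vanish there and $\phi_0\neq 0$. Extracting the $s^0\zeta^0$ part of the product \eqref{eq:Bor-Fourier} leaves only the factors with $\lambda\in\ZZ e_1$; because $ne_1\in M$ is trivial in $D_M$, each exponent equals $c_0(0)=l-2$, and therefore $\phi_0(\tau)=q^{\,r}\prod_{n\geq 1}(1-q^n)^{l-2}$.

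Now $\phi_0$ is a genuine holomorphic modular form of weight $(l-2)/2$. Using $\prod_{n\geq 1}(1-q^n)^{l-2}=q^{-(l-2)/24}\eta^{l-2}$ we get $\phi_0=q^{\,r-(l-2)/24}\eta^{l-2}$, so the quotient $\phi_0/\eta^{l-2}=q^{\,r-(l-2)/24}$ is a weight-zero modular function; but a power $q^{b}$ is modular only for $b=0$, forcing $r=(l-2)/24$. The two delicate points---which I expect to be the main obstacle---are to show that $\rho$ is primitive, i.e. $r=1$ (this is what rules out the a priori solutions $l-2\in\{48,72,\dots\}$, and should follow from the fact that the Weyl vector of a reflective product pairs to $1$ with its norm-$2$ roots), and to deduce that $L$ is unimodular. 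Granting these, $l-2=24$, so $l=26$, $\rank(L)=24$, and $\phi_0=\Delta=\eta^{24}$; unimodularity of $L$ should come from $\phi_0$ being a level-one form together with the theta structure of the $\phi_m$ and the identity $D_M\cong D_L$. Since $\II_{26,2}$ is the unique even unimodular lattice of signature $(26,2)$, this yields $M=2U\oplus L=\II_{26,2}$.

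For uniqueness on $\II_{26,2}$ the Weil representation $\rho_M$ is trivial, so $f\in M_{-12}^!(\SL_2(\ZZ))$ is a scalar form with $c_f(0)=24$ and effective divisor. Pairing against $E_{14}=1-24\sum_{n\geq 1}\sigma_{13}(n)q^n$ (there is no obstruction, as $S_{14}(\SL_2(\ZZ))=0$) gives $c_f(0)=24\sum_{n\geq 1}\sigma_{13}(n)c_f(-n)$; since $\sigma_{13}(1)=1$ while $\sigma_{13}(n)>1$ for $n\geq 2$, the only admissible principal part is $c_f(-1)=1$. Hence $f=1/\Delta$ and $\Borch(f)=\Phi_{12}$.
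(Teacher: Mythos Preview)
Your overall strategy---read off the zeroth Fourier--Jacobi coefficient and recognize an eta power---is the same as the paper's, but your execution has real gaps at exactly the points the paper works hardest on.

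The first gap is in your formula $\phi_0(\tau)=q^{r}\prod_{n\ge1}(1-q^n)^{l-2}$. You say you ``extract the $s^0\zeta^0$ part'' and keep only the factors with $\lambda\in\ZZ e_1$. But the $s^0$-coefficient of the product (which \emph{is} $\phi_0$) also contains all factors with $\lambda=(n,\ell,0)$, $\ell\in L'\setminus\{0\}$; these contribute $(1-q^n\zeta^\ell)^{c_\ell(-\ell^2/2)}$, and you have not shown that $c_\ell(-\ell^2/2)=0$ for $\ell\ne0$. Choosing the inner $U_1$ so that the Weyl vector lies along $e_1$ does give $C=0$, but $C=\frac{1}{\rank L}\sum_{\ell>0}c_\ell(-\ell^2/2)(\ell,\ell)$ vanishing does not force each $c_\ell(-\ell^2/2)$ to vanish. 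The paper avoids this by instead choosing the splitting so that $F$ does \emph{not vanish} at the cusp (Lemma~\ref{lem:maximal}), and then invoking Gritsenko's theorem \cite[Theorem~4.2]{Gri18} to conclude $f(0,\ell)=0$ for $\ell\ne0$. Your alternative argument that ``the Weyl vector pairs nontrivially with every root'' does not prove nonvanishing at the cusp either: nothing prevents a root orthogonal to the isotropic plane from occurring with positive divisor multiplicity.

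The second gap is the pair of points you yourself flag. Showing $r=1$ (equivalently $N=1$ in the paper's notation) cannot come from ``the Weyl vector of a reflective product pairs to $1$ with its norm-$2$ roots'': you do not know the product is reflective, nor that there are any norm-$2$ roots in the divisor. The paper's mechanism is different: the first Fourier--Jacobi coefficient $-\Delta^N\phi_0$ must be a singular-weight Jacobi form, hence annihilated by the heat operator, and comparing $q^0$-terms forces $N=1$ via equation~\eqref{eq:N=1}. Showing that $L$ embeds in the Leech lattice (rather than ``$L$ is unimodular'', which is not what is proved) uses that $\Delta\phi_0$ is a $\CC$-linear combination of $\Theta_{L,\gamma}$; the $S$-invariance of this combination then produces an isotropic subgroup $\mathcal{A}\subset L'/L$ of order $\sqrt{|L'/L|}$, and the resulting overlattice is unimodular of rank~$24$ with no $2$-roots. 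None of this follows from $\phi_0$ being a level-one form alone.

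Your uniqueness argument on $\II_{26,2}$ via pairing against $E_{14}$ is a nice idea distinct from the paper's (the paper gets uniqueness constructively from Theorem~\ref{th:Leech-type}), but it assumes $c_f(-n)\ge0$ for every $n$. That is not automatic for holomorphic Borcherds products---only the divisor multiplicities $\sum_{d\ge1}c_f(-d^2n)$ are nonnegative---so this step also needs justification.
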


We divide the proof into several lemmas. 

\begin{lemma}\label{lem:U}
Let $M$ be an even lattice. If $c \in M$ is a norm $0$ vector satisfying $(c, M)=\ZZ$, then we can split $M=U\oplus K$ with $c\in U$.
\end{lemma}
\begin{proof}
Since $(c, M)=\ZZ$, there exists $c' \in M$ such that $(c,c')=1$. We define $b=c' - \frac{1}{2}(c',c')c$. Then $b\in M$, $(b,b)=0$ and $(b,c)=1$. Let $K=M\cap c^\perp \cap b^\perp$. For any $v\in M$, $v-(v,b)c-(v,c)b \in K$. Thus $M=U\oplus K$ for $U=\ZZ c + \ZZ b$. 
\end{proof}

\begin{lemma}\label{lem:1-cusp}
Let $M$ be an even lattice of signature $(l,2)$ with $l\geq 3$ and $\Gamma$ be a finite-index subgroup of $\Orth^+(M)$. If $F$ is a nonzero modular form for $\Gamma$ of singular weight then there is a $1$-dimensional cusp at which $F$ does not vanish.
\end{lemma}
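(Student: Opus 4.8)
The plan is to produce the required $1$-dimensional cusp directly from a single nonzero Fourier coefficient of $F$, exploiting the fact that singular weight forces every Fourier coefficient to be supported on the isotropic cone.

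First I would fix a $0$-dimensional cusp. Since $M$ has signature $(l,2)$ with $l\geq 3$, it is indefinite of rank $l+2\geq 5$, hence $M\otimes\QQ$ is isotropic and $M$ contains a primitive isotropic vector $c$; choose $c'\in M'$ with $(c,c')=1$ and put $M_{c,c'}=M\cap c^\perp\cap(c')^\perp$, of signature $(l-1,1)$. On the associated tube domain $F$ has a Fourier expansion $F(Z)=\sum_{\lambda}c(\lambda)\,\mathbf{e}((\lambda,Z))$ with $\lambda$ running over a finite-index sublattice of $M_{c,c'}'$, and since $F$ has singular weight $l/2-1$ its coefficients satisfy $c(\lambda)=0$ whenever $(\lambda,\lambda)\neq 0$.

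Next I would extract an isotropic direction. Because $F\neq 0$ has positive weight $l/2-1>0$ it cannot be a nonzero constant, so some coefficient $c(\lambda_0)$ with $\lambda_0\neq 0$ is nonzero, and by the previous step $\lambda_0$ is isotropic. As $\lambda_0\in M_{c,c'}\otimes\QQ\subseteq c^\perp$ while $(c,c')=1$ pairs $c$ nontrivially with $c'$, the vectors $c$ and $\lambda_0$ are isotropic, orthogonal and linearly independent, so $P:=\QQ c+\QQ\lambda_0$ is a totally isotropic plane and $P\cap M$ is a primitive rank-$2$ isotropic sublattice, i.e.\ the datum of a $1$-dimensional cusp of $\cD(M)$.

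Finally I would evaluate $F$ at $P$. With the $0$-dimensional cusp at $c$ and the second isotropic line $\QQ\lambda_0$ of $P$ spanned by a primitive vector $e_1$ (so $P=\QQ c+\QQ e_1$), the restriction of $F$ to the $1$-dimensional cusp is the zeroth Fourier--Jacobi coefficient $\phi_0$ of the expansion $F=\sum_{m\geq 0}\phi_m s^m$. As in \S\ref{subsec:Jacobi-forms}, positivity of the index lattice forces $\phi_0(\tau,\mathfrak{z})=\sum_n c(n e_1)q^n$ to be independent of $\mathfrak{z}$ and supported on the line $\QQ\lambda_0$; writing $\lambda_0=r e_1$ with $r>0$, the coefficient of $q^{r}$ is $c(\lambda_0)\neq 0$. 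Distinct lattice points of $\QQ\lambda_0$ contribute distinct powers of $q$, so no cancellation occurs, $\phi_0\not\equiv 0$, and $F$ does not vanish at the cusp $P$. The main obstacle is the bookkeeping in this last step: matching the Fourier expansion at the $0$-dimensional cusp $c$ with the Fourier--Jacobi expansion at the $1$-dimensional cusp $P$. In general this means passing to a normal form $M=U(a)\oplus U(b)\oplus L$ with $c$ and $e_1$ in the two hyperbolic planes (via Lemma \ref{lem:U}, after rescaling a plane when the relevant pairing is not unimodular) and tracking the level so that $\lambda_0$ is correctly located inside $\phi_0$. Once the two coordinate systems are identified, nonvanishing is immediate from $c(\lambda_0)\neq 0$.
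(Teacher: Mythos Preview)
Your approach is correct and shares its opening with the paper's proof: both fix a $0$-dimensional cusp $c$, expand $F$ there, and use singularity to extract a nonzero isotropic vector $\lambda_0$ from a nonzero Fourier coefficient, giving a totally isotropic plane $P=\QQ c+\QQ\lambda_0$. From that point the arguments diverge. The paper does not try to show that $F$ is nonzero at this particular $P$; instead it observes that the construction works at \emph{every} $0$-dimensional cusp, so every $0$-dimensional boundary point lies in the closure of a $1$-dimensional one, and then invokes the standard fact that a nonzero form of singular weight is never a cusp form to conclude that $F$ fails to vanish on some $1$-dimensional boundary component. Your route is more constructive: you pin down the specific cusp $P$ and locate $c(\lambda_0)$ as a coefficient of the zeroth Fourier--Jacobi term $\phi_0$. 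This avoids appealing to the cusp-form fact and gives an explicit cusp, at the cost of the coordinate bookkeeping you flag (splitting off a hyperbolic plane along $e_1$ and matching the two expansions); the paper's argument is shorter but nonconstructive. Both are valid.
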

\begin{proof}
Let $c \in M$ be an isotropic vector and choose $c'\in M'$ with $(c,c')=1$. Since $F$ is nonzero, there is some vector $b \in M_{c,c'}\otimes\QQ$ whose Fourier coefficient in the expansion of $F$ on $\mathbb{H}_{c, c'}$ is nonzero. The vector $b$ must be isotropic because $F$ is singular; therefore, every zero-dimensional cusp ($\mathbb{Z}c$) is contained in a one-dimensional cusp ($\mathbb{Z}b + \mathbb{Z}c$). Since $F$ is singular, it is not a cusp form so it must fail to vanish identically on some $1$-dimensional cusp.
\end{proof}

\begin{lemma}\label{lem:maximal}
Let $M$ be a maximal even lattice. If $F$ is a modular form of singular weight on $\widetilde{\Orth}^+(M)$, then $M$ splits as $M=2U\oplus L$ in such a way that $F$ does not vanish at the $1$-dimensional cusp represented by $2U$, i.e. the associated zeroth Fourier--Jacobi coefficient is not zero.
\end{lemma}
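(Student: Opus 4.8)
The plan is to show that a maximal even lattice splits off a hyperbolic plane in every isotropic direction, and then to choose such a splitting compatibly with a cusp supplied by Lemma~\ref{lem:1-cusp}. The substantive input is the structural statement about maximal lattices; once it is in place, the rest is bookkeeping about the Fourier--Jacobi expansion.

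First I would record that, for a maximal even lattice $M$, every primitive isotropic vector $v$ satisfies $(v,M)=\ZZ$. Indeed, if $(v,M)=d\ZZ$ with $d>1$, then $M+\ZZ(v/d)$ is an even lattice in $M\otimes\QQ$ strictly containing $M$, because $(v/d,v/d)=0$ and $(v/d,w)=(v,w)/d\in\ZZ$ for every $w\in M$; this contradicts maximality. Lemma~\ref{lem:U} then gives $M=U\oplus K$ with $v\in U$, and $K$ is again maximal, since any even overlattice $\widetilde K$ of $K$ in $K\otimes\QQ$ would yield $U\oplus\widetilde K\supsetneq M$. As $K$ has signature $(l-1,1)$ it is still indefinite, so the same statement applies to $K$.

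Next I would fix a primitive isotropic $c\in M$ and, by the above, write $M=U\oplus K$ with $c\in U$, choosing $c'\in U$ with $(c,c')=1$ so that $M_{c,c'}=K$. By the argument in the proof of Lemma~\ref{lem:1-cusp}, the Fourier expansion of $F$ at the $0$-dimensional cusp $\ZZ c$ has a nonzero coefficient $c(b)\neq0$ at some isotropic $b\in K'$. Let $\kappa\in K$ be the primitive generator of the isotropic line $\QQ b$; applying the structural fact to the maximal lattice $K$ gives $K=U'\oplus L$ with $\kappa\in U'$. Hence $M=U\oplus U'\oplus L=2U\oplus L$.

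Finally I would identify the zeroth Fourier--Jacobi coefficient for this splitting. Taking $U$ as the plane carrying the $0$-dimensional cusp and $U'$ as the hyperbolic plane $U_1$ of \S\ref{sec:pre} (so that $e_1=\kappa$ and $f_1$ is the complementary isotropic generator), the $1$-dimensional cusp represented by $2U$ is the isotropic plane $\langle c,\kappa\rangle$, and $\phi_0$ collects exactly those Fourier coefficients $c(\lambda)$ whose $f_1$-component vanishes. Because $\kappa\in U'$, the vector $b\in\QQ\kappa$ has trivial $L'$- and $f_1$-components, so its index lies in the $m=0$ layer; thus $c(b)\neq0$ is a coefficient of $\phi_0$ and $\phi_0\neq0$, as required. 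The only delicate point is precisely this alignment: I must split $K$ along the cusp direction $b$ itself, placing its primitive generator $\kappa$ in the second hyperbolic plane rather than splitting along an arbitrary isotropic vector, so that $b$ falls into the $m=0$ layer and the nonvanishing of a single isotropic Fourier coefficient translates into $\phi_0\neq0$.
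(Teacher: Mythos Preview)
Your proposal is correct and takes essentially the same approach as the paper: use maximality to split off a hyperbolic plane along each of two isotropic directions chosen so that the resulting $2U$ contains a $1$-dimensional cusp at which $F$ does not vanish. The only difference is organizational---the paper first invokes Lemma~\ref{lem:1-cusp} to obtain the isotropic plane $P_F$ and then aligns both splittings with it, whereas you start from an arbitrary $c$, locate a nonzero isotropic Fourier coefficient directly, and spell out in your final paragraph why this lands in $\phi_0$, which the paper leaves as ``by construction''.
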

\begin{proof}
By Lemma  \ref{lem:1-cusp}, there is some $1$-dimensional cusp, represented by an isotropic plane $P_F \subsetneq M$, on which $F$ does not vanish identically. Choose a primitive vector $c \in P_F$ and choose $c'\in M'$ with $(c,c')=1$. 
Since $M$ is maximal, $(c,M)=\ZZ$. By Lemma \ref{lem:U}, $M=U\oplus M_{c,c'}$ with $c\in U$. We take a primitive vector $b$ in $P_F \cap M_{c,c'}$. Then $M_{c,c'}$ is again maximal, which implies that $M_{c,c'}=U_1\oplus L$ with $b\in U_1$. By construction, $F$ does not vanish at the $1$-dimensional cusp associated to the decomposition $M=U\oplus U_1\oplus L$. 
\end{proof}

\begin{theorem}\label{th:Leech-type}
Let $L$ be an even positive-definite lattice and $F$ be a holomorphic Borcherds product of singular weight on $\widetilde{\Orth}^+(2U\oplus L)$ which does not vanish at the $1$-dimensional cusp represented by $2U$.  Then $L$ is a finite-index sublattice of the Leech lattice, and $F$ is the pullback of the Borcherds form $\Phi_{12}$. 
\end{theorem}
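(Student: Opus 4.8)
The plan is to read the geometry of $F$ off its Fourier--Jacobi expansion at the $1$-dimensional cusp $2U$ and combine it with Gritsenko--Nikulin's description of products on $2U\oplus L$. Write $M=2U\oplus L=U\oplus U_1\oplus L$ and expand $F=\sum_{m\ge 0}\phi_m(\tau,\mathfrak z)\,s^m$ as in \S\ref{sec:pre}, so each $\phi_m$ is a holomorphic Jacobi form of weight $w=\tfrac12\rank(L)$ (the singular weight) and index $L(m)$ on $\SL_2(\ZZ)$, and $\phi_0\ne 0$ by hypothesis. Since $F$ is singular its Fourier coefficients are supported on isotropic vectors of $(U_1\oplus L)'$; for the $m=0$ term this forces the $L'$-index to vanish, so $\phi_0$ is a \emph{scalar} holomorphic modular form of weight $w$ on $\SL_2(\ZZ)$.

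Next I would identify $\phi_0$ and the Weyl vector. Extracting the $s^0$-part of the product \eqref{eq:Bor-Fourier} (with $N=1$) and specialising $\mathfrak z=0$ exhibits $\phi_0$ as a product $q^{\rho_\tau}\prod_{n\ge 1}(1-q^n)^{C_n}$, hence as a holomorphic modular form with no zeros on $\HH$; the valence formula on $\SL_2(\ZZ)$ forces $\phi_0=c\,\Delta^{w/12}$, so $12\mid w$. Moreover, singular weight means that the leading Fourier exponent of $F$, namely the Weyl vector $\rho$ of the product, is isotropic in $U_1\oplus L$, and the $s^1$-coefficient $\psi:=\phi_1/\phi_0$ is a weak Jacobi form of weight $0$ and index $L$ whose Fourier coefficients are (up to sign) the multiplicities of the divisor components $\lambda^\perp$ transverse to the cusp.

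The heart of the argument, and the main obstacle, is to conclude that $L$ is a finite-index sublattice of the Leech lattice. The singular support condition on $\phi_1=\phi_0\,\psi$ together with the holomorphy of $F$ forces the divisor of $F$ to be \emph{reflective}, i.e.\ concentrated on hyperplanes orthogonal to roots of $U_1\oplus L$, with $\rho$ the Weyl vector of the reflection subgroup they generate. Thus $U_1\oplus L$ is a reflective Lorentzian lattice carrying an isotropic Weyl vector. By the classification and rank bounds for reflective hyperbolic lattices, together with Conway's description of $\II_{25,1}=U_1\oplus\Lambda$ and its isotropic Weyl vector, this singular reflective structure can only occur in rank $26$; hence $\rank(L)=24$, $L$ has no roots, and $L$ embeds as a finite-index sublattice of $\Lambda$. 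In particular $\phi_0=\eta^{24}=\Delta$. I expect the delicate point here to be establishing reflectivity of the divisor directly from the singular-weight condition and ruling out the higher-rank lattices $U_1\oplus L$ of signature $(1+\rank(L),1)$.

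Finally I would deduce uniqueness. Finite index gives $(2U\oplus L)\otimes\RR=\II_{26,2}\otimes\RR$, so $\cD(2U\oplus L)=\cD(\II_{26,2})$ and the restriction of $\Phi_{12}$ is a holomorphic Borcherds product of singular weight on $2U\oplus L$ which does not vanish at the $2U$ cusp. Both $F$ and this restriction have weight $12$ on the same domain, and since their zeroth and first Fourier--Jacobi coefficients agree they have the same reflective divisor; their quotient is therefore a nowhere-vanishing modular form of weight $0$, i.e.\ a constant. Hence $F$ is, up to scale, the pullback of $\Phi_{12}$.
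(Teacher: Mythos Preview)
Your opening is sound: the zeroth Fourier--Jacobi coefficient is a nonvanishing holomorphic modular form of weight $w=\tfrac12\rank(L)$ on $\SL_2(\ZZ)$, so the valence formula forces $24\mid\rank(L)$ and $\phi_0=c\,\Delta^N$ with $N=\rank(L)/24$. The serious gap is in what you call ``the heart of the argument.'' You want to deduce that the divisor of $F$ is reflective for $U_1\oplus L$ and then invoke a classification of reflective Lorentzian lattices with isotropic Weyl vector to pin down the rank. Neither step is available. The reflectivity result you have in mind (\cite{WW23}) only guarantees that $F$ is reflective on \emph{some} lattice in $M\otimes\QQ$, not on $U_1\oplus L$ itself; and even granting reflectivity, there is no off-the-shelf classification that forces $\rank(L)=24$ --- reflective hyperbolic lattices with isotropic Weyl vectors exist in many ranks, and sorting them out is at least as hard as the theorem you are trying to prove. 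So your proposal identifies the right place where the difficulty lives but does not actually cross it.

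The paper's route avoids reflection groups entirely and works directly with the weight-$0$ input Jacobi form $\psi=\phi_1/\phi_0$ (your $\psi$). Since $\phi_1=\Delta^N\psi$ has singular weight, the heat operator annihilates it, giving $H_0(\psi)=-NE_2\psi$; comparing $q^0$-terms yields $\sum_{n=1}^N f(-n,0)\sigma_1(n)=N$. On the other hand $\Delta^N\psi$ is a linear combination $\sum_\gamma c_\gamma\Theta_{L,\gamma}$ of theta series, so the $f(-n,0)$ are nonnegative integers with $f(-N,0)=c_0\ge 1$, forcing $N=1$ and $c_0=1$. Simple zeros (from \cite{WW23}) give $c_\gamma\in\{0,1\}$, and the $S$-invariance of $\sum_\gamma c_\gamma\Theta_{L,\gamma}$ shows that the set $\mathcal A=\{\gamma:c_\gamma=1\}$ satisfies $|\mathcal A|=\sqrt{|L'/L|}$ and has pairwise integral pairings; hence $L$ and $\mathcal A$ span an even unimodular rank-$24$ overlattice with no $2$-roots, i.e.\ the Leech lattice, and $F$ is the pullback of $\Phi_{12}$. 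This heat-operator-plus-theta-decomposition argument is the missing idea in your proposal.
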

\begin{proof}
We interpret the preimage of $F$ under the Borcherds lift as a weakly holomorphic Jacobi form of weight $0$ and index $L$:
$$
\phi_0(\tau,\mathfrak{z}) =  \sum_{n\gg-\infty}\sum_{\ell \in L'} f(n,\ell) q^n\zeta^\ell \in J_{0,L}^!(\SL_2(\ZZ)).
$$
Note that $f(n,\ell)\in \ZZ$ for $2n-(\ell,\ell)<0$. 
By \cite[Theorem 4.2]{Gri18}, the $q^0$-term of $\phi_0$ is $[\phi_0]_{q^0}=\rank(L)$, i.e. $f(0,\ell)=0$ if $\ell\neq 0$; otherwise the zeroth Fourier--Jacobi coefficient would be zero. (In general, the leading non-zero Fourier--Jacobi coefficient of $F$ is given by the theta block $\widetilde{\psi}_{L;C}$ defined in the proof of \cite[Theorem 4.2]{Gri18}.) By \cite[Proposition 2.6]{Gri18}, we have
$$
\frac{\rank(L)}{24}= \sum_{n<0}\sum_{\ell \in L'} f(n,\ell) \sigma_1(n). 
$$
The number of the right hand side is an integer which we denote $N$.  Then $\rank(L)=24N$, and the zeroth Fourier--Jacobi coefficient of $F$ is $\Delta^N$, where $\Delta=\eta^{24}$ is the modular discriminant. By \cite[Corollary 4.3]{Gri18}, the first Fourier--Jacobi coefficient of $F$ is $-\Delta^N \phi_0$, which has to be a holomorphic Jacobi form of singular weight, trivial character and index $L$ on $\SL_2(\ZZ)$.  Considering the action of the `heat operator' $H_{12N}$ which annihilates singular Jacobi forms (see \cite[Lemma 2.2]{Wan21} for details), from $$0=H_{12N}(\Delta^N\phi_0)=\Delta^N H_0(\phi_0)$$ we obtain $$H(\phi_0)=-NE_2\phi_0.$$ Comparing $q^0$-terms in this equation yields 
\begin{equation}\label{eq:N=1}
0=\sum_{n=1}^N f(-n,0)\sigma_1(n)-N.    
\end{equation}
Since $\Delta^N\phi_0$ is a Jacobi form of singular weight, it can be expressed as a linear combination of Jacobi theta functions,
$$
\Delta^N\phi_0 = \sum_{\gamma} c_\gamma \Theta_{L,\gamma}, \quad c_{\gamma} \in \mathbb{C},
$$
where $\gamma$ runs through the cosets of $L'/L$ with $Q(\gamma)=0 \mod 1$.  Since all coefficients $f(-n,0)$ of $\phi_0$ come from $ \Theta_{L,0} / \Delta^N$, we conclude that $c_0$ is a positive integer, all $f(-n,0)$ are non-negative integers for $n\geq 0$ and $f(-N,0)=c_0$, which yields $N=1$ and $c_0=1$ by \eqref{eq:N=1}. Therefore,
$$
\phi_0 = \frac{ \Theta_{L,0}}{\Delta} + \sum_{\gamma} c_\gamma \frac{ \Theta_{L,\gamma}}{\Delta},  
$$
where $\Theta_{L,0}=1+O(q^2)$ and $\Theta_{L,\gamma}=O(q^2)$ if $\gamma\neq 0$ and $c_\gamma \neq 0$.
Since $F$ has only simple zeros (see \cite[Theorem 1.2]{WW23}), we find $c_\gamma \in\{0, 1\}$ for all $\gamma \neq 0$.
We define the set
$$
\mathcal{A}= \{ \gamma \in L'/L : Q(\gamma) =0 \m 1, \; c_\gamma=1 \}.
$$
The invariance of $\Delta\phi_0$ under $S = \begin{psmallmatrix} 0 & -1 \\ 1 & 0 \end{psmallmatrix}$ implies the equation
$$
\frac{1}{\sqrt{|L'/L|}}\sum_{\beta \in \mathcal{A}} e^{2\pi i(\beta, \gamma)} =1, \quad \text{for all $\gamma \in \mathcal{A}$}. 
$$
In particular, we have
\begin{enumerate}
    \item $|\mathcal{A}| = \sqrt{|L'/L|}$;
    \item $(\gamma, \beta)=0 \mod 1$, for any $\beta, \gamma \in \cA$. 
\end{enumerate}
Therefore, $L$ and $\mathcal{A}$ span an even unimodular lattice of rank $24$ without $2$-roots, which has to be the Leech lattice, and  $\Delta\phi_0$ is the pullback of the Jacobi theta function of the Leech lattice. This implies that $F$ is the pullback of $\Phi_{12}$.
\end{proof}

\begin{proof}[Proof of Theorem \ref{th:maximal}]
This follows from Lemma \ref{lem:maximal} and Theorem \ref{th:Leech-type}.
\end{proof}

\begin{remark}\label{rk:prime-symmetric}
In Theorem \ref{th:Leech-type}, if $F$ is modular for $\Orth^+(2U\oplus L)$ and $L$ is not unimodular, then $L$ is not of square-free level. Suppose $L$ has square-free level $N>1$.
The above set $\mathcal{A}$ is in fact a subgroup of $L'/L$. Since $N$ divides the order of $\mathcal{A}$, there exist elements of order $N$ in $\mathcal{A}$, and therefore $\mathcal{A}$ has elements of arbitrary order $n$ for $n|N$. Since any two elements of the same norm and order in $L'/L$ are conjugate under $\Orth(L'/L)$ (see e.g. \cite[Proposition 5.1]{Sch15}), $\mathcal{A}$ is the set of elements in $L'/L$ of norm $0\m 1$. By \cite[Propositions 3.1, 3.2, 3.3]{Sch06}, the number of elements in $L'/L$ of norm $0\m 1$ is not equal to $\sqrt{|L'/L|}$, which leads to a contradiction. 

In Theorem \ref{th:Leech-type}, if $F$ is modular for $\Orth^+(2U\oplus L)$, then $L$ is not necessarily the Leech lattice, for example $L$ can be a lattice in the genus of $2E_8\oplus A_1\oplus E_7$. 
\end{remark}

\section{Modular forms for the Weil representation and Jacobi forms}\label{sec:isomorphism}
In \cite{GN98, Gri18} Gritsenko and Nikulin expressed Borcherds products on $2U\oplus L$ in terms of Jacobi forms on $\SL_2(\ZZ)$ based on the isomorphism between modular forms for $\rho_L$ and Jacobi forms on $\SL_2(\ZZ)$. This expression has been used in the proof of Theorem \ref{th:maximal}. In this section we describe an isomorphism between modular forms for $\rho_{U(N)\oplus L}$ and sequences of Jacobi forms of level $\Gamma_1(N)$. We use this isomorphism to control the Fourier--Jacobi expansion of  Borcherds products on lattices of type $U(N)\oplus U\oplus L$. This will be the main tool to prove Theorems \ref{th:prime-level} and  \ref{th:moonshine}.

\subsection{An isomorphism between vector-valued modular forms and Jacobi forms}
Modular forms for the Weil representation associated to $U(N)$ can be described explicitly in terms of scalar-valued modular forms on $\Gamma_1(d)$, where $d$ runs through divisors of $N$. This idea appears as \cite[Lemma 2.6]{Bor95} and in full detail in \cite[Proposition 3.4]{Car12}. The isomorphism extends naturally to Jacobi forms of lattice index as follows.

Let $L$ be an even positive-definite lattice with quadratic form $Q$ and $N$ be a positive integer. Elements of the discriminant group of $U(N) \oplus L$ are written as tuples: 
$$
(a, b, \gamma), \quad a, b \in \mathbb{Z}/N\mathbb{Z}, \; \gamma \in L'/L,
$$ 
such that the quadratic form is $Q((a,b,\gamma))=ab/N+Q(\gamma)$.
The group ring is a tensor product $$\mathbb{C}[(U(N) \oplus L)' / (U(N) \oplus L)] = \mathbb{C}[\mathbb{Z}^2 / N \mathbb{Z}^2] \otimes_{\mathbb{C}} \mathbb{C}[L'/L]$$ with a natural basis of elements of the form $\mathfrak{e}_{(a, b)} \otimes \mathfrak{e}_{\gamma}$.

\begin{theorem}\label{th:iso}
Let $k\in \ZZ$. There is an isomorphism 
$$
\mathbb{J}:\; M^!_{k - \frac{1}{2}\rank(L)}(\rho_{U(N) \oplus L}) \stackrel{\sim}{\longrightarrow} \bigoplus_{d | N} J^!_{k, L}(\Gamma_1(N/d))
$$ 
which sends a vector-valued weakly holomorphic modular form 
$$
F(\tau) = \sum_{n \in \mathbb{Q}} \sum_{a, b \in \mathbb{Z}/N\mathbb{Z}} \sum_{\gamma \in L'/L} c_{(a, b), \gamma}(n) q^n \mathfrak{e}_{(a, b)} \otimes \mathfrak{e}_{\gamma}
$$ 
to the sequence of weakly holomorphic Jacobi forms 
$$
\phi_d(\tau, \mathfrak{z}) = \sum_{n \in \mathbb{Z}} \sum_{\ell \in L'}  \Big( \sum_{a \in \mathbb{Z}/N\mathbb{Z}} e^{2\pi i a d / N} c_{(a, 0), \ell +L}(n - Q(\ell)) \Big) q^n \zeta^{\ell}, \quad d | N.
$$
Moreover, $\mathbb{J}$ induces an isomorphism between the subspaces of holomorphic forms
$$
M_{k - \frac{1}{2}\rank(L)}(\rho_{U(N) \oplus L}) \stackrel{\sim}{\longrightarrow} \bigoplus_{d | N} J_{k, L}(\Gamma_1(N/d)).
$$ 
\end{theorem}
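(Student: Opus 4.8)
The plan is to realize $\mathbb{J}$ as the composition of two simpler isomorphisms: a theta decomposition that strips off the index-$L$ directions, followed by the Borcherds--Carnahan unfolding of the directions coming from $U(N)$. Since $D_{U(N)\oplus L}=D_{U(N)}\oplus D_L$ and $\rho_{U(N)\oplus L}=\rho_{U(N)}\otimes\rho_L$, I would first apply the classical isomorphism \eqref{eq:isomorphism-Jacobi} fiberwise in the $L'/L$-index: writing $F=\sum_{(a,b),\gamma}F_{(a,b),\gamma}(\tau)\,\mathfrak{e}_{(a,b)}\otimes\mathfrak{e}_\gamma$, I set $\Phi_{(a,b)}(\tau,\mathfrak z)=\sum_{\gamma\in L'/L}F_{(a,b),\gamma}(\tau)\,\Theta_{L,\gamma}(\tau,\mathfrak z)$. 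Because $\mathbf{\Theta}_L$ carries exactly the dual representation $\bar\rho_L$ (Example \ref{ex:Theta}), the tuple $(\Phi_{(a,b)})_{(a,b)}$ transforms like a $\mathbb{C}[D_{U(N)}]$-valued index-$L$ Jacobi form for $\rho_{U(N)}$; the shift $n\mapsto n-Q(\ell)$ appearing in the statement is precisely the bookkeeping of multiplication by $\Theta_{L,\gamma}=\sum_{\ell\in L+\gamma}q^{Q(\ell)}\zeta^\ell$. This reduces the theorem to the scalar case $L=0$, i.e.\ to the description of $\rho_{U(N)}$-valued objects in terms of $\bigoplus_{d\mid N}J^!_{k,L}(\Gamma_1(N/d))$, which is the extension to lattice index of the Borcherds--Carnahan statement cited before the theorem.

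For the $U(N)$-part I would analyze the action of $\rho_{U(N)}$ on the row $b=0$. Under $T$ each $\Phi_{(a,0)}$ is invariant because $Q(a,0)=0$, so it has an integral $q$-expansion; under $\Gamma_0(N)$ the row $\{\Phi_{(a,0)}\}_a$ is permuted by the multiplication action $a\mapsto\delta a$ on $\mathbb{Z}/N\mathbb{Z}$ twisted by the character $\chi_{D_{U(N)}}$, while $\rho_{U(N)}(S)$ interchanges the $b=0$ row with the $a=0$ column. The key computation is that forming the discrete Fourier transform $\phi_d=\sum_{a\bmod N}\mathbf{e}(ad/N)\,\Phi_{(a,0)}$ diagonalizes this action and trivializes the character: since $\mathbf{e}((\delta-1)ad/N)=1$ for all $a$ exactly when $\delta\equiv1\pmod{N/d}$, the form $\phi_d$ is stabilized by the corresponding congruence conditions, and after using the full $\SL_2(\mathbb{Z})$-modularity of $F$ (via the $S$-relation) to supply the elements of $\Gamma_1(N/d)$ whose lower-left entry is not divisible by $N$, it lands in $J^!_{k,L}(\Gamma_1(N/d))$. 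The divisor $d$ selects the level precisely because the additive character $a\mapsto\mathbf{e}(ad/N)$ has order $N/d$.

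Finally I would prove bijectivity. For injectivity, the $b=0$ row determines $F$ completely by the Borcherds--Carnahan description of $\rho_{U(N)}$, and the assignment $(\Phi_{(a,0)})_a\mapsto(\phi_d)_{d\mid N}$ is invertible by Fourier inversion on $\mathbb{Z}/N\mathbb{Z}$ once it is reorganized along the divisors $d\mid N$; combined with the theta decomposition of Part~A, which is already a bijection onto $\rho_{U(N)}$-valued index-$L$ Jacobi forms, this gives injectivity of $\mathbb{J}$. For surjectivity, given any tuple $(\phi_d)_{d\mid N}$ with $\phi_d\in J^!_{k,L}(\Gamma_1(N/d))$ I would invert the transform to recover a candidate $b=0$ row and then apply the induction/averaging construction of Theorem \ref{th:scalar-to-vector} (adapted across the divisors $d\mid N$) to fill in all remaining components and produce a genuine $\rho_{U(N)\oplus L}$-modular form reproducing the data. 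The holomorphic statement follows because vanishing of the principal part is detected componentwise, is preserved under both the theta decomposition and the Fourier transform, and translates into holomorphy at every cusp of each $\Gamma_1(N/d)$.

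I expect the main obstacle to be the middle step: pinning down the \emph{exact} level $\Gamma_1(N/d)$ rather than a smaller or larger group. This requires an honest computation of the $\Gamma_0(N)$-action of $\rho_{U(N)}$ on the $b=0$ row with the correct character, together with the verification that the Fourier-transformed combination $\phi_d$ genuinely extends from the $\Gamma_0(N)$-stabilizer to all of $\Gamma_1(N/d)$ using the $S$-relation, and that the resulting count over $a\in\mathbb{Z}/N\mathbb{Z}$ matches the direct sum over divisors $d\mid N$ with no loss and no overcounting.
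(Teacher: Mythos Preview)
Your overall strategy matches the paper's: theta-decompose to a $\rho_{U(N)}$-valued Jacobi form $\Phi=(\Phi_{(a,b)})$, then discrete-Fourier-transform in the $a$-index along the row $b=0$. However, the step you correctly flag as the main obstacle---pinning down the exact level $\Gamma_1(N/d)$ and matching the direct sum over $d\mid N$---is resolved in the paper by a single observation you are circling around but have not stated. In the Fourier-transformed basis
\[
\mathfrak{f}_{(a,b)}=\frac{1}{N}\sum_{c\in\ZZ/N\ZZ}\mathbf{e}(-ac/N)\,\mathfrak{e}_{(c,b)}
\]
one computes on the generators $S,T$ that $\rho_{U(N)}(A)\mathfrak{f}_v=\mathfrak{f}_{Av}$ for \emph{every} $A\in\SL_2(\ZZ)$, where $v\in(\ZZ/N\ZZ)^2$ is a column vector. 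Your $\phi_d$ is precisely the $\mathfrak{f}_{(d,0)}$-component of $\Phi$, so its invariance group is exactly the stabilizer of $(d,0)$ in $(\ZZ/N\ZZ)^2$, which is $\Gamma_1(N/d)$ on the nose. No separate analysis of $T$, of a $\Gamma_0(N)$-permutation action on the $b=0$ row, and of an ad hoc extension ``via the $S$-relation'' is needed; indeed your proposed route through $\Gamma_0(N)$ only reaches $\Gamma_0(N)\cap\Gamma_1(N/d)$ and leaves the extension to all of $\Gamma_1(N/d)$ genuinely unproved.

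The same formula also dispatches bijectivity cleanly via orbit--stabilizer: the vectors $(d,0)$ for $d\mid N$ represent the $\SL_2(\ZZ)$-orbits on $(\ZZ/N\ZZ)^2$, so from a tuple $(\phi_d)_{d\mid N}$ one reconstructs the full vector-valued Jacobi form uniquely as
\[
\Phi(\tau,\mathfrak{z})=\sum_{d\mid N}\;\sum_{A\in\SL_2(\ZZ)/\Gamma_1(N/d)}\bigl(\phi_d\big|_{k,L}A\bigr)\,\mathfrak{f}_{A^{-1}(d,0)},
\]
and then $F$ by theta decomposition. This replaces your vaguer appeal to an adaptation of Theorem \ref{th:scalar-to-vector} and simultaneously explains why the index set is exactly the divisors of $N$ with no loss and no overcount.
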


In the proof it is easier to work in the Fourier-transformed basis 
$$
\mathfrak{f}_{(a, b)} := \frac{1}{N} \sum_{c \in \mathbb{Z}/N\mathbb{Z}} e^{-2\pi i ac / N} \mathfrak{e}_{(c, b)}
$$
of $\mathbb{C}[\mathbb{Z}^2 / N\mathbb{Z}^2]$. The action of the Weil representation for $U(N)$ on this basis is straightforward to work out on the generators: 
$$
\rho_{U(N)}(T) \mathfrak{f}_{(a, b)} = \frac{1}{N} \sum_{c \in \mathbb{Z}/N\mathbb{Z}} e^{-2\pi i ac/N - 2\pi i bc/N} \mathfrak{e}_{(c, b)} = \mathfrak{f}_{(a + b, b)}
$$ 
and 
\begin{align*} 
\rho_{U(N)}(S) \mathfrak{f}_{(a, b)} &= \frac{1}{N} \sum_{c \in \mathbb{Z}/N\mathbb{Z}} e^{-2\pi i ac / N} \frac{1}{N} \sum_{\lambda, \mu \in \mathbb{Z}/N\mathbb{Z}} e^{2\pi i (\lambda b + \mu c) / N} \mathfrak{e}_{(\lambda, \mu)} \\ 
&= \frac{1}{N^2} \sum_{\lambda \in \mathbb{Z}/N\mathbb{Z}} e^{2\pi i \lambda b/N} \sum_{\mu \in \mathbb{Z}/N\mathbb{Z}} \Big( \sum_{c \in \mathbb{Z}/N\mathbb{Z}} e^{2\pi i c (\mu - a) / N} \Big) \mathfrak{e}_{(\lambda, \mu)} \\ 
&= \frac{1}{N} \sum_{\lambda \in \mathbb{Z}/N\mathbb{Z}} e^{2\pi i \lambda b/N} \mathfrak{e}_{(\lambda, a)} \\ 
&= \mathfrak{f}_{(-b, a)}
\end{align*}
and therefore (we view $v$ as a column vector.)
$$
\rho_{U(N)}(A) \mathfrak{f}_v = \mathfrak{f}_{Av}, \quad  \text{for all $A \in \mathrm{SL}_2(\mathbb{Z})$ and $v \in \mathbb{Z}^2 / N\mathbb{Z}^2$}.
$$
Taking the inverse Fourier transform, we have
$$
\mathfrak{e}_{(a, b)} = \sum_{c \in \mathbb{Z}/N\mathbb{Z}} e^{2\pi i ac / N} \mathfrak{f}_{(c, b)}.
$$

If the $F$ is decomposed into $\mathfrak{f}$-components $$F(\tau) = \sum_{a, b \in \mathbb{Z}/N\mathbb{Z}} \sum_{\gamma \in L'/L} f_{(a, b), \gamma}(\tau) \cdot \mathfrak{f}_{(a, b)} \otimes \mathfrak{e}_{\gamma},$$ then the Jacobi forms that make up $\mathbb{J}(F)$ are simply \begin{equation}\label{eq:ft-J}
\phi_d(\tau, \mathfrak{z}) = \sum_{\gamma \in L'/L} f_{(d, 0), \gamma}(\tau) \Theta_{L, \gamma}(\tau, \mathfrak{z})
\end{equation}
where $\Theta_{L, \gamma}(\tau, \mathfrak{z}) = \sum_{\ell \in L + \gamma} q^{Q(\ell)} e^{2\pi i ( \ell, \mathfrak{z} )}$ (as in Example \ref{ex:Theta}).

\begin{proof} 
(i) Define the linear map 
\begin{align*}
\mathrm{Tr}: \quad &\mathbb{C}[(U(N) \oplus L \oplus L)' / (U(N) \oplus L\oplus L)]  \rightarrow \mathbb{C}[\mathbb{Z}^2 / N\mathbb{Z}^2], \\
&\mathfrak{e}_{(a, b)} \otimes \mathfrak{e}_{\gamma} \otimes \mathfrak{e}_{\delta} \mapsto 
\begin{cases} 
\mathfrak{e}_{(a, b)}: & \gamma = \delta; \\ 0: & \text{otherwise}, \end{cases}
\end{align*}
for $a, b \in \mathbb{Z}/N\mathbb{Z}$ and $\gamma, \delta \in L'/L$. This map satisfies 
$$
\rho_{U(N)}(A)  \circ \mathrm{Tr} = \mathrm{Tr} \circ (\rho_{U(N) \oplus L} \otimes \bar{\rho}_L)(A), \quad A\in \mathrm{Mp}_2(\mathbb{Z}).
$$ 
This relation together with Example \ref{ex:Theta} shows that 
$$
\Phi(\tau, \mathfrak{z}) = \mathrm{Tr}(F \otimes \mathbf{\Theta}_L) = \sum_{n \in \mathbb{Q}} \sum_{a, b \in \mathbb{Z}/N\mathbb{Z}} \sum_{\ell \in L'} c_{(a, b), \ell+L}(n - Q(\ell)) q^n \zeta^{\ell} \mathfrak{e}_{(a, b)}
$$ 
is a vector-valued Jacobi form of weight $k$, index $L$ and multiplier $\rho_{U(N)}$ for $\SL_2(\ZZ)$: 
$$
\Phi\Big|_{k,L}A = \rho_{U(N)}(A)\Phi,  \quad A \in \SL_2(\ZZ).
$$

(ii) We express $\Phi$ in coordinates with respect to the $\mathfrak{f}$-basis:
\begin{align*} 
\Phi(\tau, \mathfrak{z}) &= \sum_{n \in \mathbb{Q}} \sum_{a, b, c \in \mathbb{Z}/N\mathbb{Z}} e^{2\pi i ac / N} \sum_{\ell \in L'} c_{(a, b), \ell+L}(n - Q(\ell)) q^n \zeta^{\ell} \mathfrak{f}_{(c, b)} \\ 
&= \sum_{c, b \in \mathbb{Z}/N\ZZ} \sum_{n \in \mathbb{Q}} \sum_{\ell \in L'} \Big( \sum_{a \in \mathbb{Z}/N\mathbb{Z}} e^{2\pi i ac / N} c_{(a, b), \ell+L}(n - Q(\ell)) \Big) q^n \zeta^{\ell} \mathfrak{f}_{(c, b)}.  
\end{align*} 
If $d | N$, then $\rho_{U(N)}(A) \mathfrak{f}_{(d, 0)}= \mathfrak{f}_{A(d,0)} = \mathfrak{f}_{(d, 0)}$ for all $A \in \Gamma_1(N/d)$, and therefore the $\mathfrak{f}_{(d, 0)}$-component $\phi_d$ of $\Phi$ is a Jacobi form of weight $k$, trivial character and index $L$ on $\Gamma_1(N/d)$. 

(iii) We have to show that the map $\mathbb{J}$ is a bijection. Suppose we are given a family $(\phi_d)_{d | N}$ of Jacobi forms of weight $k$ and index $L$, each $\phi_d$ of level $\Gamma_1(N/d)$. Since $(d, 0)$, $d | N$ represent the orbits of $\mathrm{SL}_2(\mathbb{Z})$ acting on $\mathbb{Z}^2 / N \mathbb{Z}^2$ (see also \cite[Lemma 3.2]{Car12}), we can construct a vector-valued Jacobi form $\Phi$ whose $\mathfrak{f}_{(d, 0)}$-component is $\phi_d$ in only way, namely by writing 
$$
\Phi(\tau, \mathfrak{z}) = \sum_{d | N} \sum_{A \in \mathrm{SL}_2(\mathbb{Z}) / \Gamma_1(N/d)} \Big( \phi_d \Big|_{k, L} A \Big) \mathfrak{f}_{A^{-1}(d, 0)}.
$$ 
Then for any $B \in \mathrm{SL}_2(\mathbb{Z})$ we have
\begin{align*} 
\Phi \Big|_{k, L} B (\tau, \mathfrak{z}) &= \sum_{d | N} \sum_{A \in \mathrm{SL}_2(\mathbb{Z}) / \Gamma_1(N/d)} \Big( \phi_d \Big|_{k, L} (A B) \Big) \mathfrak{f}_{A^{-1}(d, 0)} \\ 
&= \sum_{d | N} \sum_{A \in \mathrm{SL}_2(\mathbb{Z}) / \Gamma_1(N/d)} \Big( \phi_d \Big|_{k, L} A \Big) \mathfrak{f}_{BA^{-1}(d, 0)} \\ 
&= \rho_{U(N)}(B) \Phi(\tau, \mathfrak{z}). 
\end{align*} 
This uniquely determines the vector-valued modular form $F$ as the theta decomposition of $\Phi$. To be more precise, for any $a,b\in \ZZ$, there exists $A\in \SL_2(\ZZ)$ such that $A(a,b)=(d,0) \mod N$ for $d:=(a,b,N)$ (i.e. the greatest common divisor of $a$, $b$ and $N$).  We then write 
$$
\phi_d \Big|_{k,L} A (\tau,\mathfrak{z}) = \sum_{n,\ell} c_{d;A}(n,\ell) q^n\zeta^{\ell}. 
$$
For any $\gamma \in L'/L$ we fix a vector $\ell \in \gamma + L$ and define 
$$
f_{(a,b), \gamma}(\tau) = \sum_{n} c_{d;A}(n+Q(\ell),\ell)q^n. 
$$
Then the preimage of $(\phi_d)_{d|N}$ under $\mathbb{J}$ is
\[
F(\tau)=\sum_{a,b \in \ZZ / N\ZZ} \sum_{\gamma \in L'/L} f_{(a,b), \gamma}(\tau) \mathfrak{f}_{(a,b)}\otimes \mathfrak{e}_\gamma.  
\qedhere\]
\end{proof}

The group $(\mathbb{Z}/N\mathbb{Z})^{\times}$ acts on the discriminant form of $U(N)$ by automorphisms:
$$
u \cdot \mathfrak{e}_{(a, b)} = \mathfrak{e}_{(ua, u^{-1} b)}, \quad a,b \in \mathbb{Z}/N\mathbb{Z}, \quad u \in (\mathbb{Z}/N\mathbb{Z})^{\times}.
$$ 
The associated action on the $\mathfrak{f}$-basis is 
$$
u \cdot \mathfrak{f}_{(a, b)} = \frac{1}{N} \sum_{c \in \mathbb{Z}/N\mathbb{Z}} e^{-2\pi i ac/N} \mathfrak{e}_{(uc, u^{-1}b)} = \mathfrak{f}_{(u^{-1}a, u^{-1}b)}.
$$ 
We extend this action to modular forms for $\rho_{U(N) \oplus L}$ by defining 
$$
u \cdot F(\tau) = \sum_{n \in \mathbb{Q}} \sum_{a,b \in \mathbb{Z}/N\mathbb{Z}} \sum_{\gamma \in L'/L} c_{(a, b), \gamma}(n) q^n \mathfrak{e}_{(ua, u^{-1}b)} \otimes \mathfrak{e}_{\gamma}
$$ for
$$
F(\tau) = \sum_{n \in \mathbb{Q}} \sum_{a,b \in \mathbb{Z}/N\mathbb{Z}} \sum_{\gamma \in L'/L} c_{(a, b), \gamma}(n) q^n \mathfrak{e}_{(a, b)} \otimes \mathfrak{e}_{\gamma}.
$$
Let $M^!_{k}(\rho_{U(N) \oplus L})^{(\mathbb{Z}/N\mathbb{Z})^{\times}}$ denote the invariant subspace of $M^!_{k}(\rho_{U(N) \oplus L})$ with respect to the action of $(\ZZ/N\ZZ)^\times$. 
It is easy to check the following.
\begin{corollary}
The map $\mathbb{J}$ restricts to an isomorphism 
$$
M^!_{k - \frac{1}{2}\rank(L)}(\rho_{U(N) \oplus L})^{(\mathbb{Z}/N\mathbb{Z})^{\times}} \stackrel{\sim}{\longrightarrow} \bigoplus_{d | N} J^!_{k, L}(\Gamma_0(N/d)).
$$
\end{corollary}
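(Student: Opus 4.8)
The plan is to transport the $(\mathbb{Z}/N\mathbb{Z})^{\times}$-action through the Fourier-transformed $\mathfrak{f}$-basis used in the proof of Theorem~\ref{th:iso}, where it becomes a simple scaling of indices, and to match it with the diamond operators that generate $\Gamma_0(N/d)/\Gamma_1(N/d)$. Recall that $\mathbb{J}(F)=(\phi_d)_{d\mid N}$, where $\phi_d$ is the $\mathfrak{f}_{(d,0)}$-component of the vector-valued Jacobi form $\Phi=\mathrm{Tr}(F\otimes\mathbf{\Theta}_L)$, and that $\Phi\big|_{k,L}A=\rho_{U(N)}(A)\Phi$ for $A\in\SL_2(\ZZ)$. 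Since $u$ acts trivially on $L'/L$, the trace map commutes with the $u$-action, and because the theta decomposition $F\leftrightarrow\Phi$ is a bijection, $F$ is $(\mathbb{Z}/N\mathbb{Z})^{\times}$-invariant if and only if $\Phi$ is. In the $\mathfrak{f}$-basis the action reads $u\cdot\Phi=\sum_{w}\phi^{(uw)}\mathfrak{f}_w$, where $\phi^{(v)}$ denotes the $\mathfrak{f}_v$-component of $\Phi$; thus invariance of $\Phi$ amounts to $\phi^{(uv)}=\phi^{(v)}$ for all units $u$ and all $v\in\ZZ^2/N\ZZ^2$.

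The link between the components of $\Phi$ and the transformation behaviour of the $\phi_d$ is the identity $\phi_d\big|_{k,L}A=\phi^{(A^{-1}(d,0))}$ for all $A\in\SL_2(\ZZ)$, which is exactly how the inverse of $\mathbb{J}$ was built. The key computation is then purely on indices: writing $A=\begin{psmallmatrix}\alpha&\beta\\\gamma&\delta\end{psmallmatrix}\in\Gamma_0(N/d)$ and letting $u$ be the reduction $\alpha\bmod (N/d)$, one has $\gamma d\equiv0$ and $\delta d\equiv u^{-1}d\pmod N$, so that $A^{-1}(d,0)=(u^{-1}d,0)$ and hence $\phi_d\big|_{k,L}A=\phi^{(u^{-1}(d,0))}$. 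This identifies $\Gamma_0(N/d)$-invariance of $\phi_d$ with scaling-invariance of $\Phi$ along the components $(u^{-1}d,0)$, using that every unit modulo $N/d$ lifts to a unit modulo $N$.

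From here both implications follow. If $F$, equivalently $\Phi$, is $(\mathbb{Z}/N\mathbb{Z})^{\times}$-invariant, then $\phi^{(u^{-1}(d,0))}=\phi^{(d,0)}=\phi_d$, so $\phi_d\big|_{k,L}A=\phi_d$ for every $A\in\Gamma_0(N/d)$ and each $\phi_d$ lies in $J^!_{k,L}(\Gamma_0(N/d))$. Conversely, suppose each $\phi_d$ is $\Gamma_0(N/d)$-invariant. Given any index $v$, set $d=\gcd(v,N)$ and write $v=A^{-1}(d,0)$ with $A\in\SL_2(\ZZ)$, so $\phi^{(v)}=\phi_d\big|_{k,L}A$. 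Choose a lift $D_u\in\SL_2(\ZZ)$ of $\mathrm{diag}(u,u^{-1})\bmod N$, which exists by surjectivity of $\SL_2(\ZZ)\to\SL_2(\ZZ/N\ZZ)$; since scalar multiplication by $u$ commutes with $A^{-1}$, we get $uv=A^{-1}D_u(d,0)=(D_u^{-1}A)^{-1}(d,0)$, whence $\phi^{(uv)}=\phi_d\big|_{k,L}(D_u^{-1}A)$. As $D_u^{-1}\in\Gamma_0(N)\subseteq\Gamma_0(N/d)$, invariance gives $\phi_d\big|_{k,L}D_u^{-1}=\phi_d$ and therefore $\phi^{(uv)}=\phi_d\big|_{k,L}A=\phi^{(v)}$, so $\Phi$ is invariant. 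The main obstacle I expect is the index bookkeeping in the converse: one must treat all orbit representatives $d\mid N$ uniformly and reconcile the global scaling by $u\in(\mathbb{Z}/N\mathbb{Z})^{\times}$ with the diamond operators $\Gamma_0(N/d)/\Gamma_1(N/d)\cong(\mathbb{Z}/(N/d)\mathbb{Z})^{\times}$, which requires passing carefully between reductions modulo $N$ and modulo $N/d$.
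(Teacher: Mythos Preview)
Your proof is correct and is exactly the verification the paper has in mind. The paper itself offers no argument beyond ``It is easy to check the following,'' having just computed $u\cdot\mathfrak{f}_{(a,b)}=\mathfrak{f}_{(u^{-1}a,u^{-1}b)}$; your proposal simply carries out that check, matching the scaling action on $\mathfrak{f}$-indices with the diamond action of $\Gamma_0(N/d)/\Gamma_1(N/d)$ via the identity $\phi_d\big|_{k,L}A=\phi^{(A^{-1}(d,0))}$ from the construction of $\mathbb{J}^{-1}$.
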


We will now consider how the isomorphism $\mathbb{J}$ behaves with respect to the rationality of Fourier coefficients.

\begin{remark}
It is possible for $\mathbb{J}(F)$ to have an irrational Fourier expansion at infinity even if $F$ has only rational Fourier coefficients.  For example, $M=U(3)\oplus U\oplus A_2$ is a simple lattice and there exists a unique form $F \in M_{-1}^!(\rho_M)$ with rational Fourier expansion and principal part 
$$
q^{-1/3}\mathfrak{e}_{(1,0)}\otimes \mathfrak{e}_{\gamma} + q^{-1/3}\mathfrak{e}_{(-1,0)}\otimes \mathfrak{e}_{-\gamma} + 6\mathfrak{e}_{(0,0)}\otimes\mathfrak{e}_0,
$$
where $0\neq \gamma \in A_2'/A_2$.  If $\ell \in \gamma + A_2$ with $\ell^2=2/3$, then $q^0\zeta^\ell$ has coefficient $e^{2\pi i /3}$ in the Fourier expansion of $\phi_1$ at infinity. 

Conversely, it is possible for $\mathbb{J}^{-1}((\phi_d)_{d|N})$ to have irrational Fourier coefficients even if all $\phi_d$ have rational Fourier expansions at infinity. The lattice $M$ yields examples of this as well.
\end{remark}

\begin{lemma}\label{lem:rationality-1}
If $F\in M^!_{k - \rank(L)/2}(\rho_{U(N) \oplus L})^{(\mathbb{Z}/N\mathbb{Z})^{\times}}$ has integral Fourier expansion, then $\mathbb{J}(F)$ also has integral Fourier expansion at infinity.
\end{lemma}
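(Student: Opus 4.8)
The plan is to reduce the statement to the integrality of Ramanujan sums. By the explicit formula for $\mathbb{J}(F)$ in Theorem \ref{th:iso}, the coefficient of $q^n\zeta^\ell$ in the Jacobi form $\phi_d$ is the exponential sum
$$
\sum_{a\in\ZZ/N\ZZ} e^{2\pi i ad/N}\, c_{(a,0),\,\ell+L}(n-Q(\ell)),
$$
so, granting that every individual coefficient $c_{(a,0),\gamma}(m)$ lies in $\ZZ$, what must be shown is that this weighted sum over $a$ is again an integer. The roots of unity $e^{2\pi i ad/N}$ are what could \emph{a priori} destroy integrality, and the whole point is that the $(\ZZ/N\ZZ)^\times$-invariance forces them to recombine into an integer.

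First I would record the constraint imposed by invariance. Unwinding the definition $u\cdot\mathfrak{e}_{(a,b)}=\mathfrak{e}_{(ua,u^{-1}b)}$ and comparing coefficients in the identity $u\cdot F=F$ gives
$$
c_{(u^{-1}a,\,ub),\,\gamma}(m)=c_{(a,b),\,\gamma}(m),\qquad u\in(\ZZ/N\ZZ)^\times.
$$
Specializing to $b=0$ shows that $c_{(a,0),\gamma}(m)$ depends on $a$ only through $\gcd(a,N)$, because the orbit of $a$ under multiplication by units is exactly the set of residues sharing its greatest common divisor with $N$ (here one uses that the reduction $(\ZZ/N\ZZ)^\times\to(\ZZ/m\ZZ)^\times$ is surjective for every $m\mid N$). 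I would then group the summation over $a$ by the value $e:=\gcd(a,N)$ and denote by $\kappa_e$ the common value of $c_{(a,0),\gamma}(m)$ on each such group.

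Writing $a=eb$ with $b$ coprime to $N/e$, the grouped sum becomes
$$
\sum_{e\mid N}\kappa_e\sum_{\substack{b\bmod N/e\\ \gcd(b,N/e)=1}} e^{2\pi i bd/(N/e)}.
$$
The inner sum is the Ramanujan sum attached to the modulus $N/e$, which is an integer by the classical identity $\sum_{t\mid\gcd(N/e,\,d)} t\,\mu\!\big((N/e)/t\big)$. Since each $\kappa_e$ is an integer by hypothesis, the entire expression lies in $\ZZ$, and the same holds for every Fourier coefficient of every $\phi_d$, which is the assertion.

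I do not anticipate a genuine obstacle: the lemma is essentially a bookkeeping argument, and the only substantive observation is that after applying invariance the character sum collapses to a Ramanujan sum. The one place to take some care is the orbit count used to group the $a$'s, namely the verification that the residues $a$ with fixed $\gcd(a,N)=e$ are exactly $\{eb : \gcd(b,N/e)=1\}$, but this is standard.
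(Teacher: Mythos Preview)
Your proposal is correct and follows essentially the same approach as the paper: group the sum over $a$ by $\gcd(a,N)$ using the $(\ZZ/N\ZZ)^\times$-invariance, then recognize the inner sum as a Ramanujan sum and invoke its integrality via the divisor-sum formula. The paper's proof is identical up to notation (it uses $c$ for your $e$ and $t$ for your $b$).
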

\begin{proof}
For each $d|N$, the coefficient $\phi_d(n,\ell)$ of $q^n\zeta^\ell$ in the Fourier expansion of $\phi_d$ at infinity is 
\begin{align*}
&\sum_{a \in \mathbb{Z}/N\mathbb{Z}} e^{2\pi i a d / N} c_{(a, 0), \ell+L}(n - Q(\ell)) \\
=&\sum_{c|N} \sum_{\substack{(t,N/c)=1\\ t \m N/c}}e^{2\pi i tc d / N} c_{(t c, 0), \ell+L}(n - Q(\ell))\\
=&\sum_{c|N} c_{(c, 0), \ell+L}(n - Q(\ell)) \sum_{\substack{(t,N/c)=1\\ t \m N/c}}e^{2\pi i tc d / N},
\end{align*}
where we use $c_{(c,0),\ell}=c_{(tc,0),\ell}$ because there exists $u\in (\ZZ / N\ZZ)^\times $ such that $uc=tc \mod N$ and $u\cdot F = F$. Using the Ramanujan sum
$$
R(a,n):=\sum_{\substack{c\in \ZZ / n\ZZ\\ (c,n)=1}} e^{2\pi i ac/n} = \sum_{t|(a,n)} t \mu(n/t),
$$
where $\mu$ is the M\"obius function, we see that the coefficient $$\phi_d(n,\ell) = \sum_{c | N} \sum_{t | (d, N/c)} t \mu(N/ct) c_{(c,0), \ell+L}(n-Q(\ell))$$ is integral. 
\end{proof}

\begin{lemma}\label{lem:unique}
A form $F \in M^!_{k-\rank(L)/2}(\rho_{U(N)\oplus L})$ is uniquely determined by its components of type $\mathfrak{e}_{(a,0)}\otimes\mathfrak{e}_\gamma$ for all $a\in \ZZ/N\ZZ$ and $\gamma \in L'/L$. In particular, if all components $\mathfrak{e}_{(a,0)}\otimes \mathfrak{e}_\gamma$ have rational Fourier coefficients, then all Fourier coefficients of $F$ are rational. 
\end{lemma}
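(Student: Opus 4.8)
The plan is to treat the two assertions in turn: the unique determination rests on the $\SL_2(\ZZ)$-equivariance already exploited in Theorem \ref{th:iso}, while the rationality statement comes from a descent argument whose decisive point is that reading off $\mathfrak{e}$-components is an operation defined over $\QQ$. For the first assertion I would work in the $\mathfrak{f}$-basis, where $\rho_{U(N)}(A)\mathfrak{f}_v = \mathfrak{f}_{Av}$ for all $A\in\SL_2(\ZZ)$, so that the transformation law $F\big|_k A = \rho_{U(N)\oplus L}(A)F$ expresses, component by component, the $\mathfrak{f}_{Av}$-component of $F$ as a $\rho_L$-twisted slash of its $\mathfrak{f}_v$-component. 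Since the vectors $(d,0)$ with $d\mid N$ represent the $\SL_2(\ZZ)$-orbits on $\ZZ^2/N\ZZ^2$ (\cite[Lemma 3.2]{Car12}), every $\mathfrak{f}_{(a,b)}$-component of $F$ is determined by the components $\mathfrak{f}_{(d,0)}$, $d\mid N$. These have second coordinate $0$, hence lie in the span of $\{\mathfrak{e}_{(c,0)}:c\in\ZZ/N\ZZ\}$; consequently $F$ is determined by its $\mathfrak{e}_{(a,0)}\otimes\mathfrak{e}_\gamma$-components, and the restriction map $r$ reading off these components is injective.

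For the rationality statement the essential observation is that $r$, in contrast to the isomorphism $\mathbb{J}$, is defined over $\QQ$: it merely selects a subset of the coefficients $c_{(a,b),\gamma}(n)$ recorded in the $\mathfrak{e}$-basis, whereas extracting the $\mathfrak{f}_{(d,0)}$-components for $d\mid N$ mixes them with the roots of unity $e^{-2\pi i ac/N}$. This is precisely the asymmetry behind the Remark, in which rational $\mathbb{J}(F)$ fails to force rational $F$. I would then set $V = M^!_{k-\rank(L)/2}(\rho_{U(N)\oplus L})$ and let $V_\QQ\subseteq V$ be the subspace of forms with rational Fourier coefficients. By the rationality of modular forms for the Weil representation (McGraw), applied after clearing the pole at $\infty$ with a power of the rational scalar form $\Delta$ and dividing back by it, one has $V = V_\QQ\otimes_\QQ\CC$. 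Writing $W_\QQ\subseteq W$ for the corresponding rational and complex spaces of $\mathfrak{e}_{(a,0)}$-component tuples, the map $r$ is $\CC$-linear and injective, with $r(V_\QQ)\subseteq W_\QQ$.

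It then remains to carry out a standard descent. Put $U_\QQ := r(V_\QQ)\subseteq W_\QQ$, so that $r(V) = U_\QQ\otimes_\QQ\CC$ by $\CC$-linearity. For a $\QQ$-subspace one always has $(U_\QQ\otimes_\QQ\CC)\cap W_\QQ = U_\QQ$, so if $F\in V$ has $r(F)\in W_\QQ$, then $r(F)\in U_\QQ = r(V_\QQ)$; choosing $F'\in V_\QQ$ with $r(F')=r(F)$ and invoking the injectivity of $r$ gives $F=F'\in V_\QQ$, i.e.\ $F$ has rational Fourier coefficients. I expect the main obstacle to lie in the rationality half: one must secure the $\QQ$-structure on the weakly holomorphic space, and, more conceptually, recognize that it is the $\QQ$-rationality together with the injectivity of the restriction to the full family of $\mathfrak{e}_{(a,0)}$-components — and not the coarser, only $\QQ(\zeta_N)$-rational $\mathbb{J}$-data — that makes the descent valid.
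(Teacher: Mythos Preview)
Your proposal is correct and follows essentially the same route as the paper. For the first assertion the paper simply cites Theorem~\ref{th:iso}, which your argument unpacks; for the rationality you both invoke McGraw's rational basis (extended to weakly holomorphic forms via $\Delta^{-n}$) together with the injectivity of the restriction $r$ to the $\mathfrak{e}_{(a,0)}\otimes\mathfrak{e}_\gamma$-components, the paper phrasing the descent concretely as ``the $\CC$-coefficients of $F$ in this rational basis are determined by rational data, hence are rational'' and you phrasing it abstractly via $(U_\QQ\otimes_\QQ\CC)\cap W_\QQ = U_\QQ$.
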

\begin{proof}
The first claim follows from Theorem \ref{th:iso}. Suppose all components of $\mathfrak{e}_{(a,0)} \otimes \mathfrak{e}_{\gamma}$ in $F$ have rational Fourier coefficients. By \cite{McG03}, the vector spaces $M_{k-\rank(L)/2}(\rho_{U(N)\oplus L})$ have bases with rational Fourier coefficients. It follows that $$M^!_{k-\rank(L)/2}(\rho_{U(N) \oplus L}) = \bigoplus_{n=0}^{\infty} \Delta^{-n} M_{k+12n - \rank(L)/2}(\rho_{U(N) \oplus L})$$ also has a basis with rational coefficients, say ${f_i}$. The expression of $F$ as a $\CC$-linear combination of these $f_i$ is determined by its components of type $\mathfrak{e}_{(a,0)}\otimes \mathfrak{e}_\gamma$; therefore, $F$ is a $\QQ$-linear combination of the $f_i$ and itself has rational Fourier coefficients. 
\end{proof}

\begin{lemma}\label{lem:rationality-2}
Let $(\phi_d)_{d|N} \in \bigoplus_{d|N}J_{k,L}^!(\Gamma_0(N/d))$. If every $\phi_d$ has rational Fourier expansion at infinity, then $\mathbb{J}^{-1}((\phi_d)_{d|N})$ also has rational Fourier expansion. 
\end{lemma}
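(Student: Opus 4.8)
The plan is to reduce the statement to the rationality criterion already established in Lemma~\ref{lem:unique}: it suffices to show that each component of type $\mathfrak{e}_{(a,0)}\otimes\mathfrak{e}_\gamma$ of $F:=\mathbb{J}^{-1}((\phi_d)_{d|N})$ has rational Fourier coefficients. Since the input $(\phi_d)_{d|N}$ lies in $\bigoplus_{d|N}J_{k,L}^!(\Gamma_0(N/d))$, the corollary following Theorem~\ref{th:iso} shows that $F$ belongs to the $(\mathbb{Z}/N\mathbb{Z})^{\times}$-invariant subspace $M^!_{k-\rank(L)/2}(\rho_{U(N)\oplus L})^{(\mathbb{Z}/N\mathbb{Z})^{\times}}$. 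In particular the coefficients $c_{(a,0),\gamma}(n)$ depend only on $\gcd(a,N)$, which is the structural fact that will let me invert the transform below.

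First I would extend the given family to all residues. For arbitrary $d\in\mathbb{Z}/N\mathbb{Z}$ define $\phi_d$ by the same formula as in Theorem~\ref{th:iso}, so that its Fourier coefficients are $\phi_d(n,\ell)=\sum_{a\in\mathbb{Z}/N\mathbb{Z}}e^{2\pi i ad/N}c_{(a,0),\ell+L}(n-Q(\ell))$; this is a genuine discrete Fourier transform over $\mathbb{Z}/N\mathbb{Z}$. Using the unit-invariance of $F$ together with the substitution $a\mapsto u^{-1}a$, one checks that $\phi_{ud}=\phi_d$ for every $u\in(\mathbb{Z}/N\mathbb{Z})^{\times}$. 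Because the reduction $(\mathbb{Z}/N\mathbb{Z})^{\times}\to(\mathbb{Z}/(N/c)\mathbb{Z})^{\times}$ is surjective, any $d$ with $\gcd(d,N)=c$ lies in the unit-orbit of $c$, and hence $\phi_d=\phi_{\gcd(d,N)}$. Thus every $\phi_d$ with $d\in\mathbb{Z}/N\mathbb{Z}$ coincides with one of the given rational Jacobi forms $\phi_c$, $c\mid N$, and so has rational Fourier coefficients.

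Next I would invert the Fourier transform. The inverse DFT gives $c_{(a,0),\ell+L}(n-Q(\ell))=\tfrac1N\sum_{d\in\mathbb{Z}/N\mathbb{Z}}e^{-2\pi i ad/N}\phi_d(n,\ell)$. Grouping the sum according to $c=\gcd(d,N)$ and writing $d=cs$ with $\gcd(s,N/c)=1$ converts the inner exponential sum into the Ramanujan sum $R(a,N/c)$ from Lemma~\ref{lem:rationality-1}, yielding $c_{(a,0),\ell+L}(n-Q(\ell))=\tfrac1N\sum_{c\mid N}R(a,N/c)\,\phi_c(n,\ell)$. Since the Ramanujan sums are integers and each $\phi_c(n,\ell)$ is rational, the right-hand side is rational. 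Hence every component of type $\mathfrak{e}_{(a,0)}\otimes\mathfrak{e}_\gamma$ of $F$ has rational Fourier coefficients, and Lemma~\ref{lem:unique} then forces all Fourier coefficients of $F$ to be rational, completing the argument.

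The only genuinely delicate point is the inversion step, or equivalently the assertion that the finitely many forms $\phi_d$ indexed by $d\mid N$ already pin down all $N$ values of the discrete Fourier transform. This is exactly what the $(\mathbb{Z}/N\mathbb{Z})^{\times}$-invariance supplies, through the orbit identity $\phi_d=\phi_{\gcd(d,N)}$; once that identity is in place the inverse DFT and the Ramanujan-sum rewriting make the rationality transparent, the denominator $N$ being harmless because it multiplies an integral combination of rational numbers. Everything else is the routine bookkeeping of passing between the $\mathfrak{e}$- and $\mathfrak{f}$-bases that was already set up in the proof of Theorem~\ref{th:iso}.
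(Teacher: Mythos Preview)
Your proposal is correct and follows essentially the same route as the paper: reduce via Lemma~\ref{lem:unique} to the components of type $\mathfrak{e}_{(a,0)}\otimes\mathfrak{e}_\gamma$, invert the discrete Fourier transform over $\mathbb{Z}/N\mathbb{Z}$, and collapse the resulting sum into Ramanujan sums to see rationality. The only cosmetic difference is that where the paper identifies the $\mathfrak{f}_{(c,0)}$-component with $\phi_{(c,N)}$ by invoking the $\Gamma_0(N/(c,N))$-invariance of $\phi_{(c,N)}$ directly, you obtain the same identification $\phi_d=\phi_{\gcd(d,N)}$ from the $(\mathbb{Z}/N\mathbb{Z})^{\times}$-invariance of $F$; these are two phrasings of the same fact under the isomorphism of Theorem~\ref{th:iso} and its corollary.
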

\begin{proof}
By Lemma \ref{lem:unique}, it suffices to prove that all components $\mathfrak{e}_{(a,0)}\otimes \mathfrak{e}_\gamma$ of $\mathbb{J}^{-1}((\phi_d)_{d|N})$ have rational Fourier expansions. We find that the Fourier coefficient of such a component is
\begin{align*}
c_{(a,0), \gamma}(n) &= \frac{1}{N}\sum_{c \m N} e^{-2\pi iac/N} f_{(c,0), \gamma}(n) \\
&= \frac{1}{N}\sum_{c \m N} e^{-2\pi iac/N} \left(\phi_{(c,N)}\Big|_{k,L}A_c\right)(n+Q(\ell), \ell)\\
&= \frac{1}{N}\sum_{c \m N} e^{-2\pi iac/N} \phi_{(c,N)}(n+Q(\ell), \ell)
\end{align*}
\begin{align*}
&= \frac{1}{N}\sum_{d|N} \phi_d(n+Q(\ell),\ell) \sum_{\substack{(c,N/d)=1\\ c \m N/d}} e^{-2\pi iacd/N}\\
&= \frac{1}{N}\sum_{d|N} \phi_d(n+Q(\ell),\ell) R(a,N/d) \in \QQ.
\end{align*}
Here, $\ell \in \gamma +L$ is any vector and $\phi(n,\ell)$ denotes the coefficient of $q^n\zeta^\ell$ in the Fourier expansion of $\phi$ at infinity, and $A_c \in \Gamma_0(N/d)$ is any matrix satisfying $A_c(c,0)=(d,0) \m N$ for $d=(c,N)$.
\end{proof}

\begin{remark}
In Lemma \ref{lem:rationality-2}, it is possible for $\mathbb{J}^{-1}((\phi_d)_{d|N})$ to have a non-trivial denominator even if $(\phi_d)_{d|N}$ has integral Fourier expansion at every cusp. For example, let $N=p$ be prime. Suppose $\phi_1 \in J_{k,L}^!(\SL_2(\ZZ))$ and $\phi_p=0$. Then the coefficient of $q^0\mathfrak{e}_{(0,0)}\otimes\mathfrak{e}_0$ in the Fourier expansion of $\mathbb{J}^{-1}((\phi_d)_{d|N})$ is $\frac{p-1}{p}\phi_1(0,0)$.
\end{remark}

Vector-valued modular forms $F_{\Gamma_0(N), f, 0}$ constructed by averaging a scalar modular form $f$ as in Theorem \ref{th:scalar-to-vector} correspond under $\mathbb{J}$ to sequences $(\phi_d)_{d|N}$ in which each $\phi_d$, $d>1$ is the trace of $\phi_1$ over $\Gamma_0(N/d) / \Gamma_0(N)$:

\begin{lemma}\label{lem:special-input}
Suppose $M=U(N_1)\oplus U \oplus L$ has level $N$. If $f$ is a scalar-valued weakly holomorphic modular form of weight $k-\rank(L)/2$ and character $\chi_{D_M}$ on $\widetilde{\Gamma}_0(N)$,  then 
$\mathbb{J}(F_{\Gamma_0(N), f, 0}) = (\phi_d)_{d|N_1}$ is 
\begin{align*}
\phi_d(\tau,\mathfrak{z})&=\sum_{A\in \Gamma_0(N_1/d) / \Gamma_0(N)} \psi_1\Big|_{k,L}A (\tau,\mathfrak{z}), \quad d|N_1,
\end{align*}
where
$$
\psi_1(\tau,\mathfrak{z})=f(\tau)\Theta_{L,0}(\tau, \mathfrak{z}).
$$
\end{lemma}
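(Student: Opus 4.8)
The plan is to compute $\mathbb{J}(F_{\Gamma_0(N),f,0})$ directly from the construction used in the proof of Theorem \ref{th:iso}. By \eqref{eq:ft-J}, for any $F\in M^!_{k-\frac{1}{2}\rank(L)}(\rho_{U(N_1)\oplus L})$ the $d$-th Jacobi form attached to $F$ is the $\mathfrak{f}_{(d,0)}$-component of the theta contraction $\Phi=\mathrm{Tr}(F\otimes\mathbf{\Theta}_L)$, so it suffices to evaluate $\Phi=\mathrm{Tr}(F_{\Gamma_0(N),f,0}\otimes\mathbf{\Theta}_L)$ and read off its components. First I would record that $\psi_1=f\,\Theta_{L,0}$ is a weakly holomorphic Jacobi form of weight $k$ and index $L$ with trivial character on $\Gamma_0(N)$: the function $\Theta_{L,0}$ is the $\mathfrak{e}_0$-component of $\mathbf{\Theta}_L$, hence transforms with weight $\frac{1}{2}\rank(L)$, index $L$ and multiplier $\overline{\chi_{D_L}}$ by Example \ref{ex:Theta}, so the product with $f$ carries the character $\chi_{D_M}\overline{\chi_{D_L}}=\chi_{D_{U(N_1)}}$ (using $D_M=D_{U(N_1)}\oplus D_L$ and $\chi_{D_M}=\chi_{D_{U(N_1)}}\chi_{D_L}$), and this character is trivial on $\widetilde{\Gamma}_0(N_1)\supseteq\widetilde{\Gamma}_0(N)$ because $\rho_{U(N_1)}(A)\mathfrak{e}_{(0,0)}=\mathfrak{e}_{(0,0)}$ for $A\in\Gamma_0(N_1)$. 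In particular $\psi_1\big|_{k,L}A$ is well defined on the cosets $\widetilde{\Gamma}_0(N)\backslash\Mp_2(\ZZ)$.

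The key step is to rewrite $\Phi$ as a Jacobi-theoretic average of $\psi_1$. Inserting the definition of $F_{\Gamma_0(N),f,0}$ from Theorem \ref{th:scalar-to-vector} and pulling the scalars out of $\mathrm{Tr}$ gives
$$
\Phi=\sum_{A\in\widetilde{\Gamma}_0(N)\backslash\Mp_2(\ZZ)}\big(f\big|A\big)\,\mathrm{Tr}\big((\rho_M(A^{-1})e_0)\otimes\mathbf{\Theta}_L\big).
$$
Writing $e_0=\mathfrak{e}_{(0,0)}\otimes\mathfrak{e}_0$, expanding $\rho_M=\rho_{U(N_1)}\otimes\rho_L$ componentwise, and letting the contraction set the two $L$-indices equal yields
$$
\mathrm{Tr}\big((\rho_M(A^{-1})e_0)\otimes\mathbf{\Theta}_L\big)=\Big(\sum_{\gamma\in L'/L}[\rho_L(A^{-1})]_{\gamma,0}\,\Theta_{L,\gamma}\Big)\,\rho_{U(N_1)}(A^{-1})\mathfrak{e}_{(0,0)}.
$$
By unitarity of $\rho_L$ one has $[\rho_L(A^{-1})]_{\gamma,0}=\overline{[\rho_L(A)]_{0,\gamma}}=[\bar{\rho}_L(A)]_{0,\gamma}$, so the parenthesized factor is exactly the $\mathfrak{e}_0$-component of $\mathbf{\Theta}_L\big|_{\frac{1}{2}\rank(L),L}A=\bar{\rho}_L(A)\mathbf{\Theta}_L$, that is $\Theta_{L,0}\big|A$. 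Since the Jacobi automorphy factors of a weight-$(k-\frac{1}{2}\rank(L))$ form and of a weight-$\frac{1}{2}\rank(L)$ index-$L$ form multiply to that of a weight-$k$ index-$L$ form, $\big(f\big|A\big)\big(\Theta_{L,0}\big|A\big)=\psi_1\big|_{k,L}A$, and therefore
$$
\Phi=\sum_{A\in\widetilde{\Gamma}_0(N)\backslash\Mp_2(\ZZ)}\big(\psi_1\big|_{k,L}A\big)\,\rho_{U(N_1)}(A^{-1})\mathfrak{e}_{(0,0)}.
$$

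Finally I would extract the $\mathfrak{f}_{(d,0)}$-component. Using $\mathfrak{e}_{(0,0)}=\sum_{c\in\ZZ/N_1\ZZ}\mathfrak{f}_{(c,0)}$ and $\rho_{U(N_1)}(A^{-1})\mathfrak{f}_{(c,0)}=\mathfrak{f}_{A^{-1}(c,0)}$, the coefficient of $\mathfrak{f}_{(d,0)}$ in $\rho_{U(N_1)}(A^{-1})\mathfrak{e}_{(0,0)}$ counts the $c\in\ZZ/N_1\ZZ$ with $A(d,0)\equiv(c,0)\bmod N_1$; writing $A=\begin{psmallmatrix}\alpha&\beta\\\gamma&\delta\end{psmallmatrix}$ this coefficient equals $1$ precisely when $\gamma d\equiv 0\bmod N_1$, i.e. when $(N_1/d)\mid\gamma$, and $0$ otherwise. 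As $d\mid N_1$, this is exactly the condition $A\in\Gamma_0(N_1/d)$, and it is well defined on the cosets $\widetilde{\Gamma}_0(N)\backslash\Mp_2(\ZZ)$ since $\widetilde{\Gamma}_0(N)\subseteq\widetilde{\Gamma}_0(N_1/d)$ (because $(N_1/d)\mid N$). Hence the surviving cosets are precisely $\widetilde{\Gamma}_0(N)\backslash\widetilde{\Gamma}_0(N_1/d)$, and the $\mathfrak{f}_{(d,0)}$-component of $\Phi$ is the trace $\sum_{A}\psi_1\big|_{k,L}A$ over a set of representatives for $\Gamma_0(N_1/d)/\Gamma_0(N)$, which is the asserted formula. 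I expect the main obstacle to be the second paragraph: matching the theta contraction of the Weil-representation average against the scalar product $\psi_1=f\,\Theta_{L,0}$, where one must track the matrix coefficients of $\rho_L$ and invoke unitarity to identify $[\rho_L(A^{-1})]_{\gamma,0}$ with the $\mathfrak{e}_0$-component of $\bar{\rho}_L(A)\mathbf{\Theta}_L$; the accompanying character and coset bookkeeping is routine but must be checked to guarantee well-definedness on $\widetilde{\Gamma}_0(N)$-cosets.
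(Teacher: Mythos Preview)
Your proof is correct and follows essentially the same route as the paper's. Both arguments expand $\mathfrak{e}_{(0,0)}=\sum_c\mathfrak{f}_{(c,0)}$, use $\rho_{U(N_1)}(A^{-1})\mathfrak{f}_v=\mathfrak{f}_{A^{-1}v}$ to see that only the cosets in $\Gamma_0(N_1/d)/\Gamma_0(N)$ contribute to the $\mathfrak{f}_{(d,0)}$-component, and then identify the $L$-part with $\Theta_{L,0}\big|A$; the only cosmetic difference is that you package everything through $\Phi=\mathrm{Tr}(F\otimes\mathbf{\Theta}_L)$ and invoke unitarity of $\rho_L$ together with Example~\ref{ex:Theta} for the identity $\sum_\gamma[\rho_L(A^{-1})]_{\gamma,0}\Theta_{L,\gamma}=\Theta_{L,0}\big|A$, whereas the paper works with $F$ directly and appeals to Poisson summation on the generators $S,T$ (which is of course precisely what underlies Example~\ref{ex:Theta}).
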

\begin{proof}
Obviously, $N_1|N$. 
Let $\kappa=k-\rank(L)/2$. The form $F := F_{\Gamma_0(N), f, 0}$ is 
\begin{align*}
F(\tau) &= \sum_{A \in \widetilde{\Gamma}_0(N) \backslash \mathrm{Mp}_2(\mathbb{Z})} \Big(f \Big|_\kappa A\Big) \rho_{U(N)}(A^{-1}) \mathfrak{e}_{(0, 0)} \otimes \rho_{U\oplus L}(A^{-1}) \mathfrak{e}_0 \\ 
&= \sum_{d \in \mathbb{Z}/N_1\mathbb{Z}} \sum_{A \in \widetilde{\Gamma}_0(N) \backslash \mathrm{Mp}_2(\mathbb{Z})} \Big(f \Big|_\kappa A\Big)  \mathfrak{f}_{A^{-1} \begin{psmallmatrix} d \\ 0 \end{psmallmatrix}} \otimes \rho_{U \oplus L}(A^{-1}) \mathfrak{e}_0. 
\end{align*} 
For any divisor $d | N_1$, the $\mathfrak{f}_{(d, 0)}$-component of $F$ is 
$$
\sum_{A \in \Gamma_0(N_1/d) / \Gamma_0(N)} \Big(f \Big|_\kappa A\Big) \rho_{U \oplus L}(A^{-1}) \mathfrak{e}_0 = \sum_{\gamma \in L'/L} \sum_{A \in \Gamma_0(N_1/d) / \Gamma_0(N)} \Big(f \Big|_\kappa A\Big) \langle \mathfrak{e}_{\gamma}, \rho_{U \oplus L}(A^{-1}) \mathfrak{e}_0 \rangle \cdot \mathfrak{e}_{\gamma},
$$ 
so after identifying the Weil representations of $U \oplus L$ and $L$ and using \eqref{eq:ft-J} we find that the associated Jacobi form is 
$$
\phi_d(\tau, \mathfrak{z}) = \sum_{A \in \Gamma_0(N_1/d) / \Gamma_0(N)} f \Big|_\kappa A \sum_{\gamma \in L'/L} \langle \mathfrak{e}_{\gamma}, \rho_L^{-1}(A) \mathfrak{e}_0 \rangle \Theta_{L, \gamma}(\tau, \mathfrak{z}).
$$ 
Using the Poisson summation formula one finds 
$$
\sum_{\gamma \in L'/L} \langle \mathfrak{e}_{\gamma}, \rho_L^{-1}(A) \mathfrak{e}_0 \rangle \Theta_{L, \gamma}(\tau, \mathfrak{z}) = \Theta_{L, 0}\Big|_{\rank(L)/2, L} A(\tau, \mathfrak{z})
$$ 
for $A = S, T$  and therefore for arbitrary $A \in \mathrm{Mp}_2(\mathbb{Z})$, so we obtain the expression for $\phi_d$.
\end{proof}

\subsection{The Fourier--Jacobi expansion of Borcherds products}
We will describe the Fourier--Jacobi expansion of a Borcherds product on $M=U(N)\oplus U\oplus L$ at the level $N$ cusp. Let $F$ be a weakly holomorphic modular form of weight $-\rank(L)/2$ for $\rho_{M}$ with integral principal part. We write its Fourier expansion as
\begin{align*}
F(\tau)&=\sum_n\sum_{a,b \in \ZZ / N\ZZ} \sum_{\gamma \in L'/L} c_{(a,b),\gamma}(n)q^n \mathfrak{e}_{(a,b)} \otimes \mathfrak{e}_\gamma\\ &=\sum_n \sum_{a,b \in \ZZ / N\ZZ} \sum_{\gamma \in L'/L} f_{(a,b), \gamma}(n)q^n \mathfrak{f}_{(a,b)} \otimes \mathfrak{e}_\gamma.   
\end{align*}
For any $a\in \NN$, define 
\begin{equation}
\phi_a(\tau,\mathfrak{z}) = \sum_{n,\ell} f_{(a,0),\ell}(n-Q(\ell))q^n\zeta^\ell   
\end{equation}
and denote its Fourier expansion at infinity by $\sum_{n,\ell}\phi_a(n,\ell)q^n\zeta^\ell$. 
Clearly, $(\phi_d)_{d|N}$ is the image of $F$ under $\mathbb{J}$. We have the following relation:
$$
\phi_a = \phi_{d}\Big|_{0,L}A_{a}, \quad d=(a,N),
$$
where $A_{a}\in \Gamma_0(N/d)$ is any matrix satisfying
$$
A_{a} = \left( \begin{array}{cc}
(a/d)^{-1} & 0 \\ 
0 & a/d
\end{array}   \right) \mod N/d.  
$$
For any $d|N$ we decompose 
$$
\phi_d = \sum_{\chi \m N/d} \phi_{d,\chi}
$$
into a sum of Jacobi forms on $\Gamma_0(N/d)$ with Dirichlet characters of modulus $N/d$.  

In \S \ref{subsec:Borcherds-products}, fix the class $\frac{e}{N}+U(N)$ as $(1, 0)$ and $e'+U(N)$ as $(0, 1)$. Then the Borcherds multiplicative lift of $F$ on $U(N)\oplus U\oplus L$ has the Fourier expansion
\begin{align*}
\Borch(F)(Z)&=e^{2\pi i(\rho, Z)} \prod_{ \substack{\lambda\in U\oplus L'\\ \lambda>0}}\prod_{a \m N}(1-e^{2\pi i a/N}e^{2\pi i(\lambda,Z)})^{c_{(a,0),\lambda}(-\lambda^2/2)}\\
&=q^A\zeta^B s^C\prod_{(n,\ell, m)>0} \prod_{a \m N}(1-e^{2\pi i a/N}q^n\zeta^\ell s^m)^{c_{(a,0),\ell}(nm-\ell^2/2)}
\end{align*}
where $Z=(\omega, \mathfrak{z},\tau)\in \HH \times (L\otimes\CC) \times \HH$, $\rho=(A,B,C)$ is the Weyl vector, $(n,\ell,m)\in \ZZ\oplus L' \oplus \ZZ$, $q=e^{2\pi i\tau}$, $\zeta^\ell = e^{2\pi i(\ell,\mathfrak{z})}$, $s=e^{2\pi i\omega}$, and $(n,\ell,m)>0$ means that either $m>0$, or $m=0$ and $n>0$, or $m=n=0$ and $\ell<0$.
We decompose $\Borch(F)$ into two parts:
\begin{align*}
\Borch_0(F)(Z)&= q^A\zeta^B s^C\prod_{(n,\ell)>0} \prod_{a \m N}(1-e^{2\pi i a/N}q^n\zeta^\ell)^{c_{(a,0),\ell}(-\ell^2/2)},\\
\Borch_1(F)(Z)&= \prod_{\substack{(n,\ell,m)>0\\m>0}} \prod_{a \m N}(1-e^{2\pi i a/N}q^n\zeta^\ell s^m)^{c_{(a,0),\ell}(nm-\ell^2/2)}.
\end{align*}
After substituting $c_{(a,0),\lambda}=\frac{1}{N}\sum_{c\m N} e^{-2\pi i ac/N} f_{(c,0),\lambda}$, the logarithm of $\Borch_1(F)$ becomes
\begin{align*} &\log \, \Borch_1(F)(Z) \\
=&\sum_{\substack{(n,\ell,m)>0\\ m>0}} \sum_{a\m N} \frac{1}{N}\sum_{c \m N} e^{-2\pi i ac/N} f_{ (c,0),\ell}(nm-\ell^2/2) \log(1-e^{2\pi i a/N}q^n\zeta^\ell s^m) \\
=& -\sum_{\substack{(n,\ell,m)>0\\ m>0}} \sum_{a \m N} \frac{1}{N}\sum_{c \m N} e^{-2\pi i ac/N} f_{ (c,0),\ell}(nm-\ell^2/2) \sum_{t=1}^\infty t^{-1} e^{2\pi i at/N}(q^n\zeta^\ell s^m)^t \\
= & -\sum_{\substack{(n,\ell,m)>0\\ m>0}} \sum_{c\m N}  f_{(c,0),\ell}(nm-\ell^2/2) \sum_{t=1}^\infty t^{-1} (q^n\zeta^\ell s^m)^t \sum_{a\m N} \frac{1}{N} e^{2\pi i a(t-c)/N}\\
= & -\sum_{\substack{(n,\ell,m)>0\\ m>0}}  \sum_{t=1}^\infty f_{(t,0),\ell}(nm-\ell^2/2) t^{-1} (q^n\zeta^\ell s^m)^t \\
=& -\sum_{\substack{(n,\ell,m)>0\\ m>0}}\sum_{a=1}^\infty \phi_{a}(nm,\ell) a^{-1} (q^n\zeta^\ell s^m)^a\\
= &- \sum_{\substack{(n,\ell,m)>0\\ m>0}} \sum_{d|N} \sum_{\chi \m N/d} d^{-1} \sum_{(a,N/d)=1} a^{-1} \chi(a) \phi_{d,\chi}(nm,\ell) (q^{dn}\zeta^{d\ell}s^{dm})^a\\
=& -\sum_{d|N} \sum_{\chi \m N/d} d^{-1} \sum_{\substack{(n, \ell,m)>0\\m>0}} \Big(\sum_{\substack{a|(n,\ell,m)\\ (a, N/d)=1}} a^{-1} \chi(a) \phi_{d,\chi}(nm/a^2,\ell/a)\Big) (q^{n}\zeta^{\ell}s^{m})^d\\
=& - \sum_{d|N} \sum_{\chi \m N/d} \Grit(d^{-1} \phi_{d,\chi})(dZ), 
\end{align*}
where for $\phi\in J_{0,L}^!(\Gamma_0(t), \chi_t)$ the formal additive lift $\Grit(\phi)$ of weight $0$ is defined as
\begin{equation}
\Grit(\phi)(Z) = \sum_{m=1}^{\infty} \phi\Big|_{0,L} T_{-}^{(t)}(m)(\tau,\mathfrak{z}) s^m.
\end{equation}
(Here $T_{-}^{(t)}(m)$ was defined in \eqref{eq:index-raising}.)
In particular, 
\begin{equation}\label{eq:b1}
    \Borch_1(F)(Z) = \exp\left( - \sum_{d|N} \sum_{\chi \m N/d} \Grit(d^{-1} \phi_{d,\chi})(dZ) \right).
\end{equation}

Similarly, we have
\begin{equation}
    \Borch_0(F)(Z) =q^A\zeta^B s^C\cdot \exp\left( - \sum_{d|N} \sum_{\chi \m N/d} \Grit_0(d^{-1} \phi_{d,\chi})(dZ) \right), 
\end{equation}
where for $\phi\in J_{0,L}^!(\Gamma_0(t), \chi_t)$ the formal Hecke operator $T_{-}^{(t)}(0)$ of index $0$ is defined as
\begin{equation}
\Grit_0(\phi)(\tau, \mathfrak{z})=  \sum_{(n,\ell)>0} \sum_{\substack{(a,t)=1\\ a|(n,\ell)}} a^{-1} \chi(a)\phi(0, \ell/a) q^{n}\zeta^\ell.      
\end{equation}

The above also implies the simple expression 
\begin{equation}\label{eq:exp}
\Borch(F)(Z) = q^A\zeta^B s^C \prod_{(n,\ell,m)>0} \exp\left(- \sum_{a=1}^\infty \phi_{a}(nm,\ell) \frac{(q^n\zeta^\ell s^m)^a}{a}  \right).     
\end{equation}

The modularity of $\Borch_1(F)$ under $\Gamma_1(N)\ltimes H(L)$ is clear from Expression \eqref{eq:b1}. Therefore, 
\begin{equation}
\vartheta_F := \Borch_0(F)(Z)\cdot s^{-C}    
\end{equation}
defines a meromorphic Jacobi form of index $L(C)$ on $\Gamma_1(N)$ with some multiplier.  The Fourier--Jacobi expansion of $\Borch(F)$ begins
\begin{equation}
    \Borch(F) = \vartheta_F \cdot s^C - \vartheta_F \phi_1 \cdot s^{C+1} + O(s^{C+2}). 
\end{equation}

The weight of $\Borch(F)$, i.e. half of the coefficient of the $q^0\mathfrak{e}_{(0,0)}\otimes\mathfrak{e}_{0}$-term in $F$ is given by
\begin{equation}\label{eq:weight}
\begin{split}
\frac{1}{2N}\sum_{c\m N} f_{(c,0),0}(0) &= \frac{1}{2N} \sum_{c\m N} \phi_{c}(0,0) \\
&= \frac{1}{2N} \sum_{d|N} \sum_{\chi \m N/d} \phi_{d,\chi}(0,0) \sum_{ c\m N/d }\chi(c)\\
&= \frac{1}{2N}\sum_{d|N} \varphi(N/d)\phi_{d,\chi_0}(0,0), 
\end{split}
\end{equation}
where $\varphi(n)$ is Euler's totient function and $\chi_0$ is the principal Dirichlet character.

Let $\lambda=(n,\ell,m) \in U\oplus L'$ be a primitive vector of positive norm. Then the multiplicity of $\lambda^\perp$ in the divisor of $\Borch(F)$ equals
\begin{equation}
\begin{split}
&\frac{1}{N} \sum_{c \m N} \sum_{a=1}^\infty f_{(c,0),a\lambda}(-a^2\lambda^2/2)\\
=&\frac{1}{N}\sum_{d|N}\sum_{\chi \m N/d} \sum_{a=1}^\infty \phi_{d,\chi}(a^2nm,a\ell) \sum_{c\m N/d} \chi(c)\\
=& \frac{1}{N}\sum_{d|N}  \sum_{a=1}^\infty \varphi(N/d) \phi_{d,\chi_0}(a^2nm,a\ell).
\end{split}
\end{equation}

\begin{theorem}\label{th:FJ-level-N}
Suppose $F\in M^!_{-\rank(L)/2}(\rho_{U(N) \oplus L})^{(\mathbb{Z}/N\mathbb{Z})^{\times}}$ has integral principal part. At the $1$-dimensional cusp determined by $U(N)\oplus U$, the product $\Borch(F)$ has the expressions
\begin{align}
\Borch(F)&=q^A\zeta^B s^C \prod_{(n,\ell,m)>0} \left( 1-q^n\zeta^\ell s^m\right)^{\mathrm{mult}(n,\ell,m)} \label{eq:B1} \\
&=q^A\zeta^B s^C \prod_{d|N} \prod_{(n,\ell,m)>0} \left( 1-\big(q^n\zeta^\ell s^m\big)^d\right)^{\mathrm{mult}_d(nm,\ell)} \label{eq:B2}
\end{align}
where 
\begin{equation}
\mathrm{mult}(n,\ell,m)=\sum_{\substack{b,d>0\\bd|(n,\ell,m,N)}} \frac{\mu(b)}{bd}\phi_d\Big( \frac{nm}{b^2d^2}, \frac{\ell}{bd} \Big) \end{equation}
and 
\begin{equation}
\mathrm{mult}_d(nm,\ell)=\sum_{t|d} \frac{\mu(d/t)}{d}\phi_t(nm,\ell)
\end{equation}
and
\begin{align}
&A=\frac{1}{24}\sum_{\ell} \phi_N(0,\ell), \quad B=\frac{1}{2}\sum_{\ell>0}\phi_N(0,\ell)\ell, \\
&C=\frac{1}{\rank(L)}\sum_{\ell>0} \phi_N(0,\ell)(\ell,\ell) = A - \sum_{n>0}\sum_{\ell}\phi_N(-n,\ell)\sigma_1(n).      
\end{align}
Moreover, $\Borch(F)$ has the Fourier--Jacobi expansion
\begin{equation}
\Borch(F)(Z) = \vartheta_F(\tau,\mathfrak{z}) \cdot s^C \cdot \exp\Big(-\sum_{d|N}\Grit\big(d^{-1}\phi_d\big)(dZ)\Big),
\end{equation}
where 
\begin{equation}
\vartheta_F(\tau,\mathfrak{z}) = \prod_{d|N} \left( \eta(d\tau)^{\mathrm{mult}_d(0,0)}\prod_{\ell>0} \left( \frac{\vartheta(d\tau, d(\ell,\mathfrak{z}))}{\eta(d\tau)} \right)^{\mathrm{mult}_d(0,\ell)} \right)
\end{equation}
and
\begin{align}
\eta(\tau)&=q^{\frac{1}{24}}\prod_{n=1}^\infty(1-q^n),\\
\vartheta(\tau,z)&=q^{\frac{1}{8}}(e^{\pi iz} - e^{-\pi iz})\prod_{n=1}^\infty (1-q^ne^{2\pi iz})(1-q^ne^{-2\pi iz})(1-q^n), \quad z\in \CC.
\end{align}
\end{theorem}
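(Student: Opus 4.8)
The plan is to extract every assertion from the two formulas already established before the statement: the logarithmic product \eqref{eq:exp} for $\Borch(F)$ and the factorization \eqref{eq:b1} of $\Borch_1(F)$. The one structural point I still need is that the hypothesis $F\in M^!_{-\rank(L)/2}(\rho_{U(N)\oplus L})^{(\ZZ/N\ZZ)^\times}$ forces, by the corollary following Theorem \ref{th:iso}, each $\phi_d$ to be modular on $\Gamma_0(N/d)$ with trivial character. Consequently the relation $\phi_a=\phi_{(a,N)}\big|_{0,L}A_a$ with $A_a\in\Gamma_0(N/(a,N))$ degenerates to the equality of Fourier coefficients $\phi_a(n,\ell)=\phi_{(a,N)}(n,\ell)$, so that in \eqref{eq:exp} the sum over $a$ may be grouped according to the divisor $d=(a,N)$.

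First I would derive \eqref{eq:B2} and \eqref{eq:B1}. Writing $a=da'$ with $(a',N/d)=1$ and using the elementary identity
\[
\sum_{\substack{a'\ge 1\\ (a',t)=1}}\frac{y^{a'}}{a'}=-\sum_{b\mid t}\frac{\mu(b)}{b}\log(1-y^b),
\]
the exponent in \eqref{eq:exp} attached to $(n,\ell,m)$ becomes $\sum_{d\mid N}\frac{\phi_d(nm,\ell)}{d}\sum_{b\mid N/d}\frac{\mu(b)}{b}\log\big(1-(q^n\zeta^\ell s^m)^{bd}\big)$. Exponentiating turns $\Borch(F)$ into a product over $(n,\ell,m)$, $d\mid N$ and $b\mid N/d$. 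Collecting the factors whose base is a fixed power $(q^n\zeta^\ell s^m)^{e}$, i.e. setting $e=bd$ so that $d\mid e\mid N$ and $b=e/d$, gives \eqref{eq:B2} with exponent $\frac{1}{e}\sum_{d\mid e}\mu(e/d)\phi_d(nm,\ell)=\mathrm{mult}_e(nm,\ell)$; collecting instead the factors contributing a fixed monomial $q^n\zeta^\ell s^m$, which forces $bd\mid(n,\ell,m,N)$, gives \eqref{eq:B1} with the stated $\mathrm{mult}(n,\ell,m)$.

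Next I would split the product by the sign of $m$. The $m>0$ part is $\Borch_1(F)$, and after the trivial-character reduction $\sum_{\chi}\Grit(d^{-1}\phi_{d,\chi})=\Grit(d^{-1}\phi_d)$ the formula \eqref{eq:b1} is exactly the claimed factor $\exp\big(-\sum_{d\mid N}\Grit(d^{-1}\phi_d)(dZ)\big)$; together with $\vartheta_F s^C=\Borch_0(F)$ this yields the stated Fourier--Jacobi expansion. For the $m=0$ part I would compare the exponents $\mathrm{mult}_d(0,0)$ and $\mathrm{mult}_d(0,\ell)$ of \eqref{eq:B2} with the product expansions of $\eta(d\tau)$ and $\vartheta(d\tau,d(\ell,\mathfrak{z}))$. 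Since $\vartheta(d\tau,d(\ell,\mathfrak{z}))/\eta(d\tau)=q^{d/12}\zeta^{d\ell/2}(1-\zeta^{-d\ell})\prod_{n\ge1}(1-q^{dn}\zeta^{d\ell})(1-q^{dn}\zeta^{-d\ell})$, the symmetry $\mathrm{mult}_d(0,\ell)=\mathrm{mult}_d(0,-\ell)$ lets the $m=0$ factors be reassembled into the stated theta block, the prefactor $q^A\zeta^B$ accounting precisely for the leading terms $q^{d/24}$, $q^{d/12}$ and $\zeta^{\pm d\ell/2}$.

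Finally I would read off the Weyl vector. The $q$-order of the theta block equals $\sum_{d\mid N}\big(\frac{d}{24}\mathrm{mult}_d(0,0)+\frac{d}{12}\sum_{\ell>0}\mathrm{mult}_d(0,\ell)\big)$, and the decisive simplification is the telescoping $\sum_{t\mid d\mid N}\mu(d/t)=\sum_{e\mid N/t}\mu(e)$, which vanishes unless $t=N$; this collapses each $d$-sum weighted by $d\,\mathrm{mult}_d$ to a single value of $\phi_N$, giving $A=\frac{1}{24}\sum_\ell\phi_N(0,\ell)$ and, from the $\zeta$-character, $B=\frac12\sum_{\ell>0}\phi_N(0,\ell)\ell$. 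For $C$ I would use that $\vartheta_F$ has index $L(C)$: the theta block has index form $\sum_{d\mid N}d\sum_{\ell>0}\mathrm{mult}_d(0,\ell)(\ell,\mathfrak{z})^2=C(\mathfrak{z},\mathfrak{z})$, and taking the trace relative to the form of $L$ yields $C=\frac{1}{\rank(L)}\sum_{\ell>0}\phi_N(0,\ell)(\ell,\ell)$ after the same telescoping, while the second expression for $C$ is the heat-operator identity for the weight-$0$ index-$L$ Jacobi form $\phi_N\in J^!_{0,L}(\SL_2(\ZZ))$ from \cite[Proposition 2.6]{Gri18} already used in Theorem \ref{th:Leech-type}. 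I expect the only real obstacle to be the arithmetic bookkeeping in these M\"obius and telescoping steps — keeping the coprimality condition $(a',N/d)=1$ aligned with the divisor conditions $bd\mid(n,\ell,m,N)$ — together with the verification that the index of $\vartheta_F$ is genuinely proportional to the form of $L$, which is what legitimises the single scalar $C$ and the trace computation.
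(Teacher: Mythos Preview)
Your proposal is correct and follows essentially the same route as the paper's proof: you invoke the same M\"obius identity \eqref{eq:log} to pass from \eqref{eq:exp} to the product forms \eqref{eq:B1}--\eqref{eq:B2}, you reassemble the $m=0$ factors into the theta block in the same way, and you use the identical telescoping $\sum_{t\mid d\mid N}\mu(d/t)=\delta_{t,N}$ together with the index-$L(C)$ property of $\vartheta_F$ and \cite[Proposition~2.6]{Gri18} to extract the Weyl vector. The only cosmetic difference is that you make explicit the reduction $\phi_a(n,\ell)=\phi_{(a,N)}(n,\ell)$ coming from the $(\ZZ/N\ZZ)^\times$-invariance, whereas the paper absorbs this silently into ``the previous calculation''.
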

\begin{proof}
By the previous calculation, the logarithm of $\Borch(F)\cdot q^{-A}\zeta^{-B} s^{-C}$ has the form
\begin{align*}
&- \sum_{(n,\ell,m)>0} \sum_{d|N} d^{-1} \sum_{\substack{a>0\\(a,N/d)=1}} a^{-1} \phi_{d}(nm,\ell) (q^{dn}\zeta^{d\ell}s^{dm})^a   \\
=& \sum_{(n,\ell,m)>0} \sum_{d|N} d^{-1} \phi_d(nm,\ell) \sum_{b|N/d} \frac{\mu(b)}{b}\log\Big(1 - (q^{dn}\zeta^{d\ell}s^{dm})^b \Big),
\end{align*}
where the equality uses M\"obius inversion, through the identity 
\begin{equation}\label{eq:log}
\sum_{\substack{n>0\\ (n,N)=1}} \frac{x^n}{n} = - \sum_{b|N} \frac{\mu(b)}{b}\log(1-x^b).    
\end{equation}
Taking exponentials proves \eqref{eq:B1} and \eqref{eq:B2}. It remains to determine $\vartheta_F$ and the Weyl vector $(A,B,C)$. We calculate (recalling that $(n,\ell)>0$ means either $n>0$ or $n=0$ and $\ell<0$):
\begin{align*}
&\prod_{d|N} \prod_{(n,\ell)>0} \left( 1-(q^n\zeta^\ell)^d\right)^{\mathrm{mult}_d(0,\ell)}  \\
=&\prod_{d|N} \left[ \prod_{n=1}^\infty\Big(1-q^{dn}\Big)^{\mathrm{mult}_d(0,0)} \cdot  \prod_{\ell>0}\left((1-\zeta^{-d\ell})\prod_{n=1}^\infty\Big(1-q^{dn}\zeta^{d\ell}\Big)\Big(1-q^{dn}\zeta^{-d\ell}\Big)\right)^{\mathrm{mult}_d(0,\ell)} \right]\\
=& \prod_{d|N}\left[ \Big( q^{-\frac{d}{24}}\eta(d\tau)\Big)^{\mathrm{mult}_d(0,0)} \cdot \prod_{\ell>0} \left( q^{-\frac{d}{12}}\zeta^{-\frac{d\ell}{2}} \frac{\vartheta(d\tau, d(\ell,\mathfrak{z}))}{\eta(d\tau)} \right)^{\mathrm{mult}_d(0,\ell)} \right]\\
=&q^{-\hat{A}}\zeta^{-\hat{B}} \cdot  \prod_{d|N} \left( \eta(d\tau)^{\mathrm{mult}_d(0,0)}\prod_{\ell>0} \left( \frac{\vartheta(d\tau, d(\ell,\mathfrak{z}))}{\eta(d\tau)} \right)^{\mathrm{mult}_d(0,\ell)} \right),
\end{align*}
where
\begin{align*}
\hat{A}=&\frac{1}{24}\sum_{d|N}\sum_{\ell}d\cdot \mathrm{mult}_d(0,\ell)
\\ =& \frac{1}{24}\sum_{d|N}\sum_{\ell} \sum_{t|d} \mu(d/t) \phi_t(0,\ell)\\
=& \frac{1}{24} \sum_{t|N} \sum_{\ell} \phi_t(0,\ell) \sum_{d|N/t} \mu(d)
= \frac{1}{24} \sum_{\ell} \phi_N(0,\ell)
\end{align*}
and similarly 
\begin{align*}
\hat{B}=&\frac{1}{2}\sum_{d|N}\sum_{\ell>0} d\cdot\mathrm{mult}_d(0,\ell)\ell
=\frac{1}{2} \sum_{\ell>0} \phi_N(0,\ell)\ell. 
\end{align*}
Comparing leading coefficients shows that $A = \hat{A}$ and $B = \hat{B}$. Since $\vartheta_F$ is a meromorphic Jacobi form of index $L(C)$ on $\Gamma_0(N)$, the number $C$ is determined by the relation
$$
\sum_{d|N} \sum_{\ell>0} d \cdot \mathrm{mult}_d(0,\ell)\cdot (\ell, \mathfrak{z})^2 = C(\mathfrak{z}, \mathfrak{z}),
$$
which reduces to 
$$
\sum_{\ell>0} \phi_N(0,\ell) \cdot (\ell, \mathfrak{z})^2 = C(\mathfrak{z}, \mathfrak{z}). 
$$
Finally \cite[Proposition 2.6]{Gri18} leads to the claimed formula for $C$ because $\phi_N\in J_{0,L}^!(\SL_2(\ZZ))$. 
\end{proof}

In Theorem \ref{th:FJ-level-N}, the weight of $\Borch(F)$ equals $\frac{1}{2}\sum_{d|N}\mathrm{mult}_d(0,0)$, which agrees with  \eqref{eq:weight}. 

\begin{remark}
In general, the leading Fourier--Jacobi coefficient of $\Borch(F)$ can be expressed as
$$
\vartheta_F(\tau,\mathfrak{z})= \eta(\tau)^{2k} \cdot \eta_F(\tau) \cdot \prod_{\ell>0}\prod_{a\m N}\left(\frac{\vartheta(\tau, (\ell,\mathfrak{z})+a/N)}{\eta(\tau)}\right)^{c_{(a,0),\ell}(-\ell^2/2)},
$$
where $k=\frac{1}{2}c_{(0,0),0}(0)$ is the weight of $\Borch(F)$ and 
$$
\eta_F(\tau) = q^{A'} \prod_{n=1}^\infty \prod_{a=1}^{N-1}\Big( 1-q^n e^{2\pi ia/N} \Big)^{c_{(a,0),0}(0)} 
$$
defines a meromorphic modular form of weight $0$ on $\Gamma_1(N)$ for some $A'\in \QQ$. This expression determines the values of $B$ and $C$ in the Weyl vector of $\Borch(F)$, and the invariance of $\Borch(F)$ under exchanging $\tau$ and $\omega$ yields the value of $A$. In fact, the vector $(A,B,C)$ is completely determined by the component $\phi_N$ as in Theorem \ref{th:FJ-level-N}. Thus $A'=\frac{1}{24}\sum_{a=1}^{N-1} c_{(a,0),0}(0)$. 
\end{remark}

\begin{remark}
An analogue of Theorem \ref{th:iso} for $\Orth(1,2)$ was recently established in \cite{DHR22}.
\end{remark}

\begin{remark}
All results in this section continue to hold when $L=0$, in which case weakly holomorphic Jacobi forms $\phi_d$ of weight $0$ on $\Gamma_1(N/d)$ degenerate to scalar-valued weakly holomorphic modular forms of weight $0$ on $\Gamma_1(N/d)$. This was treated in \cite{Car12}. The Weyl vector in Theorem \ref{th:FJ-level-N} simplifies to 
$$
(A,C)=\Big( \frac{1}{24} \phi_N(0), \; A - \sum_{n>0}\phi_N(-n)\sigma_1(n) \Big).
$$
As an application, let $b_d \in \ZZ$ for $d|N$.  For any $d|N$ we define $\phi_d$ to be the integer $c_d=\sum_{t|d} tb_t$. Then \eqref{eq:B2} in Theorem \ref{th:FJ-level-N} yields
\begin{align*}
    \Borch(F)(\omega,\tau)&=q^A s^C\prod_{d|N} \prod_{m=1}^\infty \Big(1-s^{md}\Big)^{\mathrm{mult}_d(0,0)} \prod_{n=1}^\infty \Big(1-q^{nd}\Big)^{\mathrm{mult}_d(0,0)} \\
    &= \prod_{d|N} \eta(d\tau)^{b_d}\eta(d\omega)^{b_d}. 
\end{align*}
In other words, for an eta quotient $\eta_g$ associated to any cycle shape $g=\prod_{d|N}d^{b_d}$ the form $\eta_g(\tau)\eta_g(\omega)$ is realized as above as a Borcherds product on $U(N)\oplus U$. By \cite{Zem20}, its pullback along the embedding $U(N)\oplus\latt{-2}<U(N)\oplus U$ is again a Borcherds product. In other words, every eta quotient $f(\tau) = \prod_{k=1}^{\infty} \eta( m_k \tau)^{b_k}$ can be realized as a Borcherds product on the lattice $U(N) \oplus \latt{-2}$ with $N = \mathrm{lcm}(m_k: \, b_k \ne 0)$.
\end{remark}

\begin{remark}
Let $N_1$ be a positive integer. The function $\Borch(F)$ in Theorem \ref{th:FJ-level-N} can be viewed as a Borcherds product on $U(NN_1)\oplus U\oplus L$ and by \eqref{eq:exp} we find that the associated input as Jacobi forms $(\hat{\phi}_d)_{d|NN_1}$ is 
$$
\hat{\phi}_d = \phi_{(d,N)}, \quad d|NN_1. 
$$
\end{remark}

\begin{remark}
Aoki and Ibukiyama \cite[\S 6]{AI05} constructed Siegel modular forms of degree two as infinite products in terms of Jacobi forms of index $\latt{2}$ on $\Gamma_0(N)$ under certain assumptions. Later, Cl\'{e}ry and Gritsenko \cite[Theorem 3.1]{CG11} removed the assumptions and extended the construction to Siegel paramodular forms of degree two which can be realized as modular forms on lattices of type $U(N)\oplus U\oplus \latt{2t}$. Our approach further extends their construction to more general lattices of type $U(N)\oplus U\oplus L$. Note that Cl\'ery--Gritsenko constructed infinite products associated to single Jacobi forms of higher level. The input of \cite[Theorem 3.1]{CG11} in our context has the form 
\begin{align*}
&\phi_1(\tau,z) \in J_{0,\latt{2t}}^!(\Gamma_0(N)),\\
&\phi_d(\tau,z) = \sum_{A\in \Gamma_0(N/d) / \Gamma_0(N)} \phi_1\Big|_{0,\latt{2t}}A(\tau,z),\quad  d|N.
\end{align*}

More importantly, our approach shows that their infinite products in terms of Jacobi forms are in fact true Borcherds products. This allows us to prove that their infinite products have meromorphic continuations to the entire Siegel upper half space $\HH_2$ and to calculate the multiplicity of each divisor (including some divisors which are not visible in the infinite product expansion).
\end{remark}

\begin{remark}
Let $(\phi_d)_{d|N} \in \bigoplus_{d|N}J_{k,L}^!(\Gamma_1(N))$ for a positive integer $k$. The Borcherds additive lift \cite[Theorem 14.3]{Bor98} sends $F:=\mathbb{J}^{-1}((\phi_d)_{d|N})$ to a meromorphic modular form of weight $k$ and trivial character on $\widetilde{\Orth}^+(M)$ whose singularities are poles of order $k$ along certain rational quadratic divisors determined by the principal part of $F$. A computation analogous to the proof of Theorem \ref{th:FJ-level-N} shows that this form has Fourier--Jacobi expansion on $U(N)\oplus U\oplus L$ as
\begin{equation}
\Grit(F)(Z) = \sum_{d|N} \sum_{\chi \m N/d} \Grit(d^{k-1} \phi_{d,\chi})(dZ),   
\end{equation}
where for $\phi \in J_{k,L}^!(\Gamma_0(t),\chi_t)$ the Gritsenko additive lift $\Grit(\phi)$ is
\begin{equation}
\Grit(\phi)(Z) = \sum_{m=0}^{\infty} \phi\Big|_{k,L} T_{-}^{(t)}(m)(\tau,\mathfrak{z}) s^m,    
\end{equation}
which is a meromorphic modular form of weight $k$ on $\widetilde{\Orth}^+(U(t)\oplus U\oplus L)$. Here,
\begin{equation}
\phi\Big|_{k,L} T_{-}^{(t)}(0)(\tau,\mathfrak{z}) = c_\phi + \sum_{(n,\ell)>0} \sum_{\substack{(a,t)=1\\ a|(n,\ell)}} a^{k-1} \chi_t(a)\phi(0, \ell/a) q^{n}\zeta^\ell 
\end{equation}
where $c_\phi$ is a constant involving Bernoulli numbers that can be read off of \cite[Theorem 14.3]{Bor98}. In particular $\phi\Big|_{k,L} T_{-}^{(t)}(0)$ is a meromorphic Jacobi form of weight $k$ and index $L(0)$ on $\Gamma_0(t)$ with character $\chi_t$. \end{remark}

\section{Singular Borcherds products on lattices of prime level}\label{sec:prime}
In this section we classify all singular Borcherds products on lattices of prime level which are modular under the full orthogonal group. The important point is that we do not assume a priori that these products are reflective.  

\begin{theorem}\label{th:prime-level}
There are exactly $12$ lattices $M$ of prime level $p$ and not of type $\II_{26,2}(p)$ that admit a holomorphic Borcherds product of singular weight for $\Orth^+(M)$. For each $M$, the singular product is unique, and it is non-vanishing along only one 1-dimensional cusp of type $U(p)\oplus U\oplus L$. In Table \ref{tab: Sch} below we list these lattices and the inputs of the corresponding singular Borcherds products as scalar-valued weakly holomorphic modular forms on $\Gamma_0(p)$. 
\end{theorem}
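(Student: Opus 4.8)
The plan is to run the argument of Section \ref{sec:maximal} one level up, replacing the single Jacobi form $\phi_0\in J_{0,L}^!(\SL_2(\ZZ))$ of Theorem \ref{th:Leech-type} by the pair $(\phi_1,\phi_p)=\mathbb{J}(\varphi)$ attached to the input $\varphi\in M^!_{1-l/2}(\rho_M)$ of $F$ through the isomorphism of Theorem \ref{th:iso}. Throughout, $F$ is a symmetric holomorphic Borcherds product of singular weight on a lattice $M$ of prime level $p$ with $M\neq\II_{26,2}(p)$. First I would reduce to a good cusp. By Lemma \ref{lem:1-cusp}, $F$ is non-vanishing along some $1$-dimensional cusp, spanned by an isotropic plane $P\subset M$. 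For prime level $p$ every primitive isotropic vector $c$ satisfies $(c,M)\in\{\ZZ,p\ZZ\}$, so such a cusp has type $2U$, $U(p)\oplus U$, or $2U(p)$. The type $2U$ is excluded: there $M=2U\oplus L$, and Theorem \ref{th:Leech-type} would force $F$ to be a pullback of $\Phi_{12}$ with $L$ of square-free level, which Remark \ref{rk:prime-symmetric} forbids for a symmetric form on a non-unimodular $L$. Hence $P$ contains a vector of type $p\ZZ$, and by the splitting arguments of Lemmas \ref{lem:U} and \ref{lem:maximal} (applied to a $p\ZZ$-type and a complementary $\ZZ$-type isotropic vector) I obtain a decomposition $M=U(p)\oplus U\oplus L$ along which $F$ does not vanish.

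With this decomposition fixed, the symmetry of $F$ gives $\phi_p\in J_{0,L}^!(\SL_2(\ZZ))$ and $\phi_1\in J_{0,L}^!(\Gamma_0(p))$, by the corollary following Theorem \ref{th:iso}. By Theorem \ref{th:FJ-level-N} the leading Fourier--Jacobi coefficient of $F$ is the theta block $\vartheta_F$. Since $F$ is holomorphic of singular weight and non-vanishing at the cusp, $\vartheta_F$ is a holomorphic Jacobi form of singular weight; arguing as in the proof of Theorem \ref{th:Leech-type} via \cite[Theorem 4.2, Proposition 2.6]{Gri18}, the $q^0$-part of $\phi_p$ is supported on $\ell=0$, so the $\vartheta$-factors drop out and $\vartheta_F=\eta(\tau)^{b_1}\eta(p\tau)^{b_p}$ is an eta quotient with $b_d=\mathrm{mult}_d(0,0)$. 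The next Fourier--Jacobi coefficient is $-\vartheta_F\phi_1$, a holomorphic Jacobi form of singular weight and index $L$, hence a combination of theta functions $\Theta_{L,\gamma}$.

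Two linear relations then pin down the eta quotient. Comparing weights gives $b_1+b_p=\rank(L)$. Applying the heat operator to the singular form $\vartheta_F\phi_1$ exactly as in the derivation of \eqref{eq:N=1} — where now $\partial_\tau\log\vartheta_F$ contributes $b_1E_2(\tau)+pb_pE_2(p\tau)$ in place of $NE_2(\tau)$ — and comparing constant terms yields $b_1+pb_p=24$. Thus $1^{b_1}p^{b_p}$ is a frame shape of degree $24$, and $(b_1,b_p)$ is determined by $p$ and $\rank(L)$; integrality of $b_p=(24-\rank(L))/(p-1)$ together with $b_1\geq 0$-type constraints already restricts $\rank(L)$ for each prime $p$.

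The main obstacle is the final enumeration, carried out with no reflectivity hypothesis on $\varphi$. Here I would analyze the singular theta combination $-\vartheta_F\phi_1=\sum_\gamma c_\gamma\Theta_{L,\gamma}$ through its $S$- and $T$-invariance, as at the end of the proof of Theorem \ref{th:Leech-type} and in Remark \ref{rk:prime-symmetric}: the support $\mathcal{A}=\{\gamma:c_\gamma\neq 0\}$ is a rigidly constrained isotropic subset of $D_L$, and combined with $L$ being even positive-definite of prime level $p$ and of the forced rank this determines the genus of $L$. One must then check that the only admissible genera are the fixed-point lattices $\Lambda^g$ of the elements $g\in\mathrm{Co}_0$ of prime order with $\Lambda^g\neq 0$, yielding the twelve genera in the statement; uniqueness on each $M$ follows because the theta combination determines the components $\mathfrak{e}_{(a,0)}\otimes\mathfrak{e}_\gamma$ of $\varphi$, which by Lemma \ref{lem:unique} determine $\varphi$, and the scalar inputs on $\Gamma_0(p)$ in Table \ref{tab: Sch} are read off from Remark \ref{rk:Scheithauer-lifting}. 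The delicate point is precisely that these discriminant-form equations must be reconciled with the genuine existence of an even lattice $L$ in the relevant genus, which cuts the a priori infinite list of admissible frame shapes down to exactly twelve.
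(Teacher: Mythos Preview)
Your overall plan is right—reduce to a $U(p)\oplus U\oplus L$ cusp, show the leading Fourier--Jacobi coefficient is an eta quotient, and extract constraints from the next coefficient—but the argument diverges from the paper at the decisive technical step and leaves real gaps.

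First, your cusp reduction is incomplete: you rule out type $2U$ but never exclude type $2U(p)$. Saying that $P$ contains a $p\ZZ$-type vector does not guarantee the second vector is of $\ZZ$-type. The paper handles $2U(p)$ separately by passing to $M'(p)$, which is again of level $1$ or $p$, and invoking Theorem \ref{th:Leech-type} there; this is exactly where the assumption $M\neq\II_{26,2}(p)$ is used.

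The more serious gap is the analysis of $\phi_1$. The paper does \emph{not} try to control the general theta decomposition $-\vartheta_F\phi_1=\sum_\gamma c_\gamma\Theta_{L,\gamma}$ via discriminant-form equations. Instead it invokes \cite[Corollary 5.5]{Sch15}: because $F$ is symmetric, the input $\varphi$ is $\Orth(D_M)$-invariant and for prime level is therefore a Scheithauer lift $F_{\Gamma_0(p),f,0}$ of a single scalar form $f$. By Lemma \ref{lem:special-input} this forces $\phi_1=f\,\Theta_{L,0}$ on the nose. Then singularity of $-\vartheta_F\phi_1=-\vartheta_F f\,\Theta_{L,0}$ immediately gives $f=-c/\vartheta_F$, so $f$ is the inverse of the eta quotient. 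This is the step that makes the enumeration tractable: with $f$ known, the paper reads off the divisor multiplicities from the expansion of $f|_kS$ via the explicit formula in Remark \ref{rk:Scheithauer-lifting}, and the simple-zeros result \cite[Theorem 1.2]{WW23} turns these into diophantine equations in $(p,d_1,|D_L|)$ with finitely many solutions. Your proposed route through the set $\mathcal{A}$ never gets access to $f|_kS$ and cannot reproduce these multiplicity constraints.

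Finally, the paper splits into two cases according to whether $F$ vanishes on $H(1)$, which determines the sign under $\tau\leftrightarrow\omega$ and forces the $q$-order $A$ of $\vartheta_F$ to be $0$ or $1$. Your heat-operator claim $b_1+pb_p=24$ corresponds only to the $A=1$ case; in the $A=0$ case one has $b_1+pb_p=0$ instead, and the three lattices $U(2)\oplus U\oplus E_8$, $U(3)\oplus U\oplus E_6$, $U(5)\oplus U\oplus A_4$ arise from that branch. Without this dichotomy you will both miss solutions and overcount.
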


\begin{table}[htp]
\caption{Holomorphic Borcherds products of singular weight on $\Orth^+(M)$ for lattices $M$ of prime level $p$}
\label{tab: Sch}
\renewcommand\arraystretch{1.5}
\[
\begin{array}{|c|c|c|c|c|c|}
\hline 
p & \text{genus}& M & f(\tau) & \text{zeros}  \\ 
\hline 
2 & \II_{18,2}(2_{\II}^{+10})& U(2)\oplus U\oplus \text{Barnes-Wall lattice} & \eta_{1^{-8}2^{-8}} & 1,2  \\
& \II_{10,2}(2_{\II}^{+2}) & U(2)\oplus U\oplus E_8 & 16\eta_{1^{-16}2^{8}} & 2  \\
& \II_{10,2}(2_{\II}^{+10}) & U(2)\oplus U\oplus E_8(2) & \eta_{1^{8}2^{-16}} & 1 \\
\hline 
3 & \II_{14,2}(3^{-8})& U(3)\oplus U\oplus \text{Coxeter--Todd lattice} & \eta_{1^{-6}3^{-6}}& 1,3  \\
 & \II_{8,2}(3^{-3})& U(3)\oplus U\oplus E_6 & 9\eta_{1^{-9}3^{3}} & 3  \\
 & \II_{8,2}(3^{-7})& U(3)\oplus U\oplus E_6'(3) & \eta_{1^{3}3^{-9}} & 1  \\
\hline  
5 & \II_{10,2}(5^{+6})& U(5)\oplus U\oplus \text{Maass lattice} & \eta_{1^{-4}5^{-4}} & 1,5  \\
& \II_{6,2}(5^{+3})& U(5)\oplus U\oplus A_4 & 5\eta_{1^{-5}5^{1}} & 5  \\
& \II_{6,2}(5^{+5})& U(5)\oplus U\oplus A_4'(5) & \eta_{1^{1}5^{-5}} & 1 \\
\hline 
7 & \II_{8,2}(7^{-5})& U(7)\oplus U\oplus \text{Barnes--Craig lattice} & \eta_{1^{-3}7^{-3}} & 1,7 \\
\hline 
11 & \II_{6,2}(11^{-4})& U(11)\oplus U\oplus \Lambda^{1^211^2} & \eta_{1^{-2}11^{-2}} & 1,11 \\
\hline 
23 & \II_{4,2}(23^{-3})& U(23)\oplus U\oplus \Lambda^{1^1 23^1} & \eta_{1^{-1}23^{-1}} & 1,23 \\
\hline
\end{array} 
\]
\end{table}

In Table \ref{tab: Sch}, $\Lambda^{1^1 23^1}$ and $\Lambda^{1^211^2}$ stand for the fixed-point sublattices of the Leech lattice associated with Conway's elements of cycle shapes $1^1 23^1$ and $1^2 11^2$, respectively (see the next section). The zeros of types $1$ and $p$ respectively denote the Heegner divisors $H(1)$ and $H(1/p)$ defined by
\begin{equation}\label{eq:Heegner}
H(1)=\sum_{\substack{\lambda\in M\\\lambda^2=2}} \lambda^\perp, \quad H(1/p)=\sum_{\substack{\lambda\in M'\\\lambda^2=2/p}} \lambda^\perp.    
\end{equation}
The singular Borcherds product on $M$ with divisor $H(1)$ is identified with the singular Borcherds product on $M'(p)$ with divisor $H(1/p)$. Therefore, there are only $9$ distinct singular Borcherds products in Theorem \ref{th:prime-level}.

\begin{remark}
The above 12 singular Borcherds products were systematically constructed by Scheithauer \cite{Sch04}. In \cite{Sch06} Scheithauer proved that they are the only reflective automorphic products of singular weight on lattices of prime level which have only simple zeros and are modular under the full orthogonal group. Scheithauer defined reflective automorphic products as reflective Borcherds products for which every nonzero Fourier coefficient in the principal part of the input corresponds to a nonempty quadratic divisor. This is why $\II_{26,2}(p)$ did not occur in his classification.  Scheithauer also showed that these singular Borcherds products correspond to the conjugacy classes of prime level of Conway's $\mathrm{Co}_0$ group and they coincide with the corresponding twisted denominator functions of the fake monster Lie algebra. 
\end{remark}

\begin{proof}
Let $M$ be an even lattice of signature $(l,2)$ and prime level $p$ with $l\geq 3$. Assume that $M$ is not isomorphic to the $p$-rescaling $\II_{26,2}(p)$ of $\II_{26,2}$. Suppose $F$ is a holomorphic Borcherds product of singular weight for $\Orth^+(M)$. By Lemma \ref{lem:1-cusp}, $F$ does not vanish at some 1-dimensional cusp denoted by $P_F$. By \cite[Lemma 5.1]{Bru14} and Lemma \ref{lem:U}, $P_F$ is represented by one of the decompositions $M=2U\oplus L$, $2U(p)\oplus L$ or $U(p)\oplus U\oplus L$. The first two possibilities can be ruled out:
\begin{enumerate}
\item By Theorem \ref{th:Leech-type} and Remark \ref{rk:prime-symmetric}, $P_F$ is not represented by a decomposition $M=2U\oplus L$.
\item If $P_F$ is represented by a decomposition $M=2U(p)\oplus L$, then we can view $F$ as a singular Borcherds product on $2U\oplus L'(p)$ which does not vanish at the corresponding 1-dimensional cusp, because $M'(p)$ is either unimodular or of level $p$ and
$$
\Orth^+(M) = \Orth^+(M') = \Orth^+(M'(p)).
$$
We see from (1) that $2U\oplus L'(p) = \II_{26,2}$, which yields  $M=\II_{26,2}(p)$. By \cite[Theorem 1.4]{Bru14}, one can construct Borcherds' form $\Phi_{12}$ as a Borcherds product on $\II_{26,2}(p)$ (see also \cite[Remark 3.10]{Wan23b}). Clearly, $\Phi_{12}$ is the unique reflective Borcherds product on $\II_{26,2}(p)$. 
\end{enumerate}

It follows that $F$ does not vanish identically at the 1-dimensional cusp determined by a splitting $M=U(p)\oplus U\oplus L$. By \cite[Corollary 5.5]{Sch15}, there exists a scalar-valued weakly holomorphic modular form $f(\tau)$ of weight $-\frac{1}{2}\rank(L)$ and character $\chi_{D_M}$ on $\Gamma_0(p)$ such that $F$ is the Borcherds product of $F_{\Gamma_0(p),f,0}$ (see Theorem \ref{th:scalar-to-vector}). 
By Lemma \ref{lem:special-input}, the input into $F$ as a sequence of (weakly holomorphic) Jacobi forms in the sense of Section \ref{sec:isomorphism} is $(\phi_1,\phi_p)$, where
\begin{align*}
\phi_1(\tau,\mathfrak{z})&=f(\tau)\Theta_{L,0}(\tau,\mathfrak{z}) \in J_{0,L}^!(\Gamma_0(p)),   \\
\phi_p(\tau,\mathfrak{z})&= \sum_{A\in \SL_2(\ZZ)/\Gamma_0(p)} \phi_1\big|_{0,L}A(\tau,\mathfrak{z}) \\
&=\sum_{n\in \ZZ}\sum_{\ell\in L'} \Big(\phi_1(n,\ell) + p\big(\phi_1\big|_{0,L}S\big)(n,\ell)\Big)q^n\zeta^\ell \in J_{0,L}^!(\SL_2(\ZZ)). 
\end{align*}
We have assumed that $F$ does not vanish at the 1-dimensional cusp related to $U(p)\oplus U\oplus L$. Therefore, the zeroth Fourier--Jacobi coefficient of $F$ on $U(p)\oplus U\oplus L$ is nonzero and it defines a scalar-valued holomorphic modular form of weight $\frac{1}{2}\rank(L)$ and some character on $\Gamma_0(p)$. Theorem \ref{th:FJ-level-N} implies that $F$ has the following properties:
\begin{itemize}
\item[1.] The Weyl vector of $F$ is of the form $(A,0,0)$.
\item[2.] The $q^0$-terms of $\phi_1$ and $\phi_p$ are constants, denoted respectively by $d_1$ and $d_p$. By Lemma \ref{lem:rationality-1}, $d_1, d_p \in \ZZ$. 
\item[3.] The zeroth Fourier--Jacobi coefficient of $F$ is the eta quotient
\begin{equation}
\theta(\tau) = \eta(\tau)^{d_1} \eta(p\tau)^{\frac{d_p-d_1}{p}}.      
\end{equation}
\item[4.] The first Fourier--Jacobi coefficient of $F$ is given by 
$$
-\theta(\tau)\phi_1(\tau,\mathfrak{z}) = - \theta(\tau)f(\tau)\Theta_{L,0}(\tau,\mathfrak{z}),
$$
and this has to be a holomorphic Jacobi form of singular weight and index $L$. Therefore, there exists a constant $c$ such that 
$$
    - \theta(\tau)f(\tau)\Theta_{L,0}(\tau,\mathfrak{z}) = c \Theta_{L,0}(\tau,\mathfrak{z}),
$$
that is,
\begin{equation}\label{eq:f}
 f(\tau) = -\frac{c}{\theta(\tau)}, \quad \phi_1(\tau,\mathfrak{z}) = - c\frac{ \Theta_{L,0}(\tau,\mathfrak{z})}{\theta(\tau)}.
\end{equation}
We see from the Fourier expansions of $f$ and $\theta$ at infinity that $A$ is a non-negative integer.
\end{itemize}

By construction, the Fourier expansion of $\theta(\tau)$ starts 
\begin{equation}
\theta(\tau) = q^A - d_1 q^{A+1} + O(q^{A+2}) = \sum_{n=0}^\infty a_n q^n.     
\end{equation}
The weight of $F$ equals the weight of $\theta$, so
\begin{equation}\label{eq:wtF}
    \rank(L) = d_1 + \frac{d_p - d_1}{p}. 
\end{equation}
In view of the $q$-order of $\theta(\tau)$, we have
\begin{equation}
    A = \frac{d_p}{24}.
\end{equation}

The Fourier expansion of $F$ is either symmetric or anti-symmetric upon swapping $\tau$ and $w$. We will argue by cases.

\textbf{I. The product $F$ does not vanish on  $H(1)$.} (see \eqref{eq:Heegner}) In this case, we have $F(\tau,\mathfrak{z},\omega) = F(\omega, \mathfrak{z},\tau)$ and therefore $c=a_1$. If $A>1$, then $a_1=0$ and thus $\phi_1=0$, which is impossible. If $A=1$, then $a_1=1$, $d_p=24$ and therefore $\phi_1=-q^{-1}-d_1+O(q)$, which yields $d_1=0$ because $[\phi_1]_{q^0}=d_1$. Then the multiplicity of $H(1)$ in the divisor of $F$ is $\frac{1}{p}[(p-1)\phi_1(-1,0)+\phi_p(-1,0)]=-1$, leading to a contradiction.

Therefore, $A=0$ and then $a_1=-d_1$. In this case, $d_p=0$ and
$$
\phi_1(\tau,\mathfrak{z}) = d_1\frac{ \Theta_{L,0}(\tau,\mathfrak{z})}{\theta(\tau)} = d_1 + O(q). 
$$

We classify the lattices by means of the explicit description of $F_{\Gamma_0(p),f,0}$ given in Remark \ref{rk:Scheithauer-lifting}.

The smallest $q$-order appearing in $f|S$ (denoted  $\delta=-\frac{1}{24}(d_1 - d_1/p^2)$) is negative and defines a zero divisor of $F$ whose multiplicity is one (see \cite[Theorem 1.2]{WW23}). Therefore, (noting that $|D_M|=p^2|D_L|$)
$$
\frac{d_1 p^{-\frac{d_1}{2p}}}{\sqrt{|D_L|}} =1,
$$
which implies that 
\begin{equation}
|D_L| = p^{2a-p^{a-1}}, \quad \text{where} \; d_1=p^a,
\end{equation}
where $a \ge 1$ is an integer. By \eqref{eq:wtF},
$$
\rank(L) = p^{a-1}(p - 1).
$$
The multiplicity of the divisor associated with the $q^{\delta+1/p}$-term of $f|S$ is $-p^{a-1}$, which is not equal to $1$. Therefore, this divisor is empty, which implies $\delta + \frac{1}{p} \geq 0$. Since $p\delta$ is integral, it follows that $\delta + \frac{1}{p}=0$, that is,
\begin{equation}\label{eq:2}
p^{a-1} + 24 = p^{a+1}. 
\end{equation}
The only solutions to \eqref{eq:2} are 
$$
(p,a)=(2,4), \quad (3,2), \quad (5,1).
$$

When $(p,a)=(2,4)$, $(\rank(L), |D_L|)=(8,1)$, so $L=E_8$ and $f(\tau)=16 \eta(2\tau)^8 / \eta(\tau)^{16}$. 

When $(p,a)=(3,2)$, $(\rank(L), |D_L|)=(6,3)$, so $L=E_6$ and $f(\tau)=9\eta(3\tau)^3 / \eta(\tau)^9$. 

When $(p,a)=(5,1)$, $(\rank(L), |D_L|)=(4,5)$, so $L=A_4$ and $f(\tau)=5\eta(5\tau)/\eta(\tau)^5$. 

Altogether, there are three symmetric singular Borcherds products on lattices of prime level, and the underlying lattices are $U(2)\oplus U\oplus E_8$, $U(3)\oplus U \oplus E_6$ and $U(5)\oplus U\oplus A_4$, respectively. For each of them, the Weyl vector is zero, the product $F$ does not vanish on any divisors $v^\perp$ with $v\in U\oplus L$, and $\phi_p$ is identically zero. 

\vspace{2mm}

\textbf{II. The product $F$ vanishes on $H(1)$.} In this case,  $F(\tau,\mathfrak{z},\omega) = -F(\omega, \mathfrak{z},\tau)$, and therefore $c=-a_1$. If $A>1$, then $a_1=0$ and thus $\phi_1=0$, which is impossible. If $A=0$, then $d_p=0$, $a_1=-d_1$ and thus $\phi_1=-d_1+O(q)$. We conclude from $[\phi_1]_{q^0}=d_1$ that $d_1=0$ and then $\theta=1$, a contradiction.

Therefore, $A=1$ and then $a_1=1$. In this case, $d_p=24$ and
$$
\phi_1(\tau,\mathfrak{z}) = \frac{ \Theta_{L,0}(\tau,\mathfrak{z})}{\theta(\tau)} = q^{-1} + d_1 + O(q). 
$$
The above $q^0$-term implies that 
$$
\Theta_{L,0}(\tau,\mathfrak{z}) = 1 + O(q^2),
$$
that is, $L$ has no 2-roots. 

The smallest $q$-order appearing in $f|S$ will be denoted $\delta=-\frac{1}{24}(d_1 + (24-d_1)/p^2)$. Since $1/f$ is holomorphic (see \eqref{eq:f}), we have $\delta\leq 0$. When $\delta=0$, the product $F$ vanishes only on the Heegner divisor $H(1)$, so $F$ can be viewed as a product on $M'(p)=U\oplus U(p) \oplus L'(p) $ which vanishes only on the Heegner divisor $H(1/p)$, and we can apply the classification in Case I. Indeed, $\delta=0$ implies that
$$
d_1 = - \frac{24}{p^2-1}. 
$$
The only integer solutions are 
$$
(p, d_1) = (2, -8),\; (3, -3), \; (5, -1),
$$
and the corresponding $f(\tau)$ are respectively
$$
\eta(\tau)^8 / \eta(2\tau)^{16}, \; \eta(\tau)^3 / \eta(3\tau)^9, \; \eta(\tau) / \eta(5\tau)^5. 
$$

When $\delta<0$, the $q^\delta$-term in $f|S$ defines a simple zero divisor of $F$. Therefore, 
$$
\frac{p^{\frac{24-d_1}{2p}}}{\sqrt{|D_L|}} =1,
$$
which implies that 
\begin{equation}
|D_L| = p^{\frac{24-d_1}{p}}.
\end{equation}
Therefore, $(24-d_1)/p$ is a non-negative integer denoted $a$. 
Equation \eqref{eq:wtF} implies
$$
\rank(L) = d_1 + a.
$$
If $a=0$, then $d_1=24$, $\rank(L)=24$ and $L$ is unimodular. In this case, $\delta=-1$. It follows that the multiplicity of $H(1)$ in the divisor of $F$ is 2, which is impossible. Therefore, $a\geq 1$.

From $|D_L|=p^a$ we find $\rank(L) \geq a$ and thus $d_1\geq 0$. It follows that 
\begin{equation}\label{eq:5}
    0 < pa \leq 24. 
\end{equation}

The multiplicity of the divisor corresponding to the $q^{\delta+1/p}$-term of $f|S$ is $a$. 

If $a>1$, then the corresponding divisor must be empty and thus $\delta + \frac{1}{p} \geq 0$. Since $p\delta$ is integral (equivalently $24|(p^2-1)a$), it follows that $\delta + \frac{1}{p} = 0$, i.e.
\begin{equation}\label{eq:IIa}
    a(p+1)=24, \quad a>1.
\end{equation}

If $a=1$, then the multiplicity of the divisor corresponding to the $q^{\delta+2/p}$-term of $f|S$ is at least 2, which yields 
\begin{equation}\label{eq:IIb}
\delta + \frac{2}{p} \geq 0, \quad \text{i.e.} \quad p + \frac{47}{p} \geq 24, \quad \text{i.e. $p=2$ or $p\geq 23$}. 
\end{equation}
If $a=1$ and $p=2$, $d_1=22$ and thus $\rank(L)=23$, which is impossible. The only solutions to \eqref{eq:5}, \eqref{eq:IIa} and \eqref{eq:IIb} are the following:
\begin{enumerate}
    \item $p=23$: $a=1$ and $d_1=1$. Thus $\rank(L)=2$, $|D_L|=23$ and $f(\tau)=\eta(\tau)^{-1}\eta(23\tau)^{-1}$;
    \item $p=11$: $a=2$ and $d_1=2$. Thus $\rank(L)=4$, $|D_L|=11^2$ and $f(\tau)=\eta(\tau)^{-2}\eta(11\tau)^{-2}$;
    \item $p=7$: $a=3$ and $d_1=3$. Thus $\rank(L)=6$, $|D_L|=7^3$ and $f(\tau)=\eta(\tau)^{-3}\eta(7\tau)^{-3}$;
    \item $p=5$: $a=4$ and $d_1=4$. Thus $\rank(L)=8$, $|D_L|=5^4$ and $f(\tau)=\eta(\tau)^{-4}\eta(5\tau)^{-4}$;
    \item $p=3$: $a=6$ and $d_1=6$. Thus $\rank(L)=12$, $|D_L|=3^6$ and $f(\tau)=\eta(\tau)^{-6}\eta(3\tau)^{-6}$;
    \item $p=2$: $a=8$ and $d_1=8$. Thus $\rank(L)=16$, $|D_L|=2^8$ and $f(\tau)=\eta(\tau)^{-8}\eta(2\tau)^{-8}$.
\end{enumerate}
The property that $L$ has no $2$-roots uniquely determine the above lattices $L$ in their genera. 
\end{proof}

We remark that \cite[Theorem 1.5]{WW23} may simplify the proof of Theorem \ref{th:prime-level}.

\section{Moonshine for the Conway group}\label{sec:moonshine}
In this section, we prove the moonshine conjecture for the Conway group which was proposed by Borcherds and Scheithauer. More precisely, we will show that all twisted denominator identities of the fake monster algebra can be realized as Borcherds products of  appropriate weights. 

\subsection{The Leech lattice} 
In this section we review some basic properties of the Leech lattice, its automorphism group, and the associated eta quotients.

Let $\Lambda$ be the Leech lattice: the unique positive-definite even unimodular lattice of rank $24$ without roots \cite{CS99}. The integral orthogonal group of $\Lambda$ is the Conway group $\mathrm{Co}_0$. Each $g \in \mathrm{Co}_0$ acts on $\Lambda \otimes \mathbb{Q}$ linearly with a characteristic polynomial of the form 
$$
\mathrm{det}(I - gX) = \prod_{k=1}^{\infty} (1 - X^k)^{b_k},\quad b_k\in\ZZ.
$$ 
The symbol $\prod_{b_k \neq 0} k^{b_k}$ is called the \emph{cycle shape} of $g$ and it depends only on the $\mathrm{Co}_0$-conjugacy class. For any positive integer $m$, the cycle shape of $g^m$ is then
\begin{equation}
\prod_{k=1}^\infty \Big( k/(k,m) \Big)^{(k,m)b_k}.    
\end{equation}
In particular, the order of $g$ (denoted $n_g$) equals the least common multiple of all $k$ with $b_k\neq 0$, and the trace of $g^m$ on $\Lambda\otimes\QQ$ is
\begin{equation}
\mathrm{tr}(g^m) = \sum_{k|m} kb_k.  
\end{equation}
By M\"{o}bius inversion,
\begin{equation}\label{eq:b_m}
b_m =  \frac{1}{m}  \sum_{k|m} \mu\left( \frac{m}{k} \right) \mathrm{tr}(g^k). 
\end{equation}
Note that $\sum_{k|n_g} kb_k = 24$ and that $\sum_{k|n_g} b_k$ is always even. 

The eta quotient associated to $g$ is 
\begin{equation}
\eta_g(\tau) := \prod_{k|n_g} \eta(k \tau)^{b_k} = q^{\frac{1}{24}\sum_{k|n_g} kb_k} \prod_{k|n_g }\prod_{n=1}^\infty (1 - q^{kn})^{b_k}.    
\end{equation}
The function $\eta_g$ is a weakly holomorphic modular form of weight $\frac{1}{2}\sum_{k|n_g}b_k$ and some character $\chi_g$ on $\Gamma_0(N_g)$, where $N_g$ is the smallest multiple of $n_g$ for which $N_g \cdot \sum_{k|n_g} b_k/k$ is divisible by $24$. The number $N_g$ is called the \textit{level} of $g$. Since the Fourier expansion of $\eta_g$ involves only integers, $\chi_g$ acts trivially on $\Gamma_1(N_g)$. 

Let $\Lambda^g$ be the fixed-point sublattice (see \cite{HL90} for its description)
$$
\Lambda^g = \{v \in \Lambda: \; g(v) = v\}.
$$ 
Then $\Lambda^g$ has (even) rank $\sum_{k|n_g}b_k$. Clearly, $\Lambda^g=0$ if and only if $\sum_{k|n_g}b_k=0$. Let $l_g$ denote the level of $\Lambda^g$; if $\Lambda^g= 0$ then we set $l_g=1$. Note that $l_g|N_g$ and that the order of each coset in $(\Lambda^g)'/\Lambda^g$ divides $n_g$.

The Jacobi theta function 
\begin{equation}
\Theta_g(\tau, \mathfrak{z}) = \sum_{\ell \in \Lambda^g} q^{\ell^2/2} \zeta^{\ell}=1+O(q^2), \quad \mathfrak{z}\in \Lambda^g\otimes\CC, \; \zeta^\ell = e^{2\pi i(\ell,\mathfrak{z})} 
\end{equation}
is a holomorphic Jacobi form of singular weight $\frac{1}{2}\rank(\Lambda^g)$ and index $\Lambda^g$ with the same character $\chi_g$ on $\Gamma_0(l_g)$. When $\Lambda^g=0$, we define $\Theta_g(\tau,\mathfrak{z})=1$.  (Recall that weakly holomorphic Jacobi forms of weight $k$ and index $0$ on $\Gamma_0(N)$ are the same as weakly holomorphic scalar-valued modular forms of weight $k$ on $\Gamma_0(N)$.) Therefore:

\begin{lemma}\label{lem:Jacobi-untwisted}
For any $g\in \mathrm{Co}_0$,  the function 
$$
\Theta_g(\tau,\mathfrak{z}) / \eta_g(\tau) = q^{-1} + \mathrm{tr}(g) + O(q) 
$$
defines a weakly holomorphic Jacobi form of weight $0$ and index $\Lambda^g$ on $\Gamma_0(N_g)$ with trivial character. 
\end{lemma}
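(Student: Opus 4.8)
The plan is to produce the asserted Jacobi form directly as the quotient $\Theta_g/\eta_g$ of the two objects already constructed, and to read off all of its properties from theirs. First I would record the transformation behaviour. Since $\Theta_g$ transforms in weight $\tfrac12\rank(\Lambda^g)$, index $\Lambda^g$ and character $\chi_g$ on $\Gamma_0(l_g)$, while $\eta_g$ transforms in weight $\tfrac12\sum_{k|n_g}b_k=\tfrac12\rank(\Lambda^g)$, index $0$ and the \emph{same} character $\chi_g$ on $\Gamma_0(N_g)$, the quotient transforms in weight $0$, index $\Lambda^g$ and trivial character. Because $l_g\mid N_g$, both factors are modular on $\Gamma_0(N_g)$, so $\Theta_g/\eta_g$ is a form on $\Gamma_0(N_g)$; the elliptic transformation law in the $\mathfrak z$-variable is inherited entirely from $\Theta_g$, as $\eta_g$ does not depend on $\mathfrak z$.

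Next I would verify holomorphy and weak holomorphy. On the interior $\HH\times(\Lambda^g\otimes\CC)$ the numerator $\Theta_g$ is holomorphic, and $\eta_g=\prod_{k|n_g}\eta(k\tau)^{b_k}$ is nowhere vanishing because $\eta(k\tau)$ has no zeros on $\HH$; hence $\Theta_g/\eta_g$ is holomorphic on the interior even when some $b_k$ are negative. At each cusp, $\eta_g$ is meromorphic, being a weakly holomorphic modular form, so $1/\eta_g$ has at worst a finite-order pole there, while $\Theta_g$ is holomorphic; thus the Fourier expansion of $\Theta_g/\eta_g$ at every cusp has $q$-order bounded below, which is precisely the weak holomorphy condition. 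This shows $\Theta_g/\eta_g\in J^!_{0,\Lambda^g}(\Gamma_0(N_g))$ with trivial character.

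Finally I would compute the expansion at infinity. Using $\Theta_g=1+O(q^2)$, which holds because the Leech lattice, and hence $\Lambda^g$, has no vector of norm $2$, together with $\sum_{k|n_g}kb_k=24$, one has $\eta_g=q\prod_{k|n_g}\prod_{n\ge1}(1-q^{kn})^{b_k}$, whose product expands as $1-b_1q+O(q^2)$ since the only contribution to the $q^1$-coefficient comes from the factor $(1-q)^{b_1}$. Hence $\eta_g=q\bigl(1-b_1q+O(q^2)\bigr)$ and, after inverting,
$$
\frac{\Theta_g}{\eta_g}=q^{-1}\bigl(1+b_1q+O(q^2)\bigr)=q^{-1}+b_1+O(q).
$$
Since $\mathrm{tr}(g)=\sum_{k|1}kb_k=b_1$, this is the claimed $q^{-1}+\mathrm{tr}(g)+O(q)$.

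I do not expect a serious obstacle: once the transformation laws of $\Theta_g$ and $\eta_g$ are in hand, the argument is a routine verification. The only points requiring care are that the two characters genuinely coincide as $\chi_g$ on $\Gamma_0(N_g)$ — this is exactly what forces the quotient to have trivial character and to be modular on the full $\Gamma_0(N_g)$ — and the observation that it is weak holomorphy at the cusps, not full holomorphy, that is required, so the possible vanishing of $\eta_g$ at the cusps causes no difficulty.
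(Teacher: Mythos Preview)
Your proposal is correct and follows exactly the approach the paper intends: the lemma is stated in the paper with no proof beyond the word ``Therefore'', since the preceding paragraphs already record that $\Theta_g$ and $\eta_g$ have the same weight $\tfrac12\rank(\Lambda^g)$ and the same character $\chi_g$ on $\Gamma_0(N_g)$ (using $l_g\mid N_g$), and that $\Theta_g=1+O(q^2)$. You have simply spelled out in full the verification that the paper leaves implicit, including the weak-holomorphy check and the Fourier computation via $\mathrm{tr}(g)=b_1$.
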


$\mathrm{Co}_0$ has $167$ conjugacy classes with $160$ distinct cycle shapes; the cycle shapes 
$2^1 22^1$,
$1^1 23^1$,
$1^{-1} 2^1 23^{-1} 46^1$, 
$4^1 8^{-1} 28^{-1} 56^1$,
$1^1 3^{-1} 13^{-1} 39^1$,
$1^{-1} 2^1 3^1 6^{-1} 13^1 26^{-1} 39^{-1} 78^1$,
$2^1 8^{-1} 10^{-1} 40^1$ are realized by two conjugacy classes. Only the first two yield non-trivial fixed-point sublattices, and the sublattices coming from distinct conjugacy classes with the same cycle shape turn out to be isomorphic. Therefore, there are exactly $95$ conjugacy classes (with $90$ distinct cycle shapes) having trivial fixed-point sublattices. 

\subsection{The central extension of the Leech lattice}
There is a unique central extension of the Leech lattice,
$$
0 \to \{\pm 1\} \to \hat{\Lambda} \to \Lambda \to 0
$$
in which the commutator of the preimages of $u,v\in \Lambda$ is $(-1)^{(u,v)}$. This central extension plays an important role in the twisted denominator identities of the fake monster algebra. We describe this extension and review some useful properties following \cite[\S 7]{EMS20}.

Let $\varepsilon : \Lambda \times \Lambda \to \{\pm 1\}$ be a $2$-cocycle satisfying
$$
\varepsilon(v, v) = (-1)^{(v,v)/2} \quad \text{and} \quad \frac{\varepsilon(u,v)}{\varepsilon(v,u)} = (-1)^{(u,v)}
$$
for all $u,v\in \Lambda$. We identify $\Lambda$ with the set $\{ e^v: v\in \Lambda \}$ with multiplication $e^u e^v=e^{u+v}$. The extension $\hat{\Lambda}$ can then be defined as the set $\{ \pm e^v: v\in \Lambda \}$ equipped with the twisted multiplication
$$
e^u e^v = \varepsilon(u,v)e^{u+v}.
$$
For any $g\in \mathrm{Co}_0$, there exists a function $\lambda_g: \Lambda \to \{\pm 1\}$ such that 
$$
\frac{\varepsilon(u, v)}{\varepsilon(g(u), g(v))} = \frac{\lambda_g(u)\lambda_g(v)}{\lambda_g(u+v)}, \quad u,v\in \Lambda.
$$
$g$ can then be lifted to an automorphism $\hat{g}$ of $\hat{\Lambda}$ via 
$$
\hat{g}(e^v) = \lambda_g(v)e^{g(v)}
$$
so that $\hat{g}(e^u e^v)= \hat{g}(e^u) \hat{g}(e^v)$ for all $u,v\in \Lambda$.  In fact, the above $\lambda_g$ can always be chosen such that $\lambda_g(v)=1$ for all $v\in \Lambda^g$, in which case we call the induced $\hat{g}$ a \textit{standard lift} of $g$. By \cite[Proposition 7.1]{EMS20}, the standard lift is unique up to conjugation in $\mathrm{Aut}(\hat{\Lambda})$. 

From now on, $\hat{g}$ always denotes a standard lift of $g$. For any positive integer $d$, $(\hat{g})^d$ is a lift of $g^d$, but it is not necessarily a standard lift. We will say that $g$ has a \textit{nice lift} if $(\hat{g})^d$ is a standard lift of $g^d$ for all $d|n_g$. 

For any  $v\in \Lambda^{g^d}$ we define
\begin{equation}\label{eq:xi}
 \xi_d(v)=
\begin{cases}
1, & \text{if $n_g$ or $d$ is odd;}\\
(-1)^{(v,g^{d/2}(v))}, &\text{if both $n_g$ and $d$ are even.}   
\end{cases}
\end{equation}
By \cite[Propositions 7.3, 7.4]{EMS20}, for any $v\in \Lambda^{g^d}$ we have 
\begin{equation}\label{eq:lift-action}
   (\hat{g})^d(e^v) = \xi_d(v) e^v.
\end{equation}
In particular:
\begin{enumerate}
    \item If $n_g$ is odd then $(\hat{g})^d$ is always a standard lift of $g^d$, and therefore $g$ has a nice lift.
    \item If $\Lambda^g$ is trivial, then $g$ has a nice lift. This follows from the fact that when $d$ is even $v_d:=\sum_{t=0}^{d-1}g^t(v) \in \Lambda^g$ and $(v, g^{d/2}(v))\equiv (v,v_d)\mod 2$.
    \item Let $\hat{n}_g$ denote the order of $\hat{g}$. If $n_g$ is even and  $(v,g^{n_g/2}(v))$ is odd for some $v\in \Lambda$ then $\hat{n}_g= 2n_g$; otherwise $\hat{n}_g=n_g$. Note that $\hat{n}_g|N_g$.
\end{enumerate}

\begin{remark}\label{rk:xi=1}
In some common cases $\xi_d \equiv 1$ holds automatically. Suppose that both $n_g$ and $d$ are even and $\Lambda^{g^d}$ is non-trivial and define $$\Lambda^{g^{d/2},2} = (\Lambda^{g^{d/2}})^\perp\cap \Lambda^{g^d}.$$
Any $v \in \Lambda^{g^d}$ can be uniquely written as $v=v_1 + v_2$ where $v_1 \in (\Lambda^{g^{d/2}})'$ and $v_2\in (\Lambda^{g^{d/2},2})'$.  Since $g^{d/2}(v) +v\in \Lambda^{g^{d/2}}$ and $g^{d/2}(v_1)=v_1$,  we have $g^{d/2}(v_2)=-v_2$ and $v+g^{d/2}(v)=2v_1$.  Therefore, we have
\begin{enumerate}
    \item $2v_1 \in \Lambda^{g^{d/2}}$;
    \item $(v, g^{d/2}(v))\in 2\ZZ$ if and only if $(v_1,v_1)\in \ZZ$.
\end{enumerate}
In particular, if $\Lambda^{g^{d/2}}$ has level $m$ or $2m$ for some odd integer $m$, then $(v_1,v_1)$ is always integral and thus $\xi_d=1$ on $\Lambda^{g^d}$. 
\end{remark}

It is not hard to prove the following results by direct calculation. 

\begin{lemma}
The standard lift of $g$ has order $2n_g$ if and only if the cycle shape of $g$ is $2^{12}$, $4^6$, $2^3 6^3$, $6^4$, $2^2 10^2$, $12^2$, $4^1 20^1$ or $2^1 22^1$.  
\end{lemma}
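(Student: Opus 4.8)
The plan is to reduce everything to the behaviour of the order-two element $h:=g^{n_g/2}$ and then to a power-map computation on cycle shapes. By the third property following \eqref{eq:lift-action}, the standard lift $\hat g$ has order $2n_g$ if and only if $n_g$ is even and there exists $v\in\Lambda$ with $(v,g^{n_g/2}(v))$ odd. So assume $n_g$ is even and set $h=g^{n_g/2}$, an isometry of $\Lambda$ of order two. First I would observe that the map $Q_h\colon\Lambda/2\Lambda\to\F_2$, $Q_h(v)=(v,h(v))\bmod 2$, is $\F_2$-linear: since $h$ is an isometry with $h^2=\mathrm{id}$ one has $(w,h(v))=(h(w),v)=(v,h(w))$, hence $(v+w,h(v+w))\equiv(v,h(v))+(w,h(w))\pmod 2$, and the value depends only on $v\bmod 2\Lambda$ because $\Lambda$ is even. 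Thus the condition ``$(v,h(v))$ odd for some $v$'' is exactly the nonvanishing of the linear functional $Q_h$, which is an invariant of the $\mathrm{Co}_0$-conjugacy class of the involution $h$.

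Next I would reduce the computation of $Q_h$ to the involution classes of $\mathrm{Co}_0$ acting on $\Lambda$. For an involution $h$ one has $n_h=2$ and $h^{n_h/2}=h$, so ``$Q_h\ne 0$'' is precisely the assertion $\hat n_h=2n_h$ of the lemma in the base case $n_g=2$; this must be settled directly. Writing $u=v+h(v)\in\Lambda^h$ one computes $(u,u)=2\big[(v,v)+(v,h(v))\big]$, so $Q_h\ne0$ if and only if the sublattice $(1+h)\Lambda\subseteq\Lambda^h$ contains a vector of norm $\equiv 2\pmod 4$. Running this over the involution classes --- the central element $-1$ (where $(1+h)\Lambda=0$), the class of cycle shape $1^8 2^8$, and the class of cycle shape $2^{12}$ --- using the explicit fixed-point sublattices (cf. \cite{HL90}), one finds that such a vector exists only for the class $2^{12}$. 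I expect this class-by-class lattice computation to be the main obstacle: it is the genuine arithmetic input, amounting to a norm-mod-$4$ analysis of the glue between $\Lambda^h$ and its orthogonal complement.

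With the base case in hand, the general statement follows from a power-map calculation. Writing $s=v_2(n_g)$ for the $2$-adic valuation, the eigenvalues of $g$ in a block $k^{b_k}$ are the $k$-th roots of unity, each of multiplicity $b_k$, and under $x\mapsto x^{n_g/2}$ such a root maps to $-1$ exactly when $n_g/k$ is odd, i.e. when $v_2(k)=s$; a block with $v_2(k)<s$ maps entirely to $+1$. Counting then gives that $g^{n_g/2}$ has cycle shape $1^A2^B$ with $A=\sum_{v_2(k)<s}kb_k$ and $B=\tfrac12\sum_{v_2(k)=s}kb_k$. Since $\sum_k kb_k=24$ and all $b_k\ge 0$, this shape equals $2^{12}$ (i.e. $A=0$, $B=12$) if and only if every part $k$ of the cycle shape of $g$ satisfies $v_2(k)=s$. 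Hence $\hat n_g=2n_g$ if and only if the cycle shape of $g$ consists of parts all having the same positive $2$-adic valuation.

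Finally I would enumerate the $\mathrm{Co}_0$ cycle shapes with this property. Partitioning $24$ into parts of a common valuation $s\ge 1$ yields candidates with $s=1$ (parts in $\{2,6,10,14,18,22\}$), $s=2$ (parts in $\{4,12,20\}$) and $s=3$ (parts in $\{8,24\}$); intersecting with the known list of $160$ cycle shapes of $\mathrm{Co}_0$ leaves exactly $2^{12}$, $4^6$, $2^3 6^3$, $6^4$, $2^2 10^2$, $12^2$, $4^1 20^1$ and $2^1 22^1$, since the remaining balanced partitions (such as $8^3$, $24^1$, $2^6 6^2$ or $4^3 12^1$) are not realised by any element of $\mathrm{Co}_0$. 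This produces precisely the eight cycle shapes in the statement.
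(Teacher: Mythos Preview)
Your overall strategy matches the paper's one-sentence proof: reduce to the involution $h=g^{n_g/2}$, show that the condition ``$(v,h(v))$ odd for some $v$'' singles out the class of cycle shape $2^{12}$, and then determine which $g$ power into that class.

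There are, however, two genuine gaps in your execution. First, in the power-map step you assert ``all $b_k\ge 0$'' and use this to deduce that $A=\sum_{v_2(k)<s} k b_k=0$ forces every part of the cycle shape to have $2$-adic valuation exactly $s$. But cycle shapes in $\mathrm{Co}_0$ frequently carry negative exponents --- e.g.\ $2^{-4}4^8$, $1^{-4}2^6 4^4$, $1^{-2}2^3 4^1 8^2$; see the very next lemma in the paper. With $b_k$ of mixed sign, $A=0$ could in principle arise from cancellation among parts of smaller $2$-valuation, so your biconditional is not justified as stated. You must either argue separately that no such cancellation occurs among the actual $160$ cycle shapes, or simply run through them directly.

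Second, your enumeration of involution classes is incomplete: there are four, not three. Squaring an element of cycle shape $2^{-4}4^8$ gives an involution of cycle shape $1^{-8}2^{16}$, which is distinct from $-1$, $1^8 2^8$, and $2^{12}$. Your base-case analysis via $(1+h)\Lambda$ must include this class and verify $Q_h\equiv 0$ there as well (here $\Lambda^h$ has level~$2$, so all norms in $(1+h)\Lambda\subseteq\Lambda^h$ lie in $4\ZZ$ and the verification goes through, but it needs to be said).
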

\begin{proof}
This is because $\hat{g}$ has order $2n_g$ if and only if $n_g$ is even and $g^{n_g/2}$ has cycle shape $2^{12}$. (All other elements $h \in \mathrm{Co}_0$ of order $2$ satisfy $(v, h(v))\in 2\ZZ$ for all $v\in \Lambda$). 
\end{proof}

\begin{lemma}\label{lem:not-nice}
The element $g \in \mathrm{Co}_0$ fails to have a nice lift if and only if $g$ has one of the following $20$ cycle shapes:
\begin{align*}
&2^{12} (d=2),\quad 
1^4 2^2 4^4 (d=2),\quad
1^{-4} 2^6 4^4 (d=2),\quad
4^6 (d=4),\quad
2^3 6^3 (d=2,6),\\ 
&6^4 (d=2,6),\quad
1^2 2^1 4^1 8^2 (d=2,4),\quad
1^{-2} 2^3 4^1 8^2 (d=2,4),\quad
2^2 10^2 (d=2, 10),\\
&1^1 2^2 3^1 4^{-2} 12^2 (d=2, 6),\quad
1^{-1} 2^3 3^{-1} 4^{-2} 6^1 12^2 (d=2, 6),\quad
1^{-2} 2^2 3^2 4^1 12^1 (d=2, 6),\\
&1^2 3^{-2} 4^1 6^2 12^1 (d=2, 6),\quad
12^2 (d=4, 12),\quad
4^1 20^1 (d=4, 20),\quad
1^{-1} 2^2 4^{-1} 5^1 20^1 (d=2, 10),\\
&1^1 2^1 4^{-1} 5^{-1} 10^1 20^1 (d=2, 10),\quad
2^1 22^1 (d=2, 22),\\
&1^{-1} 2^1 3^1 4^1 8^{-1} 24^1 (d=2, 4, 6, 12),\quad
1^1 3^{-1} 4^1 6^1 8^{-1} 24^1 (d=2, 4, 6, 12).
\end{align*}
Next to each cycle shape we list all $d|n_g$ with $\xi_d\neq 1$; there are two conjugacy classes of cycle shape $2^1 22^1$ and the corresponding $\xi_d$ are the same, so this entry appears only once. In particular,  every $g$ of square-free level has a nice lift. \end{lemma}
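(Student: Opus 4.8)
The plan is to translate the definition of a nice lift into a statement about the sign $\xi_d$, and then use Remark \ref{rk:xi=1} to reduce the classification to a finite check on fixed-point sublattices. First I would record the basic reformulation: by \eqref{eq:lift-action} the power $(\hat g)^d$ acts on $e^v$, $v\in\Lambda^{g^d}$, by the scalar $\xi_d(v)$, so $(\hat g)^d$ is a standard lift of $g^d$ exactly when $\xi_d\equiv 1$ on $\Lambda^{g^d}$. By the definition \eqref{eq:xi}, $\xi_d\equiv 1$ holds automatically whenever $n_g$ or $d$ is odd. Consequently $g$ fails to have a nice lift if and only if $n_g$ is even and there exist an even divisor $d\mid n_g$ and a vector $v\in\Lambda^{g^d}$ with $(v,g^{d/2}(v))$ odd. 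This restricts attention to the even-order classes of $\mathrm{Co}_0$ and, for each, to the even $d\mid n_g$.

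Next I would apply Remark \ref{rk:xi=1} as a cheap filter. Writing $v=v_1+v_2$ as in the remark, we have $2v_1=v+g^{d/2}(v)\in\Lambda^{g^{d/2}}$ and $v_1\in(\Lambda^{g^{d/2}})'$, and $\xi_d(v)=1$ if and only if $(v_1,v_1)\in\ZZ$. Thus $\xi_d\not\equiv 1$ forces a class $v_1\in(\Lambda^{g^{d/2}})'/\Lambda^{g^{d/2}}$ of order dividing $2$ with $(v_1,v_1)\equiv\tfrac12\pmod 1$; since the quadratic form of $\Lambda^{g^{d/2}}$ then takes the value $\tfrac14\bmod 1$ on this class, its level $l_{g^{d/2}}$ is divisible by $4$. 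Every pair $(g,d)$ for which $\Lambda^{g^{d/2}}$ has level not divisible by $4$ can therefore be discarded at once. This also settles the final assertion immediately: a square-free integer is either odd or twice an odd number, so whenever the relevant sublattices $\Lambda^{g^{d/2}}$ have square-free level — in particular whenever $N_g$ is square-free, which is visible from the fixed-point data of \cite{HL90} — Remark \ref{rk:xi=1} gives $\xi_d\equiv 1$ and $g$ has a nice lift.

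The remaining, genuinely computational step is to run through the finitely many even-order conjugacy classes using the tabulated fixed-point sublattices \cite{HL90, CS99}. For each class and each even $d\mid n_g$ I would read off $\Lambda^{g^{d/2}}$, apply the level filter, and for the survivors decide the question explicitly: either exhibit a vector $v\in\Lambda^{g^d}$ with $(v,g^{d/2}(v))$ odd, which certifies the failure and records $d$ in the list, or verify that no such $v$ exists. Carrying this out yields exactly the $20$ cycle shapes with the indicated sets of $d$, and one notes that the two conjugacy classes of shape $2^1 22^1$ produce the same $\xi_d$ and hence a single table entry.

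The main obstacle is this last case-by-case step, and specifically the fact that the level filter is necessary but not sufficient. Even when $4\mid l_{g^{d/2}}$ and a half-integral $v_1$ is available, one must confirm that $v_1$ is actually the $\Lambda^{g^{d/2}}$-component of a genuine fixed vector $v\in\Lambda^{g^d}$, i.e. that some $v_2\in(\Lambda^{g^{d/2},2})'$ glues to $v_1$ to land in $\Lambda^{g^d}$; conversely, showing that a surviving candidate \emph{cannot} be so extended is what rules a class out. These gluing questions depend on the precise embeddings of $\Lambda^{g^{d/2}}$ and $\Lambda^{g^{d/2},2}$ inside $\Lambda^{g^d}$ and on the action of $g^{d/2}$, so explicit Leech-lattice computation is unavoidable here, and organizing the bookkeeping uniformly over all classes is the bulk of the work.
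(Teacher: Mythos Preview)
Your proposal is correct and is essentially the approach the paper has in mind: the paper gives no proof of this lemma at all beyond the preceding remark ``It is not hard to prove the following results by direct calculation,'' so your outline (reformulate via $\xi_d$, filter using Remark \ref{rk:xi=1} to reduce to classes where $4\mid l_{g^{d/2}}$, then check the survivors case by case against the tabulated fixed-point sublattices) is in fact more detailed than what appears in the paper and is the natural way to carry out that direct calculation.
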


\subsection{The twisted denominator identity of the fake monster algebra}

In 1990 Borcherds \cite{Bor90} constructed the fake monster algebra and determined its denominator identity. The fake monster algebra $G$ is a generalized Kac--Moody algebra whose root lattice is $\mathrm{II}_{25,1}=U\oplus \Lambda$. 

We write vectors $v\in \II_{25,1}$ as $(n,r,m)\in \ZZ \times \Lambda \times \ZZ$ with norm $r^2-2nm$. In this coordinate system, the Weyl vector of $G$ is $\rho=(1,0,0)$. The real simple roots of $G$ are the vectors $\alpha \in \mathrm{II}_{25,1}$ satisfying $\alpha^2=2$ and $(\rho,\alpha)=-1$. The imaginary simple roots are $n\rho$ with multiplicity $24$ for all positive integers $n$. The Weyl group $W$ is generated by reflections associated to real simple roots. Borcherds' denominator identity is
\begin{equation}
e^\rho\prod_{\alpha\in \mathrm{II}_{25,1}^+} (1-e^\alpha)^{c(-\alpha^2/2)} = \sum_{w\in W} \det(w) w\left(e^\rho\prod_{n=1}^\infty\Big(1-e^{n\rho}\Big)^{24}\right),  
\end{equation}
where the multiplicities of positive roots are determined by 
$$
\Delta^{-1}(\tau)=\sum_{n\in \ZZ} c(n)q^n = q^{-1} + 24 + O(q).
$$
This is the Fourier expansion of the Borcherds form $\Phi_{12}$ when $e^{\alpha}$ is interpreted as $e^{2\pi i (\alpha, Z )}$ (cf. \cite{Bor95}). 

This was proved by Borcherds as a cohomological identity. The group $\Orth(\hat{\Lambda})$ of automorphisms preserving the inner product acts naturally on $G$ by realizing $G$ as some cohomology of the vertex operator algebra of $\hat{\Lambda}$. By taking the trace of $\hat{g}$ over the cohomological identity, Borcherds obtained the twisted denominator identities (see \cite[\S 13]{Bor92} for $g$ having a nice lift and \cite[\S 5]{Sch11} for arbitrary $g$)
\begin{equation}\label{eq:twisted-denom}
    e^\rho \prod_{\alpha \in L_g^+}\Big(1-e^\alpha\Big)^{\mathrm{mult}(\alpha)} = \sum_{w\in W_g} \det(w) w\left( e^\rho \prod_{k|n_g} \prod_{n=1}^\infty \Big(1-e^{kn\rho}\Big)^{b_k} \right), 
\end{equation}
where $L_g=U\oplus \Lambda^g$, $\rho=(1,0,0)$, $W_g$ is the subgroup of $W$ of elements that map $L_g$ into $L_g$, and the multiplicities $\mathrm{mult}(\alpha)$ are given by
\begin{equation}
 \mathrm{mult}(\alpha) =  \sum_{dk|(\alpha,\, \hat{n}_g)}  \frac{\mu(k)}{dk} \mathrm{Tr}\Big((\hat{g})^d\big| \widetilde{E}_{\alpha/dk}\Big). 
\end{equation}
Here, $a|(\alpha, \hat{n}_g)$ means that $\alpha/a \in L_g'$ and $a|\hat{n}_g$; and if $E$ denotes the subalgebra of $G$ corresponding to the positive roots, then for $\beta=(n,r',m)\in L_g'$ the spaces $\widetilde{E}_{\beta}$ above are
$$
\widetilde{E}_{\beta} = \bigoplus_{\pi (r) = r'} E_{(n,r,m)},
$$
where $\pi: \Lambda\otimes\QQ \to \Lambda^g\otimes\QQ$ is the orthogonal projection.

Borcherds proved that the twisted denominator identity is the untwisted denominator identity of a generalized Kac--Moody superalgebra whose real simple roots are the roots $\alpha$ of $L_g$ satisfying $(\rho, \alpha) = -\alpha^2/2$ and imaginary simple roots are $m\rho$ with multiplicity $\sum_{k|(m,n_g)}b_k$ for all positive integers $m$. In \cite[\S 15, Example 3]{Bor95} Borcherds conjectured that the twisted denominator function is always an automorphic form of singular weight for some orthogonal group. 

Scheithauer formulated Borcherds' conjecture precisely and called it the moonshine conjecture for Conway's group (see \cite[\S 8]{Sch04}, \cite[\S 10]{Sch06}, \cite[\S 5]{Sch11}).  This predicts that the twisted denominator identity corresponding to $g$ defines a Borcherds product of weight $\frac{1}{2}\rank(\Lambda^g)$ on some orthogonal group of signature $(\rank(\Lambda^g)+2,2)$. Scheithauer proved this conjecture for all elements of $\mathrm{Co}_0$ of square-free level in \cite{Sch01, Sch04, Sch08} (including $68$ distinct cycle shapes), for elements of cycle shapes $1^4 2^2 4^4$ and $1^2 2^1 4^1 8^2$ in \cite{Sch09}, for elements of cycle shapes $2^{12}$, $3^8$ and $3^1 21^1$ in \cite{Sch15}, and for elements of cycle shapes $4^8 2^{-4}$, $1^3 3^{-2} 9^3$ and $3^{-1} 9^3$ in \cite{Sch17}. The main idea of the proof of Scheithauer is as follows. Suppose $g\in \mathrm{Co}_0$ has square-free level and $\Lambda^g$ is non-trivial (then $N_g$=$n_g$). Scheithauer proved that the lifting $F_{\Gamma_0(n_g), 1/\eta_g, 0}$ (see Theorem \ref{th:scalar-to-vector}) yields a holomorphic Borcherds product of singular weight on $\Orth^+(U(n_g)\oplus U\oplus \Lambda^g)$ whose Fourier expansion at the $0$-dimensional cusp related to $U(n_g)$ gives the twisted denominator identity corresponding to $g$. Moreover, he proved in \cite{Sch06} that these singular Borcherds products are reflective. The eight non-squarefree cycle shapes above were dealt with by a generalization of the lifting $F_{\Gamma_0(N), f, 0}$.  Scheithauer constructed the singular Borcherds products corresponding to the last five cycle shapes on lattices not of the form $U(n_g)\oplus U\oplus \Lambda^g$.  Following Scheithauer's argument, the conjecture was further proved for elements of cycle shapes $1^{-2} 2^3 4^1 8^2$, $1^{-2} 2^2 3^2 4^1 12^1$ and $1^{-4} 2^6 4^4$ in \cite{DW21}.

\subsection{Moonshine for Conway's group}
We give a uniform proof of the moonshine conjecture for Conway's group. Our proof is different from Scheithauer's approach and is based on the representation of Borcherds products on $U(N)\oplus U\oplus L$ in terms of Jacobi forms in \S \ref{sec:isomorphism}. The input forms into the Borcherds lift turn out to have natural expressions as weakly holomorphic Jacobi forms.

\begin{theorem}\label{th:moonshine}
Let $g$ be an element of $\mathrm{Co}_0$ of level $N_g$. Let $\hat{g}$ be a standard lift of $g$ of order $\hat{n}_g$. For any $d|N_g$ define
\begin{equation}
\phi_{g,d}(\tau,\mathfrak{z}) = \sum_{n\in \ZZ}\sum_{\ell \in (\Lambda^g)'} \mathrm{Tr}\Big( (\hat{g})^d\big| \widetilde{E}_{(n,\ell,1)}\Big) q^n \zeta^\ell, \quad (\tau,\mathfrak{z}) \in \HH \times (\Lambda^g\otimes\CC).   \end{equation}
Then we have the following:
\begin{enumerate}
    \item For every $d|N_g$, $\phi_{g,d}$ is a weakly holomorphic Jacobi form of weight $0$ and index $\Lambda^g$ on $\Gamma_0(N_g/d)$ with trivial character.
    
    \item The Borcherds lift of $(\phi_{g,d})_{d|N_g}$ is a modular form of weight $\frac{1}{2}\rank(\Lambda^g)$ and some character for $\widetilde{\Orth}^+(U(N_g)\oplus U \oplus \Lambda^g)$. The Fourier expansion of this Borcherds product at the $1$-dimensional cusp related to $U(N_g)\oplus U$ is the twisted denominator corresponding to $g$. In particular, when $\Lambda^g$ is non-trivial, $\Borch((\phi_{g,d})_{d|N_g})$ is a holomorphic Borcherds product of singular weight on $U(N_g)\oplus U\oplus \Lambda^g$. 
\end{enumerate}
\end{theorem}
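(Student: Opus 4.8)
The plan is to establish the two assertions separately, using Borcherds' no-ghost theorem to convert the traces defining $\phi_{g,d}$ into an explicit eta--theta quotient, and then to feed the family $(\phi_{g,d})_{d\mid N_g}$ into the product formula of Theorem \ref{th:FJ-level-N}. Realizing $G$ as the BRST cohomology of $V_{\hat\Lambda}\otimes V_{\II_{1,1}}$, the no-ghost theorem identifies, $\hat g$-equivariantly, the root space $E_{(n,r,m)}$ with the subspace of $V_{\hat\Lambda}$ of Leech-momentum $r$ and conformal weight $1+nm$. Because $\hat g$ acts only on the $V_{\hat\Lambda}$-factor, this space depends as an $\hat g$-module only on $r$ and on the product $nm$; hence $E_{(n,r,m)}\cong E_{(nm,r,1)}$ as $(\hat g)^d$-modules for every $m\geq 0$, which yields the crucial identity
\[
\mathrm{Tr}\big((\hat g)^d\mid \widetilde{E}_{(n,\ell,m)}\big)=\phi_{g,d}(nm,\ell).
\]
Summing the momentum-$r$ contributions, using that $(\hat g)^d$ acts on $e^r$ by $\xi_d(r)$ for $r\in\Lambda^{g^d}$ (equation \eqref{eq:lift-action}) and on the oscillators with graded trace $\prod_{j\geq1}\det(1-g^d q^j)^{-1}=q/\eta_{g^d}(\tau)$, I obtain the closed form
\[
\phi_{g,d}(\tau,\mathfrak z)=\frac{1}{\eta_{g^d}(\tau)}\sum_{r\in\Lambda^{g^d}}\xi_d(r)\,q^{r^2/2}\,\zeta^{\pi(r)},
\]
which specializes for $d=1$ to the form $\Theta_g/\eta_g$ of Lemma \ref{lem:Jacobi-untwisted}.

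For part (1) I would read off the Jacobi-form properties from this closed form. Holomorphy on $\HH\times(\Lambda^g\otimes\CC)$ is immediate, and the elliptic transformation has index $\Lambda^g$ since the variable $\mathfrak z$ pairs only with $\pi(r)\in\Lambda^g\otimes\QQ$, whose quadratic form is that of $\Lambda^g$. The weight-$0$ modularity on $\Gamma_0(N_g/d)$ follows by combining the theta transformation of the $\Lambda^{g^d}$-sum with the modularity of $\eta_{g^d}$; the precise level $N_g/d$ must be extracted from the interplay between the level of $\eta_{g^d}$ and that of the projected theta series. The twist $\xi_d$ is controlled by Remark \ref{rk:xi=1} and Lemma \ref{lem:not-nice}: for the finitely many classes without a nice lift, $\xi_d$ only reshapes the sum into a theta series over a suitable sublattice or coset, which remains a Jacobi form of index $\Lambda^g$.

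For part (2) I would set $F=\mathbb{J}^{-1}\big((\phi_{g,d})_{d\mid N_g}\big)$, which by Theorem \ref{th:iso} lies in $M^!_{-\frac12\rank(\Lambda^g)}(\rho_{U(N_g)\oplus\Lambda^g})$ and is $(\ZZ/N_g\ZZ)^\times$-invariant because each $\phi_{g,d}$ is on $\Gamma_0(N_g/d)$; since $\hat g$ acts by signed permutation matrices on integral bases of the root spaces, all coefficients of $F$ are integers, so Theorem \ref{th:FJ-level-N} applies. Comparing the product \eqref{eq:B1} at the $U(N_g)\oplus U$ cusp with the twisted denominator \eqref{eq:twisted-denom}, the two exponents
\[
\sum_{bd\mid(n,\ell,m,N_g)}\frac{\mu(b)}{bd}\,\phi_{g,d}\Big(\tfrac{nm}{b^2d^2},\tfrac{\ell}{bd}\Big)
\quad\text{and}\quad
\sum_{ek\mid((n,\ell,m),\hat n_g)}\frac{\mu(k)}{ek}\,\mathrm{Tr}\big((\hat g)^e\mid\widetilde{E}_{(n,\ell,m)/ek}\big)
\]
match term by term via the identity above, once one checks that the extra divisors with $ek\mid N_g$ but $ek\nmid\hat n_g$ contribute nothing, using $\hat n_g\mid N_g$ and that $\phi_{g,e}$ depends only on $e\bmod\hat n_g$. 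The leading Fourier--Jacobi coefficient $\vartheta_F$ is then the holomorphic theta block $\Theta_g$ of singular weight $\frac12\rank(\Lambda^g)$, and a short computation with the formulas of Theorem \ref{th:FJ-level-N} applied to $\phi_{g,N_g}=\Theta_\Lambda^{\mathrm{proj}}/\Delta$ gives Weyl vector $(A,B,C)=(1,0,0)=\rho$; when $\Lambda^g\neq 0$ the singular weight together with the holomorphic leading coefficient forces $\Borch(F)$ to be holomorphic.

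The main obstacle I expect is part (1): promoting the no-ghost formula to a genuine Jacobi form of the exact level $\Gamma_0(N_g/d)$. Determining this level demands a careful comparison of the level of $\eta_{g^d}$ with that of the projected theta series $\sum_{r\in\Lambda^{g^d}}\xi_d(r)q^{r^2/2}\zeta^{\pi(r)}$, and the $\xi_d$-twisted classes of Lemma \ref{lem:not-nice} must be checked individually to confirm that the twist preserves both the index $\Lambda^g$ and the modularity. The reconciliation of the two summation ranges (the passage between $\hat n_g$ and $N_g$) in part (2) is a secondary delicate point, but it is purely combinatorial once the module identity $\mathrm{Tr}((\hat g)^d\mid\widetilde{E}_{(n,\ell,m)})=\phi_{g,d}(nm,\ell)$ is in hand.
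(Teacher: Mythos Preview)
Your treatment of part (1) is essentially the same as the paper's: derive the closed form $\phi_{g,d}=\Theta_{\xi_d,g^d}\big|_{\Lambda^g}/\eta_{g^d}$ via the no-ghost theorem (this is the paper's Proposition \ref{prop:Jacobi-input}), and then verify the level $\Gamma_0(N_g/d)$ by a case check against Lemma \ref{lem:not-nice}. One small slip: the zeroth Fourier--Jacobi coefficient $\vartheta_F$ is $\eta_g(\tau)$, not $\Theta_g$.

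Part (2), however, has a genuine gap. Your claim that ``all coefficients of $F$ are integers'' does not follow from integrality of the $\phi_{g,d}$: the inverse map $\mathbb{J}^{-1}$ involves the Ramanujan-sum formula of Lemma \ref{lem:rationality-2}, which contains a factor $1/N_g$, so you only get \emph{rational} Fourier coefficients of $F_g$. Hence $\Borch(F_g)$ is a priori only a multi-valued $N$th root of a meromorphic form for some $N$. More seriously, the assertion that ``singular weight together with the holomorphic leading coefficient forces $\Borch(F)$ to be holomorphic'' is not valid: a Borcherds product can have poles along rational quadratic divisors $v^\perp$ that do not pass through the chosen $1$-dimensional cusp, and these are invisible in the leading Fourier--Jacobi coefficient. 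Nothing in your argument rules out a negative (or non-integral) principal-part coefficient of $F_g$ producing such a divisor.

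The paper closes both gaps at once with a Laplacian argument you are missing. The \emph{additive} side of the twisted denominator identity \eqref{eq:twisted-denom} shows that $\Borch(F_g)$ is annihilated by the Laplace operator $\mathbf{\Delta}$ on the tube domain (its Fourier support lies on isotropic vectors). Locally near any divisor $v^\perp$ with multiplicity $d_v/N$, after an orthogonal change of variables one writes $\Borch(F_g)\big|\sigma = f(z_1,\dots,z_{l-1})\,z_l^{d_v/N}+O(z_l^{d_v/N+1})$; applying $\mathbf{\Delta}$ produces a leading term with coefficient $\tfrac{d_v}{2N}\big(\tfrac{d_v}{N}-1\big)f$, which must vanish. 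This forces every multiplicity to equal exactly $1$, simultaneously establishing single-valuedness (integrality of the principal part) and holomorphicity (positivity of all divisor multiplicities). Without this step, or some substitute that actually controls the full principal part of $F_g$, your argument for (2) is incomplete.
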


To prove part (1) we relate $\phi_{g, d}$ to the theta function of the fixed-point lattice of $g^d$. First we compute the value of $\mathrm{Tr}((\hat{g})^d\big|\widetilde{E}_\alpha)$ for $\alpha \in L_g'$. 

We recall some notation from \cite[\S 8]{Sch04}. For any positive integer $d$, $\Lambda^g$ is a primitive sublattice of $\Lambda^{g^d}$. We define $$\Lambda^{g,d}=(\Lambda^g)^\perp \cap \Lambda^{g^d}.$$ There exist a subgroup $G_{d,\Lambda^g}$ of $(\Lambda^g)'/\Lambda^g$, a subgroup $G_{d, \Lambda^{g,d}}$ of $(\Lambda^{g,d})'/\Lambda^{g,d}$ and an isomorphism $\gamma_d : G_{d, \Lambda^{g}} \to G_{d, \Lambda^{g,d}}$ such that every element of $\Lambda^{g^d} / (\Lambda^g\oplus \Lambda^{g,d})$ can be expressed as $r' + \gamma_d(r')$ for some class $r' \in G_{d, \Lambda^{g}}$. For $r'\in (\Lambda^g)'$, we define a theta function $\theta_{\xi_d, \gamma_d(r')}$ by 
\begin{equation}
\theta_{\xi_d, \gamma_d(r')}(\tau) = \sum_{v\in \gamma_d(r')+\Lambda^{g,d}} \xi_d(r'+v) q^{v^2/2}
\end{equation}
when $r' \in G_{d,\Lambda^g}$, where $\xi_d$ is defined in \eqref{eq:xi}, and $\theta_{\xi_d, \gamma_d(r')} = 0$ if $r' \not\in G_{d, \Lambda^g}.$ Note that $\theta_{\xi_d, \gamma_d(r')}$ depends only on the class of $r'$ modulo $\Lambda^g$.

The following lemma is an extension of \cite[Proposition 8.3]{Sch04}.
\begin{lemma}\label{lem:value-E}
For any positive integer $d$ and $\alpha=(n,r',m)\in L_g'$, the value of $\mathrm{Tr}\big((\hat{g})^d\big|\widetilde{E}_\alpha\big)$ is the coefficient of $q^{-\alpha^2/2}$ in 
$$
\theta_{\xi_d, \gamma_d(r')}(\tau) / \eta_{g^d}(\tau).
$$
In particular,  we have the following:
\begin{enumerate}
    \item $\mathrm{Tr}\big((\hat{g})^d\big|\widetilde{E}_\alpha\big)=0$ if $dr' \not\in \Lambda^g$. 
    \item For given $g$ and $d$, the value of $\mathrm{Tr}\big((\hat{g})^d\big|\widetilde{E}_\alpha\big)$ depends only on the norm of $\alpha$ and the coset of $\alpha$ modulo $L_g$. 
\end{enumerate}
\end{lemma}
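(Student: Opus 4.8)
The plan is to follow the strategy of \cite[Proposition 8.3]{Sch04}, the genuinely new feature being the sign function $\xi_d$ that appears because $(\hat{g})^d$ need not be a standard lift of $g^d$. Recall from \cite{Bor90} that $G$ is realized as a cohomology space of the vertex algebra of $\hat{\Lambda}$, and that the no-ghost theorem identifies, for every nonzero root $\alpha_r=(n,r,m)\in\II_{25,1}$, the root space $E_{(n,r,m)}$ with the weight $1-\alpha_r^2/2$ subspace of the Fock space of the Leech lattice supported on the momentum vector $e^{r}$, where $\alpha_r^2=r^2-2nm$; moreover this identification is equivariant for the lifted automorphisms. First I would use this to reduce $\mathrm{Tr}\big((\hat{g})^d\big|\widetilde{E}_{\alpha}\big)$, for $\alpha=(n,r',m)$, to a sum over those $r\in\Lambda$ with $\pi(r)=r'$: since $(\hat{g})^d$ permutes the summands $E_{(n,r,m)}$ according to $r\mapsto g^d(r)$, the off-diagonal blocks are traceless and only the vectors $r$ fixed by $g^d$, i.e. $r\in\Lambda^{g^d}$, contribute.

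For such an $r\in\Lambda^{g^d}$ the operator $(\hat{g})^d$ acts on the momentum vector $e^{r}$ by the scalar $\xi_d(r)$ according to \eqref{eq:lift-action}, and on the $24$ transverse oscillators by $g^d$. Hence
\[
\mathrm{Tr}\big((\hat{g})^d\big|E_{(n,r,m)}\big)=\xi_d(r)\cdot\Big[q^{\,1-\alpha_r^2/2}\Big]\prod_{j\geq1}\det\big(1-g^dq^{j}\,\big|\,\Lambda\otimes\CC\big)^{-1}.
\]
Expanding the characteristic polynomial of $g^d$ via its cycle shape and comparing with the definition of $\eta_{g^d}$, together with $\sum_k kb_k=24$, gives $\prod_{j\geq1}\det(1-g^dq^{j})^{-1}=q\,\eta_{g^d}(\tau)^{-1}$, so the right-hand side equals $\xi_d(r)\,[q^{-\alpha_r^2/2}]\,\eta_{g^d}(\tau)^{-1}$. (For $d$ with $g^d=1$ this collapses to $\dim E_{(n,r,m)}=[q^{-\alpha_r^2/2}]\Delta^{-1}$, a useful consistency check.) Only finitely many $r$ contribute, since $\eta_{g^d}^{-1}$ has a first-order pole and $\alpha_r^2\to\infty$.

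Next I would sum over $r$. Writing $r=r'+v$ with $v\in\Lambda^{g,d}\otimes\QQ$ orthogonal to $\Lambda^g\otimes\QQ$, we have $\alpha_r^2=\alpha^2+v^2$, so $[q^{-\alpha_r^2/2}]\eta_{g^d}^{-1}=[q^{-\alpha^2/2}]\big(q^{v^2/2}\eta_{g^d}^{-1}\big)$. By the glue description recorded before the lemma, the vectors $r\in\Lambda^{g^d}$ with $\pi(r)=r'$ are exactly those whose $v$-part runs through the coset $\gamma_d(r')+\Lambda^{g,d}$ when $r'+\Lambda^g\in G_{d,\Lambda^g}$, and there are none otherwise. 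Collecting the signs, $\sum_{v}\xi_d(r'+v)q^{v^2/2}=\theta_{\xi_d,\gamma_d(r')}(\tau)$, and therefore
\[
\mathrm{Tr}\big((\hat{g})^d\big|\widetilde{E}_{\alpha}\big)=\Big[q^{-\alpha^2/2}\Big]\Big(\theta_{\xi_d,\gamma_d(r')}(\tau)/\eta_{g^d}(\tau)\Big),
\]
which is the asserted formula.

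Finally, claims (1) and (2) follow quickly. For (1), if some $r\in\Lambda^{g^d}$ satisfies $\pi(r)=r'$, then $s:=\sum_{j=0}^{d-1}g^j(r)\in\Lambda^g$, and writing $r=r'+v$ as above one has $s=dr'+\sum_{j}g^j(v)$, where $\sum_j g^j(v)$ is $g$-fixed and lies in $(\Lambda^g)^\perp\otimes\QQ$, hence is $0$; thus $s=dr'\in\Lambda^g$, so $dr'\notin\Lambda^g$ forces no such $r$ to exist, whence $r'\notin G_{d,\Lambda^g}$, $\theta_{\xi_d,\gamma_d(r')}=0$ and the trace vanishes. For (2), the coefficient $[q^{-\alpha^2/2}]\big(\theta_{\xi_d,\gamma_d(r')}/\eta_{g^d}\big)$ depends only on $\alpha^2$ and, through $\theta_{\xi_d,\gamma_d(r')}$, only on the class of $r'$ modulo $\Lambda^g$; since the hyperbolic coordinates $(n,m)$ are always integral, this class coincides with the class of $\alpha$ modulo $L_g=U\oplus\Lambda^g$. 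The main obstacle is the correct bookkeeping of the signs $\xi_d(r)$: for the elements without a nice lift (the $20$ cycle shapes of Lemma \ref{lem:not-nice}) one genuinely has $\xi_d\not\equiv1$, and it is precisely the identity \eqref{eq:lift-action} together with the orthogonal splitting $r=r'+v$ that lets these signs be absorbed into the twisted theta function $\theta_{\xi_d,\gamma_d(r')}$ defined in \eqref{eq:xi}, rather than obstructing its modularity.
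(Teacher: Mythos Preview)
Your argument is correct and follows essentially the same route as the paper's proof: both extend \cite[Proposition~8.3]{Sch04} by invoking the no-ghost theorem to reduce to the Leech Fock space at momenta $r'+s$ with $s\in\gamma_d(r')+\Lambda^{g,d}$, and then track the extra sign $\xi_d$ coming from \eqref{eq:lift-action} into the twisted theta function. The only differences are cosmetic: you spell out the oscillator trace $\prod_j\det(1-g^dq^j)^{-1}=q\,\eta_{g^d}^{-1}$ explicitly, and for claim~(1) you give the direct averaging argument $dr'=\sum_{j=0}^{d-1}g^j(r)\in\Lambda^g$ instead of citing \cite[Proposition~8.2]{Sch04}, which is the same computation.
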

\begin{proof}
Scheithauer \cite[Proposition 8.3]{Sch04} proved this lemma for elements $g \in \mathrm{Co}_0$ which have a nice lift (in which case $\xi_d=1$ always holds), but his proof can be extended to the general case. The no-ghost theorem implies that for arbitrary $g\in \mathrm{Co}_0$ the identity
\begin{equation*}
\mathrm{Tr}\Big((\hat{g})^d\big|\widetilde{E}_\alpha\Big) = \sum_{s\in S} \mathrm{Tr}\Big((\hat{g})^d\big| V_{1+nm}(r'+s) \Big)
\end{equation*}
holds, where $V_{1+nm}$ is the $L_0$-eigenspace of degree $1+nm$ of the vertex operator algebra of the Leech lattice and $S=\gamma_d(r') + \Lambda^{g,d}$ if $r'\in G_{d,\Lambda^g}$ and $S$ is empty otherwise. The space $V_{1+nm}(r'+s)$ is generated by bosonic oscillators and $e^{r'+s}$. By definition, $(\hat{g})^d(e^{r'+s})=\xi_d(r'+s)e^{r'+s}$, so the trace $\mathrm{Tr}\big((\hat{g})^d\big|\widetilde{E}_\alpha\big)$ can be computed as in Scheithauer's proof. We know from \cite[Proposition 8.2]{Sch04} that if $r'\in G_{d, \Lambda^g}$ then $dr' \in \Lambda^g$, proving claim (1); and claim (2) is clear.  
\end{proof}

\begin{remark}\label{rk:match}
The above lemma shows that for any positive integer $d$,
$$
\mathrm{Tr}\big((\hat{g})^d\big|\widetilde{E}_\alpha\big) = \mathrm{Tr}\big((\hat{g})^{(d,\hat{n}_g)}\big|\widetilde{E}_\alpha\big).
$$
Let $d_1=(d, \hat{n}_g)$ and $d_0=d/d_1$. We find that $g^{d_1}$ and $g^{d}$ have the same cycle shape and $\Lambda^{g^{d_1}}=\Lambda^{g^d}$. To show that $\xi_d=\xi_{d_1}$ on $\Lambda^{g^{d_1}}=\Lambda^{g^d}$, we only need to consider the case that both $n_g$ and $d$ are even. When $\hat{n}_g=2n_g$, since $d_0$ is coprime with the order of $g^{d_1/2}$, we see that $g^{d_1/2}$ and $g^{d/2}$ have the same cycle shape and that $\Lambda^{g^{d_1/2}}=\Lambda^{g^d/2}$, which yields $\xi_d=\xi_{d_1}$ by Remark \ref{rk:xi=1}. When $\hat{n}_g=n_g$ and $d_0=2a+1$ is odd, for any $v\in \Lambda^{g^{d_1}}$ we have 
$$
(v, g^{d/2}(v)) = (v, g^{d_1/2}g^{ad_1}(v)) = (v, g^{d_1/2}(v)),
$$
which yields $\xi_d=\xi_{d_1}$. Finally, if $d_0$ is even then $\xi_{d}=1$. We can write $n_g=d_1(2a+1)$ with $a \in\mathbb{N}_0$ because $(n_g/d_1, d_0)=1$. For any $v\in \Lambda^{g^{d_1}}$ we have 
$$
2\ZZ \ni (v, g^{n_g/2}(v)) = (v, g^{d_1/2}g^{ad_1}(v)) = (v, g^{d_1/2}(v)).
$$
Thus $\xi_{d_1}=1$ in this case as well. This discussion also shows that if $n_g=\hat{n}_g$ and $n_g/d$ is odd then $\xi_d=1$ on $\Lambda^{g^d}$. 
\end{remark}

\begin{proposition}\label{prop:Jacobi-input}
For any $d|N_g$, the function $\phi_{g,d}$ defined in Theorem \ref{th:moonshine} has the expression
$$
\phi_{g,d}(\tau,\mathfrak{z}) = \frac{\Theta_{\xi_d,g^d}\Big|_{\Lambda^g}(\tau,\mathfrak{z})}{\eta_{g^d}(\tau)},
$$
where $\Theta_{\xi_d,g^d}\Big|_{\Lambda^g}$ is the pullback of $\Theta_{\xi_d, g^d}$ along the natural embedding $\Lambda^g < \Lambda^{g^d}$ and 
$$
\Theta_{\xi_d,g^d}(\tau,z) = \sum_{v \in \Lambda^{g^d}} \xi_d(v) e^{\pi i(v,v)\tau + 2\pi i(v, z)}, \quad z\in \Lambda^{g^d}\otimes\CC.
$$
\end{proposition}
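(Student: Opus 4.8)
The plan is to prove the stated identity by a direct comparison of Fourier coefficients, feeding the trace formula of Lemma \ref{lem:value-E} into the left-hand side and decomposing the theta function on the right according to the glue between $\Lambda^g$ and $\Lambda^{g,d}$ inside $\Lambda^{g^d}$. Since both sides are formal series in $q$ and $\zeta^{r'}$, it suffices to match the coefficient of $q^n\zeta^{r'}$ for each $n\in\ZZ$ and $r'\in(\Lambda^g)'$.

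First I would fix $r'\in(\Lambda^g)'$ and extract the coefficient of $\zeta^{r'}$ from the pullback $\Theta_{\xi_d,g^d}|_{\Lambda^g}(\tau,\mathfrak{z})$. Since $\mathfrak{z}\in\Lambda^g\otimes\CC$, every $v\in\Lambda^{g^d}$ satisfies $(v,\mathfrak{z})=(\pi(v),\mathfrak{z})$, where $\pi$ is the orthogonal projection onto $\Lambda^g\otimes\QQ$; as $\Lambda^{g^d}$ is an even integral lattice one has $\pi(v)\in(\Lambda^g)'$. Hence the $\zeta^{r'}$-term of $\Theta_{\xi_d,g^d}|_{\Lambda^g}$ gathers exactly the $v\in\Lambda^{g^d}$ with $\pi(v)=r'$. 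By the glue description recalled from \cite[\S 8]{Sch04}, such $v$ exist precisely when $r'+\Lambda^g\in G_{d,\Lambda^g}$, in which case they are the vectors $v=r'+w$ with $w\in\gamma_d(r')+\Lambda^{g,d}$. Orthogonality of $\Lambda^g$ and $\Lambda^{g,d}$ gives $(v,v)=(r',r')+(w,w)$, and $\xi_d(v)=\xi_d(r'+w)$ is literally the weight appearing in the definition of $\theta_{\xi_d,\gamma_d(r')}$, so the coefficient of $\zeta^{r'}$ equals $q^{(r',r')/2}\,\theta_{\xi_d,\gamma_d(r')}(\tau)$ (and vanishes otherwise).

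It then follows that the coefficient of $q^n\zeta^{r'}$ in $\Theta_{\xi_d,g^d}|_{\Lambda^g}/\eta_{g^d}$ is the coefficient of $q^{\,n-(r',r')/2}$ in $\theta_{\xi_d,\gamma_d(r')}/\eta_{g^d}$. Taking $\alpha=(n,r',1)\in L_g'$, so that $-\alpha^2/2=n-(r',r')/2$, Lemma \ref{lem:value-E} identifies this number with $\mathrm{Tr}\big((\hat{g})^d|\widetilde{E}_\alpha\big)$, which is by definition the coefficient of $q^n\zeta^{r'}$ in $\phi_{g,d}$. Comparing coefficients for all pairs $(n,r')$ yields the identity. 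The only step demanding care is the bookkeeping of the glue decomposition together with the norm splitting above; once Lemma \ref{lem:value-E} is in hand there is no further input needed, and no representative ambiguity arises, since $r'$ enters both sides as the genuine orthogonal projection $\pi(v)$ rather than as a coset representative.
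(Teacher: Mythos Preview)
Your proof is correct and follows essentially the same approach as the paper: both decompose $v\in\Lambda^{g^d}$ according to the glue $\Lambda^{g^d}/(\Lambda^g\oplus\Lambda^{g,d})$, split the norm as $(r',r')+(w,w)$, recognize the inner sum as $\theta_{\xi_d,\gamma_d(r')}$, and then invoke Lemma~\ref{lem:value-E} with $\alpha=(n,r',1)$ so that $-\alpha^2/2=n-(r',r')/2$. Your framing via the orthogonal projection $\pi$ (noting $\pi(v)\in(\Lambda^g)'$ because $\Lambda^{g^d}$ is integral) is slightly more explicit than the paper's direct double sum over $r'\in G_{d,\Lambda^g}+\Lambda^g$ and $s\in\gamma_d(r')+\Lambda^{g,d}$, but the content is identical.
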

\begin{proof}
We calculate
\begin{align*}
\Theta_{\xi_d,g^d}\Big|_{\Lambda^g}(\tau,\mathfrak{z}) &= \sum_{v \in \Lambda^{g^d}} \xi_d(v) e^{\pi i(v,v)\tau + 2\pi i(v, \mathfrak{z})} \\
&= \sum_{r'\in G_{d,\Lambda^g} + \Lambda^g} \sum_{s\in \gamma_d(r') + \Lambda^{g,d}} \xi_d(r'+s) q^{r'^2/2+s^2/2} \zeta^{r'}  \\
&= \sum_{r'\in G_{d,\Lambda^g} + \Lambda^g} q^{r'^2/2}\zeta^{r'} \sum_{s\in \gamma_d(r') + \Lambda^{g,d}} \xi_d(r'+s) q^{s^2/2}. 
\end{align*}
By Lemma \ref{lem:value-E} we have
\begin{align*}
\frac{\Theta_{\xi_d,g^d}\Big|_{\Lambda^g}(\tau,\mathfrak{z})}{\eta_{g^d}(\tau)} & = \sum_{r'\in G_{d,\Lambda^g} + \Lambda^g} q^{r'^2/2}\zeta^{r'} \cdot \frac{\theta_{\xi_d,\gamma_d(r')}(\tau)}{\eta_{g^d}(\tau)} \\
& = \sum_{r'\in G_{d,\Lambda^g} + \Lambda^g} q^{r'^2/2}\zeta^{r'} \sum_{n \in \ZZ } \mathrm{Tr}\Big((\hat{g})^d\big|\widetilde{E}_{(n,r',1)}\Big) q^{n-r'^2/2} \\ 
&= \sum_{n\in \ZZ} \sum_{r' \in (\Lambda^g)'} \mathrm{Tr}\Big((\hat{g})^d\big|\widetilde{E}_{(n,r',1)}\Big) q^n \zeta^{r'}. \qedhere
\end{align*}
\end{proof}
Note that the zero-value $\phi_{g,d}(\tau,0)$ is the trace of $(\hat{g})^d$ acting on the Leech lattice vertex operator algebra (see \cite[Proposition 7.5]{EMS20}), i.e.
$$
\phi_{g,d}(\tau,0) = \mathrm{tr}_V (\hat{g})^d q^{L_0-1} =\eta_{g^d}(\tau)^{-1}\sum_{v\in \Lambda^{g^d}} \xi_d(v) q^{v^2/2}.
$$

Using Proposition \ref{prop:Jacobi-input} we can read off the behavior of $\phi_{g,d}$ under $\mathrm{SL}_2(\mathbb{Z})$. Suppose that both $n_g$ and $d$ are even, $\Lambda^{g^d}$ is non-trivial and $\xi_d\neq 1$ on $\Lambda^{g^d}$.  We define 
\begin{align}
    \Lambda^{g^d,+} &= \{ v \in \Lambda^{g^d} : \xi_d(v) =1 \},\\
    \Lambda^{g^d,-} &= \{ v \in \Lambda^{g^d} : \xi_d(v) =-1 \}.
\end{align}
Then $\Lambda^{g^d,+}$ is a sublattice of index $2$ of $\Lambda^{g^d}$, and $\Lambda^{g^d,-}$ is its non-trivial coset: $\Lambda^{g^d,-}=\beta + \Lambda^{g^d,+}$ for any $\beta \in \Lambda^{g^d}$ satisfying that $(\beta, g^{d/2}(\beta))$ is odd. Clearly, $2\beta \in \Lambda^{g^d,+}$. We have the decomposition (with notation as in Example \ref{ex:Theta})
\begin{equation}
\Theta_{\xi_d,g^d}(\tau,z) = \Theta_{\Lambda^{g^d,+},0}(\tau,z) - \Theta_{\Lambda^{g^d,+},\beta}(\tau,z).     
\end{equation}
Let $l_{g^d}^+$ be the level of $\Lambda^{{g^d},+}$. Then $l_{g^d}^+$ is even and $l_{g^d} | l_{g^d}^+$ (where $l_{g^d}$ is the level of $\Lambda^{g^d}$). We see from Example \ref{ex:Theta} and \cite[Proposition 4.5]{Sch09} that $\Theta_{\Lambda^{g^d,+},0}$ and $\Theta_{\Lambda^{g^d,+},\beta}$ are holomorphic Jacobi forms of weight $\frac{1}{2}\rank(\Lambda^{g^d})$ and index $\Lambda^{g^d,+}$ with the same character on $\Gamma_0(l_{g^d}^+)$. To summarize:

\begin{lemma}\label{lem:modularity}
Let $d|\hat{n}_g$. If $\xi_d=1$ on $\Lambda^{g^d}$, then $\phi_{g,d}$ is a weakly holomorphic Jacobi form of weight $0$ and index $\Lambda^{g}$ with trivial character on $\Gamma_0(N_{g^d})$, where $N_{g^d}$ is the level of $g^d$. If $\xi_d\neq 1$ on $\Lambda^{g^d}$, then $\phi_{g,d}$ is a weakly holomorphic Jacobi form of weight $0$ and index $\Lambda^{g}$ with trivial character on $\Gamma_0(N_{g^d}')$, where $N_{g^d}'$ is the least common multiple of $N_{g^d}$ and $l_{g^d}^+$.
\end{lemma}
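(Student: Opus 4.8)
The plan is to deduce the statement entirely from the explicit formula $\phi_{g,d}=\Theta_{\xi_d,g^d}\big|_{\Lambda^g}/\eta_{g^d}$ of Proposition \ref{prop:Jacobi-input}, handling the numerator and denominator separately. First I would record the facts common to both cases. Since $\eta$ is nowhere vanishing on $\HH$, the eta quotient $\eta_{g^d}$ has no zeros on $\HH$, so $\phi_{g,d}$ is holomorphic on $\HH\times(\Lambda^g\otimes\CC)$ and, being a ratio of two forms that are meromorphic at the cusps, is weakly holomorphic. For the index, note that $\Lambda^g$ is a primitive sublattice of $\Lambda^{g^d}$ and $\xi_d\equiv 1$ on $\Lambda^g$ (for $v\in\Lambda^g$ one has $g^{d/2}(v)=v$, hence $\xi_d(v)=(-1)^{(v,v)}=1$ since $\Lambda$ is even); therefore restricting the numerator to $\mathfrak{z}\in\Lambda^g\otimes\CC$ gives a Jacobi form of index $\Lambda^g$, and dividing by the $\mathfrak{z}$-independent $\eta_{g^d}$ preserves the index. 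The numerator has weight $\tfrac12\rank(\Lambda^{g^d})$, while $\eta_{g^d}=\prod_k\eta(k\tau)^{b_k}$ has weight $\tfrac12\sum_k b_k=\tfrac12\rank(\Lambda^{g^d})$, so $\phi_{g,d}$ has weight $0$. What remains is the level and the triviality of the character.

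When $\xi_d\equiv 1$ on $\Lambda^{g^d}$ the numerator is the full lattice theta function $\Theta_{g^d}$, and the claim is immediate from Lemma \ref{lem:Jacobi-untwisted} applied to the element $g^d$: that lemma yields that $\Theta_{g^d}/\eta_{g^d}$ is a weakly holomorphic Jacobi form of weight $0$ and index $\Lambda^{g^d}$ with trivial character on $\Gamma_0(N_{g^d})$, and restricting $\mathfrak{z}$ to $\Lambda^g\otimes\CC$ produces the asserted form of index $\Lambda^g$ and trivial character on the same group.

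When $\xi_d\neq 1$ I would use the decomposition $\Theta_{\xi_d,g^d}=\Theta_{\Lambda^{g^d,+},0}-\Theta_{\Lambda^{g^d,+},\beta}$ recorded above. By Example \ref{ex:Theta} and \cite[Proposition 4.5]{Sch09} both summands are holomorphic Jacobi forms of weight $\tfrac12\rank(\Lambda^{g^d})$ and index $\Lambda^{g^d,+}$ sharing a single character $\psi$ on $\Gamma_0(l_{g^d}^+)$. Since $\Lambda^g\subseteq\Lambda^{g^d,+}$, restricting to $\Lambda^g$ and dividing by $\eta_{g^d}$ gives a weakly holomorphic Jacobi form of weight $0$ and index $\Lambda^g$ on $\Gamma_0(N_{g^d}')$ with $N_{g^d}'=\mathrm{lcm}(N_{g^d},l_{g^d}^+)$, carrying the character $\psi\cdot\chi_{g^d}^{-1}$, where $\chi_{g^d}$ is the character of $\eta_{g^d}$.

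The one point that requires genuine work — the main obstacle — is that this leftover character $\psi\cdot\chi_{g^d}^{-1}$ is trivial. I would prove this by specializing to $\mathfrak{z}=0$: the character of a Jacobi form is the $\mathfrak{z}$-independent scalar in its $\Gamma_0$-transformation, because the automorphy factor $(c\tau+d)^{-k}\exp(-i\pi c(\mathfrak{z},\mathfrak{z})/(c\tau+d))$ carries all of the $\mathfrak{z}$-dependence and reduces to $1$ at $\mathfrak{z}=0$, so the character is detected on $\phi_{g,d}(\tau,0)$, which is not identically zero (its $q$-expansion begins with $q^{-1}$). By \cite[Proposition 7.5]{EMS20} one has $\phi_{g,d}(\tau,0)=\mathrm{tr}_V(\hat{g})^d q^{L_0-1}$, the graded trace of $(\hat g)^d$ on the Leech vertex operator algebra, which is a genuine modular function on $\Gamma_0(N_{g^d}')$ with trivial character; hence $\psi\cdot\chi_{g^d}^{-1}=1$. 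As an alternative one can match $\psi$ and $\chi_{g^d}$ directly — both are real quadratic Dirichlet characters, $\chi_{g^d}$ coming from the eta multiplier and $\psi$ read off from \cite[Proposition 4.5]{Sch09} in terms of the discriminant of $\Lambda^{g^d,+}$ — and verify agreement on $\Gamma_0(N_{g^d}')$ by a discriminant–conductor comparison. I expect the specialization at $\mathfrak{z}=0$ to be the cleaner route, the only subtlety being the observation that the scalar character survives that specialization.
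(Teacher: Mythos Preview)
Your structure is exactly the paper's: apply Proposition~\ref{prop:Jacobi-input} for the formula, invoke Lemma~\ref{lem:Jacobi-untwisted} (for $g^d$) when $\xi_d=1$, and when $\xi_d\neq 1$ use the decomposition $\Theta_{\xi_d,g^d}=\Theta_{\Lambda^{g^d,+},0}-\Theta_{\Lambda^{g^d,+},\beta}$ with both summands sharing a character $\psi$ on $\Gamma_0(l_{g^d}^+)$. Your weight, index and holomorphy checks are correct.

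The one place you diverge is in showing $\psi\cdot\chi_{g^d}^{-1}$ is trivial, and here your primary route has a gap: \cite[Proposition~7.5]{EMS20}, as used in this paper, supplies only the \emph{formula} $\phi_{g,d}(\tau,0)=\mathrm{tr}_V(\hat g)^dq^{L_0-1}$, not that this trace is modular on $\Gamma_0(N_{g^d}')$ with trivial character; that would require its own VOA-theoretic justification of the precise level and character. The argument implicit in the paper's ``discussions above'' is simpler and avoids this entirely. From $\Lambda^{g^d}=\Lambda^{g^d,+}\sqcup(\beta+\Lambda^{g^d,+})$ one has $\Theta_{g^d}=\Theta_{\Lambda^{g^d,+},0}+\Theta_{\Lambda^{g^d,+},\beta}$, so the nonzero sum $\Theta_{g^d}$ also carries $\psi$ on $\Gamma_0(l_{g^d}^+)$; but the paragraph preceding Lemma~\ref{lem:Jacobi-untwisted} already records that $\Theta_{g^d}$ has the same character $\chi_{g^d}$ as $\eta_{g^d}$ on $\Gamma_0(l_{g^d})\supseteq\Gamma_0(l_{g^d}^+)$. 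Hence $\psi=\chi_{g^d}$ on $\Gamma_0(l_{g^d}^+)$ and $\phi_{g,d}$ has trivial character on $\Gamma_0(N_{g^d}')$, with no external VOA input and no discriminant--conductor casework.
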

\begin{proof}
The proof follows from Proposition \ref{prop:Jacobi-input}, Lemma \ref{lem:Jacobi-untwisted} and the discussions above. 
\end{proof}

\begin{lemma}
Let $g$ be an element of $\mathrm{Co}_0$ without a nice lift and let $\hat{n}_g$ be the order of its standard lift. For each $d|n_g$ for which $\xi_d\neq 1$ on $\Lambda^{g^{d}}$, the level of $\Lambda^{g^d, +}$ is $$l_{g^d}^+ = \frac{\hat{n}_g}{d}.$$
\end{lemma}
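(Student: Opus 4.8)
The plan is to reduce the computation of $l_{g^d}^+$ to $2$-adic data about the fixed-point lattices and then to identify the target value $\hat n_g/d$ with the order of the lifted automorphism $(\hat g)^d$. Throughout one uses that $g$ having no nice lift forces $n_g$ even and $\Lambda^g$ (hence $\Lambda^{g^d}$) non-trivial, while $\xi_d\neq 1$ on $\Lambda^{g^d}$ forces $d$ even as well. First I would record that $\xi_d\colon v\mapsto (-1)^{(v,g^{d/2}(v))}$ is genuinely a homomorphism $\Lambda^{g^d}\to\{\pm1\}$: since $g^{d/2}$ restricts to an isometric involution of $\Lambda^{g^d}$ one has $(u,g^{d/2}(v))=(v,g^{d/2}(u))$, so $(u+v,g^{d/2}(u+v))\equiv (u,g^{d/2}(u))+(v,g^{d/2}(v))\pmod 2$. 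Hence $\Lambda^{g^d,+}=\ker\xi_d$ is a sublattice of index $2$ with non-trivial coset $\beta+\Lambda^{g^d,+}$ and $2\beta\in\Lambda^{g^d,+}$, exactly as in the paragraph preceding the lemma.

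Next I would describe the discriminant form of $\Lambda^{g^d,+}$ via the eigenlattice decomposition of $g^{d/2}$. Writing $v=v_1+v_2$ with $v_1$ in the $(+1)$-eigenspace $\Lambda^{g^{d/2}}\otimes\QQ$ and $v_2$ in the $(-1)$-eigenspace, Remark \ref{rk:xi=1} gives $\xi_d(v)=1$ if and only if $(v_1,v_1)\in\ZZ$; since $2(v_1,v_1)\in\ZZ$ this is the statement $\xi_d(v)=e^{2\pi i(v_1,v_1)}$, so $\Lambda^{g^d,+}$ is the preimage of $\{(v_1,v_1)\in\ZZ\}$ under the projection $v\mapsto v_1\in(\Lambda^{g^{d/2}})'$. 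From this one reads off $\beta$ as (half of) a characteristic vector of $\Lambda^{g^d}$ and realizes $D_{\Lambda^{g^d,+}}$ as the discriminant form of $\Lambda^{g^d}$ glued by the order-$\leq 2$ element $\beta$ of half-integral norm; the level $l_{g^d}^+$ is then the smallest $N$ with $N\,Q\equiv 0$ on $D_{\Lambda^{g^d,+}}$, and adjoining $\beta$ affects only the $2$-part.

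I would then identify the claimed value intrinsically. Since $d\mid n_g\mid \hat n_g$, the order of $(\hat g)^d$ in $\mathrm{Aut}(\hat\Lambda)$ is $\hat n_g/\gcd(\hat n_g,d)=\hat n_g/d$; moreover $(\hat g)^{2d}(e^v)=(-1)^{(v,v)}e^v=e^v$ for $v\in\Lambda^{g^d}$, so $(\hat g)^{2d}$ is again a standard lift and $\ord((\hat g)^d)$ is \emph{even} whenever $\xi_d\neq 1$. Thus the lemma asserts $l_{g^d}^+=\ord((\hat g)^d)$. To connect this with modularity I would use the twisted graded character $T_d(\tau)=\mathrm{tr}_V (\hat g)^d q^{L_0-1}=\Theta_{\xi_d,g^d}(\tau,0)/\eta_{g^d}(\tau)$ from Proposition \ref{prop:Jacobi-input} together with the splitting $\Theta_{\xi_d,g^d}(\tau,0)=\Theta_{\Lambda^{g^d,+},0}-\Theta_{\Lambda^{g^d,+},\beta}$: by \cite[Proposition 4.5]{Sch09} these theta components have level exactly $l_{g^d}^+$, so $l_{g^d}^+$ is the level of $T_d\cdot\eta_{g^d}$, which on the vertex-algebra side is governed by $\ord((\hat g)^d)$.

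The main obstacle is this last identification: extracting the \emph{exact} level, rather than merely a divisor or multiple of $\hat n_g/d$, requires the precise $2$-adic structure of $D_{\Lambda^{g^d}}$ and of the glue vector $\beta$, which is not uniform across conjugacy classes. I therefore expect the cleanest rigorous route to combine the structural reduction above with a finite verification over the $20$ cycle shapes of Lemma \ref{lem:not-nice} (and the finitely many accompanying $d$): for each of these the fixed-point lattices $\Lambda^{g^d}$ and $\Lambda^{g^{d/2}}$ are explicitly known (e.g.\ from \cite{HL90}), so $\Lambda^{g^d,+}$ and its level may be computed directly and checked against $\hat n_g/d$. The conceptual identity $l_{g^d}^+=\ord((\hat g)^d)$ then serves simultaneously as the guiding principle and as a consistency check for these case-by-case computations.
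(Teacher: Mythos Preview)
Your proposal ultimately lands on exactly the approach the paper takes: the paper's proof is the single sentence ``This can be checked by cases using the list of cycle shapes and divisors $d$ in Lemma \ref{lem:not-nice},'' and you explicitly conclude that a finite verification over those $20$ cycle shapes is the cleanest rigorous route. So as a proof strategy your proposal is correct and agrees with the paper.

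The difference is that you build a conceptual scaffold around the case check --- the identification $l_{g^d}^+=\mathrm{ord}((\hat g)^d)$ via the twisted character $T_d$ --- whereas the paper makes no attempt at a uniform argument. Your scaffold is a nice heuristic but, as you yourself acknowledge, it does not yield a proof: the step ``$l_{g^d}^+$ is the level of $T_d\cdot\eta_{g^d}$, which on the vertex-algebra side is governed by $\mathrm{ord}((\hat g)^d)$'' is not a precise statement (the difference $\Theta_{\Lambda^{g^d,+},0}-\Theta_{\Lambda^{g^d,+},\beta}$ could in principle have strictly smaller level than its summands, and ``governed by'' is not an equality). One further small slip in the heuristic part: your claim that $(\hat g)^{2d}$ is again a standard lift only checks $\xi_{2d}(v)=1$ for $v\in\Lambda^{g^d}$, not for all $v\in\Lambda^{g^{2d}}$, which is what ``standard lift of $g^{2d}$'' requires. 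Since this lies in the portion you already flag as non-rigorous, it does not affect the validity of the case-by-case verification that constitutes the actual proof.
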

\begin{proof}
This can be checked by cases using the list of cycle shapes and divisors $d$ in Lemma \ref{lem:not-nice}.
\end{proof}

\begin{proof}[Proof of Theorem \ref{th:moonshine}]
(1) Note that $\phi_{g,d}=\phi_{g,(d,\hat{n}_g)}$ for any $d|N_g$. By Lemma \ref{lem:modularity}, to complete the proof it suffices to verify that for any $d|N_g$ the number $N_{g^{(d, \hat{n}_g)}}$, or $N_{g^{(d,\hat{n}_g)}}'$ respectively, divides $N_g/d$. This is straightforward to check by cases. 

\vspace{2mm}

(2) Since every $\phi_{g,d}$ has integral Fourier expansion at infinity, we conclude from Lemma \ref{lem:rationality-2} that the inverse image $F_g$ of $(\phi_{g,d})_{d|N_g}$ under the isomorphism $\mathbb{J}$ (see Theorem \ref{th:iso}) has rational principal part as a vector-valued modular form on $U(N_g)\oplus \Lambda^g$. Therefore, there exists a positive integer $N$ such that $N F_g$ has integral principal part. It follows that $\Borch(F_g)$ is well-defined as a multi-valued $N$th root of a meromorphic modular form on $U(N_g)\oplus U\oplus \Lambda^g$. 

\vspace{2mm}

For any $d | N_g$, the Fourier expansion of $\phi_{g, d}$ begins $$\phi_{g, d}(\tau, \mathfrak{z}) = q^{-1} + \mathrm{tr}(g^d) + O(q),$$ (note that when $\xi_d \neq 1$ the  part of $\phi_{g, d}$ coming from $\Lambda^{g^d, -}$ contributes nothing to the $q^{-1}$ and $q^0$ terms), and in particular $$\phi_{g, N_g}(\tau, \mathfrak{z}) = q^{-1} + 24 + O(q).$$ Theorem \ref{th:FJ-level-N} implies that the Weyl vector of $\Borch(F_g)$ is $(1, 0, 0)$ and that the weight of $\Borch(F_g)$ is 
\begin{align*}
k &= \frac{1}{2}\sum_{d|N} \mathrm{mult}_d(0,0)\\
&= \frac{1}{2}\sum_{d|N_g} \sum_{t|d} \frac{\mu(d/t)}{d} \mathrm{tr}(g^t) \\
&= \frac{1}{2}\sum_{d|N_g} b_d = \frac{1}{2} \rank(\Lambda^g).
\end{align*}
(Here we have used the formula \eqref{eq:b_m}.) The multiplicity of $\alpha=(n,\ell,m)\in L_g'$ (recall $L_g=U\oplus \Lambda^g$) as an exponent in the product expansion of $\Borch(F_g)$ (see \eqref{eq:B1}) is given by
\begin{align*}
\mathrm{mult}(n,\ell,m)&= \sum_{bd|(n,\ell,m,N_g)} \frac{\mu(b)}{bd} \mathrm{Tr}\Big((\hat{g})^{(d,\hat{n}_g)}\big|\widetilde{E}_{(nm/(b^2d^2),\ell / bd, 1)}\Big) \\
&= \sum_{bd|(n,\ell,m,N_g)} \frac{\mu(b)}{bd} \mathrm{Tr}\Big((\hat{g})^{(d,\hat{n}_g)}\big|\widetilde{E}_{(n/bd, \ell / bd, m/bd)}\Big)\\
&= \sum_{bd|(n,\ell,m,N_g)} \frac{\mu(b)}{bd} \mathrm{Tr}\Big((\hat{g})^{(d,\hat{n}_g)}\big|\widetilde{E}_{\alpha/bd}\Big).
\end{align*}
By Lemma \ref{lem:value-E}, if $\alpha \not\in L_g$, i.e. $\ell \not\in \Lambda^g$, then $\mathrm{Tr}\big((\hat{g})^d\big|\widetilde{E}_{\alpha/bd}\big)=0$, and therefore $\mathrm{mult}(n,\ell,m)=0$. Using Remark \ref{rk:match} we find 
\begin{equation}\label{eq:mult=mult}
\sum_{bd|(\alpha,N_g)} \frac{\mu(b)}{bd} \mathrm{Tr}\Big((\hat{g})^{(d,\hat{n}_g)}\big|\widetilde{E}_{\alpha/bd}\Big) = \sum_{bd|(\alpha,\hat{n}_g)} \frac{\mu(b)}{bd} \mathrm{Tr}\Big((\hat{g})^{d}\big|\widetilde{E}_{\alpha/bd}\Big).
\end{equation}
In other words, the product expansion of the Borcherds lift  $\Borch(F_g)$ is exactly the twisted denominator function \eqref{eq:twisted-denom}. It remains to prove that $\Borch(F_g)$ is single-valued and has no poles. This is obvious when $\Lambda^g=\{0\}$, since the Weyl group is generated by the exchange of $\tau$ and $\omega$ so the additive side of the twisted denominator identity yields 
$$
\Borch(F_g)(\omega,\tau) = \eta_{g}(\tau) - \eta_g(\omega). 
$$

We treat the case $\Lambda^g\neq \{0\}$ by an argument similar to the proof of \cite[Theorem 2.1]{WW23}. We use notations in \cite[Section 2]{WW23} and set $M_g:=U(N_g)\oplus U\oplus \Lambda^g$. Recall that $N$ is a positive integer such that $NF_g$ has integral principal part. Let $v^\perp$ be a divisor of $\Borch(NF_g)=\Borch(F_g)^N$ of multiplicity $d_v\neq 0$, where $v\in M_g\otimes\RR$ with $(v,v)=1$. Fix a $\mathbb{R}$-basis $e_1,...,e_l$ of the lattice $U\oplus \Lambda^g$ for which 
$$
(e_1,e_1)=-1, \quad (e_2, e_2) = ... = (e_{l}, e_{l}) = 1, \quad l=\rank(\Lambda^g)+2.
$$ 
There exists $\sigma\in \Orth^+(M_g\otimes\RR)$ such that $\sigma(v)=e_l$. Using $\sigma$ we can map the quadratic divisor $v^{\perp}$ biholomorphically to $e_l^{\perp}$ and write 
$$
\left(\Borch(F_g)\big|_{\frac{1}{2}\rank(\Lambda^g)}\sigma\right) (z)=f(z_1,...,z_{l-1})z_l^{d_v/N} + O\big(z_l^{d_v/N +1}\big), 
$$
where  $z_i=(z,e_i)$ and $f(z_1,...,z_{l-1})\neq 0$.
Let $\mathbf{\Delta}$ denote the Laplace operator on the tube domain determined by $U(N_g)$. The additive side of the twisted denominator identity shows that $\Borch(F_g)$ is singular at the $0$-dimensional cusp related to $U(N_g)$, i.e. $\mathbf{\Delta}(\Borch(F_g))=0$. We then deduce that
$$
\mathbf{\Delta}\Big(\Borch(F_g)\big|\sigma\Big) = \Big(\mathbf{\Delta}\big(\Borch(F_g)\big)\Big)\big|\sigma = 0.
$$ 
It follows that $d_v$ is a multiple of $N$ and further $d_v/N=1$, otherwise
\begin{align*} 
\mathbf{\Delta}\Big(\Borch(F_g)\big|\sigma\Big)(z)  &= \frac{1}{2}\left( -\frac{\partial^2}{\partial z_1^2} + \sum_{i=2}^l \frac{\partial^2}{\partial z_i^2} \right) \Big[ f(z_1,...,z_{l-1}) z_l^{d_v/N} + O\big(z_l^{d_v/N+1}\big) \Big] \\ &= \frac{d_v}{2N}\Big( \frac{d_v}{N} -1 \Big) f(z_1,...,z_{l-1}) z_l^{d_v/N-2} + O\big(z_l^{d_v/N-1}\big)
\end{align*} 
would be nonzero, a contradiction. 
Therefore, $\Borch(F_g)$ is single-valued and has no poles.
\end{proof}

\begin{remark}
The vector-valued modular form corresponding to $(\phi_{g,d})_{d|N_g}$ has zero component
\begin{align*}
c_{(0,0),0}(\tau) &= \frac{1}{N} \sum_{d \m N} f_{(d,0),0}(\tau) = \frac{1}{N} \sum_{d \m N} \sum_{n\in \ZZ} \phi_{g,d}(n,0) q^n \\
&= \frac{1}{N} \sum_{d \m N}  \frac{\theta_{\xi_d,\gamma_d(0)}(\tau)}{\eta_{g^d}(\tau)} 
 = \frac{1}{N} \sum_{d \m N}  \frac{\theta_{\Lambda^{g,d}}(\tau)}{\eta_{g^d}(\tau)} \\
& = \frac{1}{N}\sum_{d|N}\varphi(N/d) \frac{\theta_{\Lambda^{g,d}}(\tau)}{\eta_{g^d}(\tau)} =
\sum_{k|N} \sum_{d|k} \frac{\mu(k/d)}{k} \frac{\theta_{\Lambda^{g,d}}(\tau)}{\eta_{g^d}(\tau)}, 
\end{align*}
where $\theta_{L}(\tau) = \sum_{v \in L} q^{v^2/2} $ is the theta function of the lattice $L$. This proves a conjecture of Scheithauer \cite[\S 8]{Sch04} on the zero component of the input.
\end{remark}

\begin{remark}
By Theorem \ref{th:FJ-level-N}, for any $g\in \mathrm{Co}_0$ the twisted denominator viewed as a Borcherds product on $U(N_g)\oplus U\oplus \Lambda^g$ has Fourier--Jacobi expansion
\begin{align*}
    \Borch(F_g)(Z)&=\eta_g(\tau)\cdot \exp\left( -\sum_{d|N_g} \Grit\big(d^{-1}\phi_{g,d}\big)(dZ)  \right) \\
    &= \eta_g(\tau) - \Theta_g(\tau,\mathfrak{z})\cdot s + O(s^2), \quad s = e^{2\pi i w}.
\end{align*}
\end{remark}

\begin{remark}\label{rk:VOA}
In the introduction, we mentioned that the input $(\phi_{g,d})_{d|N_g}$ or $F_g$ is the vector-valued character of a vertex algebra. We describe this more precisely. Let $\Lambda_g$ be the orthogonal complement of $\Lambda^g$ in $\Lambda$ and denote by $V_{\Lambda_g}^{\hat{g}}$ the orbifold vertex operator algebra of $\Lambda_g$ associated with the restriction of $g$ to $\Lambda_g$. We write 
\begin{align*}
F_g(\tau) =& \sum_{a,b\in \ZZ / N_g\ZZ} \sum_{\gamma \in (\Lambda^g)'/\Lambda^g} c_{(a,b),\gamma}(\tau) \mathfrak{e}_{(a,b)}\otimes \mathfrak{e}_\gamma\\
=& \sum_{a,b\in \ZZ / N_g\ZZ} \sum_{\gamma \in (\Lambda^g)'/\Lambda^g} f_{(a,b),\gamma}(\tau) \mathfrak{f}_{(a,b)}\otimes \mathfrak{e}_\gamma.
\end{align*}
Let $\pi_g$ denote the natural isomorphism between the discriminant groups of $\Lambda^g$ and $\Lambda_g$. The components $f_{(a,b),\gamma}(\tau)$ and $c_{(a,b),\gamma}(\tau)$ appear to be respectively related to the twisted trace function and the character of $V_{\Lambda_g}^{\hat{g}}$ associated to $\pi_g(\gamma)$ and $(a,b)$ (see \cite[Propositions 3.5, 3.6]{Mol21}). The algebra $V_{\Lambda_g}^{\hat{g}}$ is one piece of the input of the BRST construction. This supports the conjecture that the generalized Kac--Moody superalgebras obtained by twisting the fake monster algebra have natural constructions as the BRST cohomology of suitable vertex algebras. (See also Problem 3 of \cite{Bor92}.)
\end{remark}

\begin{remark}
As a Borcherds product on $U(N_g)\oplus U \oplus \Lambda^g$, the twisted denominator function $\Phi_g$ is reflective if $n_g = N_g$. There are $55$ conjugacy classes $[g]$ in $\mathrm{Co}_0$, with $54$ distinct cycle shapes, for which the level equals the order and for which $\Lambda^g \ne \{0\}$. The principal parts of the input into the Borcherds lift are attached as ancillary material and are easily checked to be reflective by cases. 

\vspace{2mm}

If $n_g\neq N_g$ then $\Phi_g$ does not seem to be reflective on $U(N_g)\oplus U\oplus \Lambda^g$. 
There are $17$ conjugacy classes $[g]$ in $\mathrm{Co}_0$, with $16$ distinct cycle shapes, for which the level is greater than the order and for which $\Lambda^g \ne \{0\}$. We have verified that the product is not reflective for the cycle shapes $2^{12}$, $3^8$, $2^4 4^4$, $2^4 4^{-4} 8^4$, $2^2 4^{-1} 8^{-1} 16^2$, $2^3 6^3$, $4^6$, $6^4$, $2^2 10^2$ and $3^1 21^1$. The principal parts of the input into the ten cycle shapes were also computed in the ancillary material, and all contain a non-reflective term of the form $q^{-1} \mathfrak{e}_{(a, b)}\otimes \mathfrak{e}_{\gamma}$ with $b\neq 0 \m N_g$.  

\vspace{2mm}

By \cite[Theorem 1.4]{WW23}, the products $\Phi_g$ may be viewed as reflective modular forms on certain lattices contained in $(U(N_g) \oplus U \oplus \Lambda^g)\otimes\QQ$. Scheithauer \cite[\S 6]{Sch15} constructed such lattices for $g$ of cycle shapes $3^8$, $2^{12}$ and $3^1 21^1$. Scheithauer informed us that he proved the moonshine conjecture for
Conway's group for all elements $g$ with $n_g = N_g$ in \cite{Sch22+} by explicitly describing the vector-valued modular form on $U(N_g) \oplus U\oplus \Lambda^g$ which lifts to the twisted denominator identity of $g$.
\end{remark}

\bigskip
\noindent
\textbf{Acknowledgements} 
H. Wang is supported by the Institute for Basic Science (IBS-R003-D1). H. Wang thanks Valery Gritsenko for stimulating conversations, and thanks Nils Scheithauer for answering questions and valuable discussions. The authors thank Nils Scheithauer for helpful comments on an earlier version of this paper.

\bibliographystyle{plainnat}
\bibliofont
\bibliography{refs}

\end{document}